\documentclass[a4paper,10pt]{article}
\usepackage[utf8]{inputenc}
\usepackage{amsmath, amsfonts, amsthm,amssymb,  bbm, dsfont, lmodern}
\usepackage{math}
\usepackage{hyperref, geometry,mathtools} 
\mathtoolsset{showonlyrefs}

\usepackage[usenames, dvipsnames]{color}
%\usepackage[left=2.3cm,right=2.3cm,top=2.5cm,bottom=2.5cm]{geometry}
%%%%%%%%%%%%%%%%%%%%%%%%%%%%%%%%%%%%%%%%%%%%%%%%%%%%%%%%%%%%%%%%%
%%%%%%%%%%%%%%%%%%%%%%%%%%%%%%%%%%%%%%%%%%%%%%%%%%%%%%%%%%%%%%%%%
%%%%%%    Enviroments
%%%%%%%%%%%%%%%%%%%%%%%%%%%%%%%%%%%%%%%%%%%%%%%%%%%%%%%%%%%%%%%%
%%%%%%%%%%%%%%%%%%%%%%%%%%%%%%%%%%%%%%%%%%%%%%%%%%%%%%%%%%%%%%%%%
\newtheorem{lemma}{Lemma}[section]
\newtheorem{theorem}[lemma]{Theorem}
\newtheorem{proposition}[lemma]{Proposition}
\newtheorem{corollary}[lemma]{Corollary}
\newtheorem{definition}[lemma]{Definition}
\theoremstyle{remark}
\newtheorem{remark}[lemma]{Remark}
\newtheorem{example}[lemma]{Example}

 \usepackage{mathrsfs}

%%%%%%%%%%%%
%%%%%%%%%%

%%%%%%%%%%%%
%%%%%%%%%%
\def\beq{\begin{equation}}   \def\eeq{\end{equation}}
\def\bea{\begin{eqnarray}}  \def\eea{\end{eqnarray}}
\newcommand{\lan}{\langle}
\newcommand{\ran}{\rangle}
\newcommand{\pa}{\partial}
\newcommand{\hos}{{\rm har}}
\newcommand{\per}{{\rm per}}

\renewcommand{\bar}{\overline}

\renewcommand{\uno}{{\rm Id}}
\renewcommand{\Re}{{\rm Re }}
\renewcommand{\Im}{{\rm Im }}

\renewcommand{\fA}{\mathsf{A}}
\renewcommand{\fB}{\mathsf{B}}
\renewcommand{\fC}{\mathsf{C}}
\renewcommand{\fH}{\mathsf{H}}
\renewcommand{\fK}{\mathsf{K}}
\renewcommand{\fP}{\mathsf{P}}
\renewcommand{\fQ}{\mathsf{Q}}

\renewcommand{\fV}{\mathsf{V}}

\newcommand{\Op}[1]{{\rm Op}\left(#1\right)}

\title{Growth of Sobolev norms in linear Schr\"odinger equations \\ as a dispersive phenomenon}

\author{
A. Maspero\footnote{ International School for Advanced Studies (SISSA), Via Bonomea 265, 34136, Trieste, Italy \newline
 \textit{Email: } \texttt{alberto.maspero@sissa.it}}
}

\numberwithin{equation}{section}

\begin{document}
\maketitle 

\begin{abstract}
In this paper we consider linear, time dependent Schr\"odinger equations of the form $\im \pa_t \psi = K_0 \psi + V(t) \psi$, where $K_0$ is a  strictly positive selfadjoint operator with discrete spectrum and constant spectral gaps, and $V(t)$  a time periodic potential.
We give sufficient conditions on  $V(t)$ ensuring that   $K_0+V (t)$ generates unbounded orbits.
The main condition is that the resonant average of $V(t)$, namely the average with respect to the flow of $K_0$, has a nonempty absolutely continuous spectrum and fulfills a Mourre estimate.
These conditions are  stable under perturbations. 
The proof combines pseudodifferential normal form with dispersive estimates in the form of local energy decay.\\
 We apply our abstract construction to  the Harmonic oscillator on $\R$ and to  the half-wave equation on $\T$; in each case, we provide large classes of potentials which are transporters.
\end{abstract}

\section{Introduction}

%Weak turbulence is usually referred to as the  phenomenon of energy cascade  between characteristically different scales of solutions of dispersive PDEs. 
%A convenient way to capture this phenomenon  in a quantitative way is to look for solutions whose  high Sobolev norms grow in time indefinetively (eventually only along a subsequence of times), producing unbounded orbits. 

We consider  the abstract linear Schr\"odinger equation 
\begin{equation}
\label{eq1}
\im \pa_t \psi = K_0 \psi + V(t) \psi \ 
\end{equation}
on a scale of Hilbert spaces $\cH^r$; here  $V(t)$ is a time $2\pi$-periodic potential  and $K_0$  a selfadjoint, strictly positive operator with compact resolvent,  pure point spectrum and constant spectral gaps.
We prove some abstract results ensuring, 
 $\forall r >0$, the existence of   solutions $\psi(t)$ 
whose $\cH^r$-norms 
 grow polynomially fast,
$$
\norm{\psi(t)}_r \geq C_r \, \la t \ra^r , \quad \forall t \gg 1  \  ,
$$
 whereas their $\cH^0$-norms are 
constant  for all times, $\norm{\psi(t)}_0 = \norm{\psi(0)}_0$ $\forall t$.
Here   $\la t \ra:= \sqrt{1+t^2}$.\\
 These solutions therefore  exhibit
weak turbulent behavior in the form of 
energy cascade towards high frequencies. \\
We apply our abstract result to two models:  the Harmonic oscillator on $\R$ and  the half-wave equation on $\T$. In both cases we exhibit large classes of potentials $V(t)$, bounded, smooth and periodic in time, so  that the Hamiltonian $K_0 + V(t)$ generates unbounded orbits.

The phenomenon is purely perturbative: for $V = 0$ each norm of each  solution is constant for all times. 
So the central question is the existence of  potentials  able  to  transport energy to high-frequencies; we formalize this notion in the following definition:
\begin{definition}
\label{def:transporter}
 We shall say that  $V(t)$ is  a {\em transporter} if $\forall  r >0$ there exists  a  solution $\psi(t) \in \cH^r$ of \eqref{eq1} with  unbounded growth of  norm, i.e. 
\begin{equation}
\label{growth}
\limsup_{t \to \infty} \norm{\psi(t)}_r = \infty . 
\end{equation}
If this happens for  {\rm every} nonzero   solution,  we shall say that  $V(t)$ is a {\em universal transporter}.
\end{definition}
Starting with the pioneering work of Bourgain \cite{bou99}, in the last  few years there has been some efforts to construct both transporters \cite{del, FaouRaph, Thomann} 
and universal transporters \cite{BGMR1, Mas19}  for different types of  Schr\"odinger equations.
All these papers provide explicit examples of potentials, constructed ad hoc for the problem at hand.

The novelty of our  result is that we identify 
 sufficient, explicit and robust (i.e. stable under perturbations)  conditions ensuring  $V(t)$ to be  a transporter. Precisely, 
its {\em resonant average}
  \begin{align}
\label{res.av}
&\la V \ra :=  \frac{1}{2\pi} \int_0^{2\pi} e^{\im s K_0} \, V(s) \, e^{- \im s K_0} \, \di s \ 
\end{align}
must have nontrivial absolutely continuous spectrum in an interval, and over this interval it has to fulfill  a Mourre estimate   -- see \eqref{mourre.ab} below (actually we also require that both $K_0$ and $V(t)$ belong to some abstract graded algebra of pseudodifferential operators, as in \cite{BGMR2}).

The crucial point is that these conditions 
 imply dispersive estimates for $\la V \ra$ of the form
\begin{equation}
\label{dispersive}
\norm{K_0^{-k} e^{-\im t \la V \ra } P_c \phi}_0 \lesssim \la t \ra^{-k} \norm{K_0^k \phi}_0 \ , \quad \forall t \in \R , 
\end{equation}
 where $P_c$ is a    projection  on a subset of the absolutely continuous spectral space of $\la V\ra$. 
% In particular, the Schr\"odinger   flow of $\la V\ra$ forces  energy to leave  any compact set in the frequency space  and  flow towards infinity, provoking energy cascade.
%This is the same phenomenon which happens  typically for dispersive equations on the euclidean space, where mass leaves any compact subset of the physical variable.
 A consequence of \eqref{dispersive} is  that we obtain solutions of $\im \pa_t \phi = \la V \ra \phi$   with decaying negative Sobolev norms and so,  by duality,   growing positive Sobolev norms.\\
The fact that Mourre estimates imply dispersive estimates as above  has origin  from the work of Sigal-Soffer in quantum scattering theory  \cite{SS} and it has been extended by many authors (see e.g.  \cite{Ski, GerSig, JenMouPer,HunSigSof, GNRS, Arbunich}). 
See also the recent results \cite{CdVS-R, CdV20, DyatlovZworski}
where similar dispersive properties are studied for pseudodifferential operators of order 0 on compact manifolds of dimension greater equal $2$.

To explain the connection between the dynamics of \eqref{eq1} and the dispersive properties of the flow of $\la V \ra$, let us briefly  describe the main ideas of the proof. 
The first step is to put system \eqref{eq1} into its resonant pseudodifferential normal form. 
This is the resonant variant of the normal form developed in \cite{BGMR2} for non-resonant systems (and essentially an abstract version of the normal form of Delort \cite{del}); it allows, $\forall N \in \N$,  to conjugate equation \eqref{eq1} to 
\begin{equation}
\label{eq.2}
\im \pa_t \vf = \big(K_0 + Z^{(N)}(t) + R^{(N)}(t)  \big) \vf
\end{equation}
where $Z^{(N)}(t)$ is a time dependent operator fulfilling 
\begin{equation}
\label{Z.prop}
\im \pa_t Z^{(N)}(t) = [K_0, Z^{(N)}(t)] , \qquad Z^{(N)}(0) = \la V \ra + \mbox{lower order terms}
\end{equation}
whereas $R^{(N)}(t)$ is an $N$-smoothing operator (it maps $\cH^{r} \to \cH^{r+N}$ continuously  $\forall r$). 
The difference with the non-resonant case of  \cite{BGMR2} is that, in that paper,  $Z^{(N)}(t)$ commutes with $K_0$. 
This is not anymore true in the resonant case we deal with;
 however \eqref{Z.prop} implies that 
$e^{\im t K_0} Z^{(N)}(t) e^{- \im t K_0}$ is time independent and thus  coincides with $ Z^{(N)}(0)$. 
Thus, conjugating \eqref{eq.2} with $e^{- \im t K_0}$, we arrive at the equation 
\begin{equation}
\label{eq.3}
\im \pa_t \phi = \big(\la V \ra + T + R(t) \big) \phi
\end{equation}
where $T:= Z^{(N)}(0)- \la V \ra$ is a time independent selfadjoint compact operator   and $R(t)$ is  $N$-smoothing. 

Then we  analyze the dynamics of the truncated equation
\begin{equation}
\label{intro.tr}
\im \pa_t \phi = \big( \la V \ra + T  \big) \phi
\end{equation}
and prove that it has  solutions with decaying negative  Sobolev norms and so, by duality, growing positive Sobolev norms.
 This is the core of the proof; after this step, it is not difficult to construct a solution of the complete equation \eqref{eq.3} exhibiting energy cascade, exploiting that $R(t)$ is  regularizing.
So let us concentrate on \eqref{intro.tr}. 
The goal is to prove a dispersive estimate of the form \eqref{dispersive} with 
 $\la V\ra$ replaced by  $\la V \ra + T$. 
However 
the point is delicate because the absolutely continuous spectrum of $\la V\ra$ (which exists by assumption) could be completely destroyed by adding  $T$; a celebrated theorem by Weyl-von Neumann ensures that  {\em any} selfadjoint operator  (in a separable Hilbert space) can be perturbed by a (arbitrary small) compact  selfadjoint operator so that its spectrum becomes  pure point  (see e.g. \cite[pag. 525]{Kato}). 
This is exactly the situation we want to avoid, as pure point spectrum prevents 
dispersive estimates. 
To get around this,   we exploit that Mourre estimates are stable under  pseudodifferential perturbations.
This  allows us to prove that  $\la V \ra + T$ fulfills Mourre estimates  and thus
%, via minimal velocity estimates,   
a dispersive estimate as \eqref{dispersive}.

We also stress  that fulfilling a Mourre estimate seems to be  a  quite general  condition, and  in the applications we  exhibit large classes of operators which are transporters. 
 For example, for the  half wave equation we  prove that any operator of the form
 $\cos(mt) v(x)$ with $v \in C^\infty(\T, \R)$ and  $m \in \Z$ is a transporter provided the $m$-th Fourier coefficient of $v(x)$ is not zero.
% We believe that the results in the applications could be improved
% by using the methods in \cite{CdVS-R, CdV20} which describe  dynamical assumptions on a 
% classical symbol that ensure  its quantization 
%fulfills  Mourre estimates; we did not try to push the analysis in this direction yet.

Finally,  the conditions we identity 
to be transporters are robust: if a potential $V(t)$ fulfills them, so does $V(t) + W(t)$ for {\em any} sufficiently small pseudodifferential operator $W(t)$. 
This shows that  weakly turbulent phenomena induced by certain transporters  are stable under perturbations.
Up to our knowledge, this ``stability of instability'' is new in the literature and we consider it one of the main novelty of the paper.\\

We conclude the introduction by reviewing the known results about existence of transporters for  linear time dependent Schr\"odinger equations. 
As we already mentioned, the first result is due to Bourgain \cite{bou99}, who constructed a transporter for the Schr\"odinger equation on the torus; in this case $V(t,x)$ is a bounded real analytic function.
Delort \cite{del} constructs a transporter for the harmonic oscillator on $\R$, which is a time $2\pi$-periodic pseudodifferential operator of order zero.
In \cite{BGMR1} we proved that  $a x \sin( t)$, $a >0$,  is a universal transporter for the harmonic oscillator on $\R$; in this case the potential is an unbounded operator.
In \cite{Mas19} we constructed universal transporters for the abstract equation \eqref{eq1}, and applied the result to the harmonic oscillator on $\R$, the half-wave equation on $\T$ and on a Zoll manifold; in all cases the universal transporters are time periodic pseudodifferential operators of order 0.
Finally recently Faou-Raphael \cite{FaouRaph} 
 constructed a transporter for the harmonic oscillator on $	\R$ which is a time dependent function (and not a pseudodifferential operator), and Thomann \cite{Thomann} has constructed a transporter for the harmonic oscillator on  the Bargman-Fock space.
 Finally we recall the long-time growth result \cite{HausMaspero} for the  semiclassical 
anharmonic oscillator on $\R^d$.

\vspace{1em}

\noindent{\bf Acknowledgments:} We  thank Matteo Gallone for
helpful discussions on spectral theory and  Dario Bambusi and  Didier Robert for useful suggestions during the preparation of this work.

\section{The abstract result}
\label{sec:alg}
We start with a Hilbert space $\cH$, endowed  with the scalar product $\la \cdot, \cdot \ra$,   and a reference operator $K_0$,
which we assume to be  selfadjoint,   positive, namely such that
$$
\langle \psi; K_0\psi\rangle\geq c_{K_0} \norm{\psi}^2\ ,\quad \forall
\psi\in D(K_0^{1/2})\ ,\quad c_{K_0}>0\ , 
$$ and 
with compact resolvent. \\
We define as usual a scale of Hilbert spaces by $\cH^r:=D(K_0^r)$
(the domain of the operator $K_0^r$) if $r\geq 0$, and
$\cH^{r}=(\cH^{-r})^\prime$ (the dual space) if $ r<0$.  Finally we
denote by $\cH^{-\infty} = \bigcup_{r\in\R}\cH^r$ and $\cH^{+\infty} =
\bigcap_{r\in\R}\cH^r$.  We endow $\cH^r$ with the natural norm
$\norm{\psi}_r:= \norm{(K_0)^r \psi}_{0}$, where $\norm{\cdot}_0$ is
the norm of $\cH^0 \equiv \cH$. Notice that for any $m\in\R$,
$\cH^{+\infty}$ is a dense linear subspace of $\cH^m$ (this is a
consequence of the spectral decomposition of $K_0$).

\begin{remark}
\label{rem:K0.flow}
By the very definition of $\cH^r$, the unperturbed flow $e^{-\im t K_0}$ preserves each norm, $\norm{e^{-\im t K_0} \psi }_r = \norm{\psi}_r$ $\, \forall t \in \R$. 
Consequently, every orbit of  equation \eqref{eq1} with $V(t) = 0$  is bounded.
\end{remark}

Following \cite{BGMR2},  we introduce now a graded algebra $\cA$ of operators which mimic some
fundamental properties of different classes of pseudodifferential
operators.  For $m\in\R$ let $\cA_m$ be a linear subspace of $
\bigcap_{s\in\R}\cL(\cH^s,\cH^{s-m})$ and define
$\cA:=\bigcup_{m\in\R}\cA_m$.  We notice that the space
$\bigcap_{s\in\R}\cL(\cH^s,\cH^{s-m})$ is a Fr\'echet space equipped
with the semi-norms: $\Vert A\Vert_{m,s} := \Vert
A\Vert_{\cL(\cH^s,\cH^{s-m})}$.

We shall need  to control the smoothing properties of the operators
in the scale $\{\cH^r\}_{r\in\R}$.  If $A\in\cA_m$ then $A$ is more
and more smoothing if $m\rightarrow -\infty$ and the opposite as
$m\rightarrow +\infty$. We will say that $A$ is of {\em order $m$} if
$A \in \cA_m$.
    \begin{definition}
    \label{smoothing}
    %   Let $S\colon \cH^{+\infty} \to \cH^{-\infty}$.
     We say  that $S\in\cL(\cH^{+\infty},\cH^{-\infty} )$ is $N$-smoothing if  $\forall \kappa \in \R$, it  can be extended  to an operator in   $\cL(\cH^{\kappa}, \cH^{\kappa+N})$.  When this is true  for  every $N\geq 0$, we say that $S$  is a smoothing  operator.
    \end{definition}
    The first set of assumptions concerns the properties of  $\cA_m$:\\
    
     \noindent
     {\bf Assumption I: Pseudodifferential algebra}  
     \begin{itemize}
      \item[(i)] For each $m\in \R$, $K_0^m\in\cA_m$; in particular
        $K_0$ is an operator of order one.
\item[(ii)]  For each $m\in\R$, $\cA_m$ is a Fr\'echet space for a family of  filtering semi-norms $\{ \wp^m_j \}_{j\geq 0}$  such that the embedding $\cA_m\hookrightarrow  \bigcap_{s \in \R} \cL(\cH^s,\cH^{s-m})$ is continuous\footnote{A family of seminorms $\{ \wp^m_j \}_{j\geq 0}$  is called filtering if for any $j_1, j_2 \geq 0$  there exist  $k \geq 0$ and $c_1, c_2 >0$  such that the two inequalities $\wp^m_{j_1}(A) \leq c_1 \wp^m_{k}(A)$ and $\wp^m_{j_2}(A) \leq c_2 \wp^m_{k}(A)$ hold for any $A \in \cA_m$.}.\\
 If $m^\prime \leq m$ then $\cA_{m^\prime}\subseteq\cA_m$ with a continuous embedding.

    \item[(iii)]    $\cA$ is a graded algebra,  i.e. $\forall m,n\in \R$:  if $A\in \cA_m$   and $B\in\cA_n$ then $A B\in\cA_{m+n}$ 
     and the map $(A,B)\mapsto AB$ is continuous from 
     $\cA_{m}\times\cA_{n}$ into $\cA_{m+n}$.
     \item[(iv)] $\cA$ is a graded Lie-algebra\footnote{This property will impose the choice of the semi-norms  $\{ \wp^m_j \}_{j\geq 1}$. We will see in the examples that the natural  choice $(\Vert \cdot\Vert_{m,s})_{s\geq 0}$ has to be refined.  } : if $A\in \cA_m$   and $B\in\cA_n$ then the commutator $[A,B]\in\cA_{m+n-1}$  and the map $(A,B)\mapsto [A,B]$
      is continuous from $\cA_{m}\times\cA_{n}$ into $\cA_{m+n-1}$.
      
     \item[(v)] $\cA$ is closed under  perturbation by smoothing operators in the following sense:
     let $A$ be a linear map: $\cH^{+\infty}\rightarrow\cH^{-\infty}$. If   there  exists $m\in\R$ such that  for  every $N>0$  we  have  a   decomposition        $A= A^{(N)}+ S^{(N)}$,  with $A^{(N)}  \in\cA_m$  and $S^{(N)}$  is  $N$-smoothing,  then
      $A\in\cA_m$. 
       \item[(vi)] If $A \in \cA_m$ then also the adjoint operator $A^* \in \cA_m$.
  The  duality here is defined by the scalar product 
      $\lan\cdot, \cdot\ran$ of $\cH=\cH^0$. The adjoint $A^*$ is defined by 
      $\lan u, Av\ran = \lan A^*u, v\ran$  for $u,v\in\cH^\infty$  and extended by continuity.  
     \end{itemize}
    
    It is well known that classes of pseudodifferential operators
    satisfy these properties, provided one chooses for $K_0$ a
    suitable operator of the right order (see e.g. \cite{ho}). 
    %In
%     \cite{guil} V. Guillemin has introduced abstract
%     pseudo-differential algebras, called generalized Weyl
%     algebras. For his purpose \cite{guil} needs different properties
%     than ours, but obviously there is an overlap with our presentation.
\begin{remark}
\label{rem:control}
One has that $ \forall A\in\cA_m$, $\forall B\in\cA_n$
\begin{align}
\label{est.1}
\forall m, s  \quad \exists N \ s.t.\  &\norm{A}_{m,s} \leq C_1 \,
\wp^m_N(A)  \ ,
\\
\label{est.2}
\forall m, n, j \quad \exists N \ s.t.\  &\wp^{m+n}_j(AB) \leq C_2 \, \wp^m_N(A)
\, \wp^n_N(B)  \ , 
\\
\label{est.3}
\forall m, n, j\quad  \exists N \ s.t.\   &\wp^{m+n-1}_j( [A, B] ) \leq C_3 \, \wp^m_N(A) \, \wp^n_N(B)  \ , 
\end{align}
for some positive
constants $C_1(s,m)$, $C_2(m,n,j)$, $C_3(m,n,j)$.
\end{remark}

\begin{remark}
\label{rem:compact}
Any $A \in \cA_{m}$ with $m <0$ is a compact operator on $\cH$. \\
Indeed write  $A = A K_0^{-m} \, K_0^{m}$. Then  $ A K_0^{-m} \in \cA_0$ is a bounded operator on $\cH$ (Assumption I (i)--(iii)), whereas  $K_0^{m} \equiv  (K_0^{-1})^{-m}$ is compact on $\cH$, as  $K_0^{-1}$ is a compact operator by assumption.
\end{remark}

For $\Omega\subseteq \R^d$ and $\cF$ a Fr\'echet space, 
we will denote by $C_b^m(\Omega, \cF)$ the space of
$C^m$ maps $f: \Omega\ni x\mapsto f(x)\in\cF$ such that,  for every
seminorm $\norm{\cdot}_j$ of $\cF$, one has
       \begin{equation}
       \label{star}
       \sup_{x\in\Omega}\Vert\partial_x^\alpha f(x)\Vert_{j} < +\infty
       \ , \quad \forall \alpha\in \N^d\  :\ \left|\alpha\right|\leq m  \ .
       \end{equation}
If \eqref{star} is true
$\forall m$, we say $f \in C^\infty_b(\Omega, \cF)$. 
Similarly we denote by $C^\infty(\T, \cF)$ the space of smooth maps from the torus $\T = \R/(2\pi\Z)$ to the Fr\'echet space $\cF$. \\

The second set of assumptions  concerns the operator $K_0$,  its spectral structure and an  Egorov-like property, also well
known for pseudo-differential operators.  \\

     \noindent
       {\bf Assumption II: Properties of $K_0$}
       \begin{itemize}
       \item[(i)] The operator $K_0$ has purely discrete spectrum fulfilling       
         \begin{equation}
 \label{AssA}
 {\rm spec}(K_0) \subseteq \N +\lambda
 \end{equation}   for some  $\lambda \geq 0$.
 \item[(ii)] For any $m \in \R$ and $A\in\cA_m$,   the map defined on $\R$ by 
         $\tau\mapsto A(\tau):={\rm e}^{\im \tau K_0}\, A \, {\rm e}^{-\im \tau
         K_0}$ belongs to $ C^\infty_b(\R, \cA_m)$ and one has
  \begin{equation}
  \label{est.4}
  \forall  j \quad \exists N \ s.t.\  \ \sup_{\tau \in \R}  \wp^{m}_j(A(\tau)) \leq C_4 \, \wp^m_N(A) 
  \end{equation}
  for some positive constant $C_4(m,j)$.
\end{itemize}
\vspace{1.5em}

\begin{remark}
\label{K0.flow.p}
Assumption {\rm II} (i)  guarantees that
$e^{\im 2\pi K_0} = e^{\im 2 \pi \lambda}$.
As a consequence,  for any operator $V$, the map  $\tau \mapsto e^{\im \tau K_0} V e^{-\im \tau K_0}$ is $2\pi$-periodic.
\end{remark}

%       \begin{remark}
%  \label{ex.ii}
%From Assumption II one has that, for any $B\in\cA_{n}$, for any
%$\ell\in\N$, ${\rm ad}_{A(s)}^\ell(B)\in C_b^0(]-T, T[,
%    \cA_{n+(m-1)\ell})$, $\forall T>0$. Here  ${\rm ad}_{A}(B):=\im [A,B]$.
%\end{remark}

\vspace{1em}
We denote by $C^\infty_c(\R^d, \R_{\geq 0})$ the set of smooth  functions with compact support from $\R^d$ to $\R_{\geq 0}$ (hence non-negative). 
Furthermore from now on, given  two operators $\fA, \fB \in \cL(\cH)$, we write $\fA \leq \fB$  with the meaning   $\la \fA \vf, \vf \ra \leq \la \fB \vf, \vf \ra$ $\, \forall \vf \in \cH$.\\

The last  set of assumptions concerns  the resonant average $\la V\ra$   of the potential $V(t)$ (see  \eqref{res.av}) and its spectrum $\sigma(\la V \ra)$.
 Note that if $V(t)$ is selfadjoint $\forall t$, so is $\la V \ra$. \\
 
    \noindent
       {\bf Assumption III: Properties of the potential $V(t)$} \\ 
      The operator   $V \in C^\infty(\T, \cA_{0})$, $V(t)$  selfadjoint $\forall t$, and  its resonant average $ \la V \ra$ fulfills:
      \begin{itemize}
      \item[(i)]  There exists  an interval  $I_0\subset \R$  such that  $\abs{\sigma(\la V \ra)\cap I_0} >0 $; here $\abs{\cdot}$ denotes the Lebesgue measure.
      \item[(ii)]  {\em Mourre estimate} over  $I_0$:  there exist a  selfadjoint  operator $A \in \cA_{1}$ and a function   $g_{I_0} \in C_c^\infty(\R, \R_{\geq 0})$ with $g_{I_0}\equiv 1$ on $I_0$ such that 
\begin{equation}
\label{mourre.ab}
g_{I_0}(\la V \ra) \, \im [\la V \ra, A] \, g_{I_0}(\la V \ra ) \geq \theta \, g_{I_0}(\la V \ra)^2  + K
\end{equation}
for some $\theta >0$ and $K$ a selfadjoint compact operator.
      \end{itemize}
The operator $g_{I_0}(\la V \ra)$ above is   defined via functional calculus, see Appendix \ref{app:fc}. \\
Following the literature, we  shall say that $\la V \ra$ is {\em conjugated to} $A$ {\em over} $I_0$.
 
 \begin{remark}
By Mourre theory \cite{Mourre}
 $\la V \ra$ has,  in the interval $I_0$, a  nontrivial absolutely continuous spectrum  with finitely many eigenvalues of finite multiplicity and no singular continuous spectrum. In  general one cannot exclude the existence of  embedded eigenvalues in the absolutely continuous  spectrum.\footnote{ For 
 example consider $H \in \cL(L^2(\T))$ given by
 $$
 (H u)(x) :=  \cos(x)  u(x)  + \delta (1 - \delta^{-1} \cos(x))\frac{1}{2\pi} \int_\T  u(x) \big( 1 -\delta^{-1}  \cos(x)\big) \, \di x \ , \quad
 \delta \in \left(-\frac12, \frac12\right)\setminus \{0\} \ . 
 $$
$H$ is selfadjoint,  a 1-rank perturbation of the multiplication operator by $\cos(x)$, it has absolutely continuous spectrum in the interval $(-1,1)$,   and $\delta$ is an embedded  eigenvalue with eigenvector 
$u(x) \equiv 1$.
Moreover $H$  is  conjugated to $\sin(x)\frac{\pa_x}{\im} + \frac{\pa_x}{\im} \sin(x) $ over $[-\frac12, \frac12]$.}
 \end{remark}

We are ready to state our main results. The first  says that, under the set of assumptions above,  $V(t)$ is a transporter  in the sense of Definition \ref{def:transporter}:
\begin{theorem}
\label{thm:ab0}
Assume that $\cA$ is a graded algebra as in {\rm Assumption {\rm I}}, and that $K_0 $ and $V(t) \in C^\infty(\T, \cA_0)$ satisfy {\rm Assumptions} {\rm II} and {\rm III}.  Then $V(t)$ is a transporter for the equation 
\begin{equation}
\label{eq.ab0}
\im \pa_t \psi = (K_0 + V(t)) \psi  \ .
\end{equation}
More precisely, 
for any  $r >0$  there exist a solution $\psi(t)$ of \eqref{eq.ab0} in $\cH^r$ and constants $C , T  >0$ such that 
\begin{equation}
\label{eq:gr0}
\norm{\psi(t)}_{r} \geq C \la t \ra^{r} , \quad \forall t \geq  T \ .
\end{equation}
\end{theorem}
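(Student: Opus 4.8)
The plan is to carry out the three-step scheme sketched in the introduction: a resonant pseudodifferential normal form bringing \eqref{eq.ab0} to the interaction-picture equation \eqref{eq.3}; a dispersive (local energy decay) estimate for the truncated model \eqref{intro.tr}, obtained by transporting the Mourre estimate from $\la V\ra$ to $\la V\ra+T$; and a soft perturbative argument reinstating the smoothing remainder.

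Fix $r>0$ and let $N$ be a large integer, to be chosen at the end in terms of $r$. Using Assumption~I (graded Lie algebra, closure under smoothing perturbations) and the Egorov property of Assumption~II(ii), one constructs iteratively a $2\pi$-periodic family $\Phi^{(N)}(t)=e^{\im\chi_1(t)}\cdots e^{\im\chi_N(t)}$, with $\chi_j(t)=\chi_j(t)^*\in\cA_{1-j}$, which is unitary on $\cH^0$ and bounded with bounded inverse on every $\cH^s$, uniformly in $t$; at the $j$-th step the non-resonant part of the current perturbation (its Fourier modes along the flow of $K_0$, all nonzero by Remark~\ref{K0.flow.p}) is removed by solving a homological equation $\im\dot\chi_j+[\chi_j,K_0]=\dots$, solvable precisely because that part has vanishing $K_0$-average, and each step lowers the order of the remainder by one. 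This conjugates \eqref{eq.ab0} to \eqref{eq.2}, with $Z^{(N)}$ obeying \eqref{Z.prop} and $R^{(N)}(t)$ $N$-smoothing. Since $e^{\im 2\pi K_0}$ is a scalar (Assumption~II(i), Remark~\ref{K0.flow.p}), $e^{-\im tK_0}$ is an isometry of every $\cH^s$ (Remark~\ref{rem:K0.flow}); conjugating \eqref{eq.2} by it and using $e^{\im tK_0}Z^{(N)}(t)e^{-\im tK_0}\equiv Z^{(N)}(0)=:H_0=\la V\ra+T$, with $T\in\cA_{-1}$ selfadjoint, yields \eqref{eq.3}, with $R(t)=e^{\im tK_0}R^{(N)}(t)e^{-\im tK_0}$ selfadjoint and $N$-smoothing. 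It then suffices to produce a solution $\phi(t)\in\cH^r$ of \eqref{eq.3} with $\norm{\phi(t)}_r\gtrsim\la t\ra^r$: applying $\Phi^{(N)}(t)e^{-\im tK_0}$, whose two factors are uniformly bounded with bounded inverse, respectively isometric, on $\cH^r$, returns a solution of \eqref{eq.ab0} with the same growth, i.e.\ \eqref{eq:gr0}.

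The heart of the proof is the truncated model \eqref{intro.tr}, $\im\pa_t\phi=H_0\phi$. Here lies the main obstacle: by Weyl--von Neumann a \emph{generic} compact perturbation of $\la V\ra$ could destroy its absolutely continuous spectrum, so it is essential that $T$ is not generic but belongs to $\cA_{-1}$. Indeed, by the graded Lie structure (Assumption~I(iv) with the conjugate operator $A\in\cA_1$) the operators $T$, $[T,A]$, $[[T,A],A],\dots$ are all compact, hence $H_0$ is of class $C^2(A)$, and by the standard stability of the Mourre estimate under such compact perturbations (the difference being absorbed into the compact error $K$ of \eqref{mourre.ab}, via Helffer--Sj\"ostrand), $H_0$ satisfies a Mourre estimate with respect to $A$ over a (possibly slightly smaller) interval $I_0'\subseteq I_0$. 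By Mourre theory $H_0$ then has, in $I_0'$, purely absolutely continuous spectrum apart from finitely many eigenvalues; pick a closed $J\subset I_0'$ disjoint from them with $\abs{\sigma(H_0)\cap J}>0$ and set $P_c:=\mathbbm{1}_J(H_0)\neq 0$, a nonzero spectral projection onto the a.c.\ subspace. Feeding the iterated-commutator bounds into the Sigal--Soffer--Mourre machinery yields, for every $k\ge 0$,
\begin{equation}
\label{disp.H0}
\norm{K_0^{-k}e^{-\im tH_0}P_c\psi}_0\lesssim\la t\ra^{-k}\norm{K_0^k\psi}_0 ,\qquad t\in\R .
\end{equation}
Choosing $\phi_0\in\cH^{+\infty}$ with $P_c\phi_0\neq 0$, the function $\phi_{\rm tr}(t):=e^{-\im tH_0}P_c\phi_0$ solves \eqref{intro.tr}, has constant $\cH^0$-norm, and $\norm{\phi_{\rm tr}(t)}_{-r}\lesssim\la t\ra^{-r}$; with the elementary interpolation $\norm{\psi}_0^2=\la K_0^{-r}\psi,K_0^r\psi\ra\le\norm{\psi}_{-r}\norm{\psi}_r$ this already gives $\norm{\phi_{\rm tr}(t)}_r\gtrsim\la t\ra^r$.

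Finally one reinstates $R(t)$. A commutator computation — differentiate $\norm{K_0^s\phi}_0^2$, use that $[K_0^s,H_0+R(t)]$ is of order $\le s-1$ uniformly in $t$ and that $\norm{\phi(t)}_0$ is conserved — yields an a priori polynomial bound $\norm{U(0,\sigma)}_{\cL(\cH^s)}\lesssim\la\sigma\ra^{c_s}$ for the propagator $U(t,s)$ of $H_0+R(t)$ (and likewise for $U(\sigma,0)$), with $c_s$ depending only on $s$. Choosing $N>c_r+r+1$ and setting $\phi(t):=U(t,0)W_+P_c\phi_0$ with $W_+P_c\phi_0:=\lim_{T\to\infty}U(0,T)e^{-\im TH_0}P_c\phi_0$, the identity $\tfrac{d}{d\sigma}\big(U(0,\sigma)e^{-\im\sigma H_0}\big)=\im\,U(0,\sigma)R(\sigma)e^{-\im\sigma H_0}$ together with $\norm{R(\sigma)e^{-\im\sigma H_0}P_c\phi_0}_s\lesssim\norm{e^{-\im\sigma H_0}P_c\phi_0}_{s-N}\lesssim\la\sigma\ra^{s-N}$ (from the $N$-smoothing of $R$ and \eqref{disp.H0}) and the a priori bound show the limit exists in $\cH^0$ and in $\cH^r$, so $\phi(t)\in\cH^r$ genuinely solves \eqref{eq.3}. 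By unitarity on $\cH^0$, $\norm{\phi(t)}_0=\norm{P_c\phi_0}_0>0$ is constant, while $\phi(t)-\phi_{\rm tr}(t)=U(t,0)\big(W_+P_c\phi_0-U(0,t)e^{-\im tH_0}P_c\phi_0\big)=\im\,U(t,0)\!\int_t^\infty U(0,\sigma)R(\sigma)e^{-\im\sigma H_0}P_c\phi_0\,d\sigma$ has $\cH^0$-norm $O(\la t\ra^{1-N})$, whence $\norm{\phi(t)}_{-r}\le\norm{\phi_{\rm tr}(t)}_{-r}+\norm{\phi(t)-\phi_{\rm tr}(t)}_{-r}\lesssim\la t\ra^{-r}+\la t\ra^{1-N}\lesssim\la t\ra^{-r}$. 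The interpolation inequality then gives $\norm{\phi(t)}_r\ge\norm{\phi(t)}_0^2/\norm{\phi(t)}_{-r}\gtrsim\la t\ra^r$, and transporting back through the first step yields \eqref{eq:gr0}. Apart from the Mourre-transfer step — the genuine novelty, exploiting that $T$ is produced \emph{inside} the algebra rather than being a generic compact perturbation — the normal form is the abstract analogue of \cite{del,BGMR2} and the perturbative step is a routine scattering-type argument powered by the smoothing of $R(t)$.
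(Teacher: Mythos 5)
Your proposal follows exactly the paper's three-step strategy — resonant pseudodifferential normal form (Proposition~\ref{lem:rpnf2} and Corollary~\ref{prop:rpdnf}), transfer of the Mourre estimate from $\la V\ra$ to $\la V\ra + T$ and the resulting local energy decay (Lemma~\ref{lem:H.M2}, Corollary~\ref{cor:ledeH}, Proposition~\ref{prop:decay}), and a Duhamel/wave-operator argument reinstating the smoothing remainder (Section~3.3) — i.e.\ precisely how the paper proves Theorem~\ref{thm:ab0} as the $W\equiv 0$ case of Theorem~\ref{thm:ab}. The only places you leave implicit — upgrading the Mourre estimate with compact error to a \emph{strict} one by shrinking the interval (required before invoking the Sigal--Soffer minimal velocity estimate of Theorem~\ref{thm:SS}), and using Weyl's theorem to confirm $\sigma(\la V\ra+T)\cap J\neq\emptyset$ — are exactly Steps~1 and~3 of Lemma~\ref{lem:H.M2}, and you correctly indicate where they belong.
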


We also  prove  a  stronger result: namely not only $V(t)$ is a transporter, 
but  also   {\em any}  operator sufficiently close  to it  (in the $\cA_0$-topology). 
Here the precise statement:

%\begin{theorem}
%\label{thm:ab}
%Fix arbitrary $r >0$. Under Assumptions {\rm I} -- {\rm III},  the following holds true. Consider the equation
%\begin{equation}
%\label{eq.ab}
%\im \dot \psi = \big( K_0 + V(t) +  \epsilon  W(t) \big)\psi 
%\end{equation}
%where $W \in C^\infty(\T, \cA_{0})$  is a symmetric operator.
% There exists $\epsilon_0 >0$ such that $\forall |\epsilon| \leq \epsilon_0$, the operator $V(t) + \epsilon W(t)$ is a transporter. More precisely,   there exists an initial datum $\psi_0 \in \cH^r$ such that the solution $\psi(t)$ of \eqref{eq.ab} fulfills
%\begin{equation}
%\label{}
%\norm{\psi(t)}_{r} \geq C \la t \ra^{\mu r}
%\end{equation}
%\end{theorem} 

\begin{theorem}
\label{thm:ab}
With the same assumptions of Theorem \ref{thm:ab0}, 
there exist  $\epsilon_0 > 0$ and  $\tM \in \N$ such that  for any $W \in C^\infty(\T, \cA_{0})$, $W(t)$ selfadjoint $\forall t$,   fulfilling  
\begin{equation}
\label{normW}
\sup_{t\in \T} \, \wp^0_\tM (W(t)) \leq \epsilon_0 , 
\end{equation}
then $V(t) + W(t)$ is a transporter for the equation
\begin{equation}
\label{eq.ab}
\im \pa_t \psi = \big( K_0 + V(t) +  W(t) \big)\psi  \ .
\end{equation}
More precisely, for any  
 $r >0$  there exist a solution $\psi(t)$ in $\cH^r$ of \eqref{eq.ab}
  and constants $C, T >0$ such that 
\begin{equation}
\label{eq:gr}
\norm{\psi(t)}_{r} \geq C \la t \ra^{r} , \quad \forall t \geq  T \ .
\end{equation}
\end{theorem}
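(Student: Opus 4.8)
\emph{Strategy.} The plan is to show that, for $\epsilon_0$ small enough and $\tM$ large enough, the perturbed potential $V(t)+W(t)$ itself satisfies all the hypotheses of Theorem \ref{thm:ab0} --- possibly with a smaller interval $I_0'$ in place of $I_0$ --- and then to apply Theorem \ref{thm:ab0} directly to \eqref{eq.ab}, viewing $V+W$ as the potential. Since $V+W\in C^\infty(\T,\cA_0)$ is selfadjoint for every $t$, and since Assumption II involves only $K_0$ and is untouched, the entire task reduces to checking Assumption III for the resonant average $\la V+W\ra = \la V\ra + \la W\ra$. Note first that $\la W\ra\in\cA_0$ is selfadjoint and that, by \eqref{est.4} together with the continuity of the average $A\mapsto\frac{1}{2\pi}\int_0^{2\pi}e^{\im sK_0}Ae^{-\im sK_0}\,\di s$ on $\cA_0$, one has $\wp^0_j(\la W\ra)\leq C_j\sup_t\wp^0_{N_j}(W(t))$ for suitable indices $N_j$; choosing $\tM$ larger than all the finitely many $N_j$ that will occur, condition \eqref{normW} forces $\wp^0_j(\la W\ra)\lesssim\epsilon_0$, and in particular $\norm{\la W\ra}_{\cL(\cH)}\leq C\epsilon_0=:\delta$.

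\emph{Step 1: stability of the Mourre estimate (Assumption III(ii)).} Since $A\in\cA_1$, Assumption I(iv) gives $[\la V\ra,A],[\la W\ra,A]\in\cA_0\subset\cL(\cH)$, with $\norm{[\la W\ra,A]}_{\cL(\cH)}\lesssim\wp^0_N(\la W\ra)\,\wp^1_N(A)\lesssim\epsilon_0$ by \eqref{est.3}. Combined with the Lipschitz continuity of the functional calculus on bounded selfadjoint operators (via the Helffer--Sj\"ostrand formula: $\norm{\chi(\la V\ra+\la W\ra)-\chi(\la V\ra)}_{\cL(\cH)}\lesssim\norm{\la W\ra}_{\cL(\cH)}$ for $\chi\in C_c^\infty(\R)$), this places us in the standard setting of perturbative stability of Mourre estimates --- the same mechanism used in the proof of Theorem \ref{thm:ab0} to absorb the term $T$. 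It yields: for every closed interval $I_0'\Subset\mathring I_0$ there is $\epsilon_0(I_0')>0$ such that, whenever \eqref{normW} holds with this $\epsilon_0$, the operator $\la V\ra+\la W\ra$ is conjugated to the same $A$ over $I_0'$ with constant $\geq\theta/2$ and a compact remainder. Thus Assumption III(ii) holds for $V+W$ over $I_0'$.

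\emph{Step 2: choosing $I_0'$ so that the a.c.\ spectrum survives (Assumption III(i)).} This is the main obstacle. Because $\la W\ra$ is \emph{not} compact it need not preserve the essential spectrum, and by the Weyl--von Neumann theorem an arbitrarily small perturbation could in principle turn the absolutely continuous spectrum of $\la V\ra$ into pure point spectrum --- exactly the pathology to be ruled out. We proceed as follows. By Mourre theory $\la V\ra$ has purely absolutely continuous spectrum in $I_0$ apart from finitely many eigenvalues $\{e_1,\dots,e_n\}$ of finite multiplicity; since $\abs{\sigma(\la V\ra)\cap I_0}>0$, the set $\Sigma:=\sigma_{\mathrm{ac}}(\la V\ra)\cap I_0$ has positive Lebesgue measure. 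Fix a Lebesgue density point $\lambda_0$ of $\Sigma$ lying in $\mathring I_0$ at positive distance from $\{e_j\}\cup\partial I_0$, and set $I_0':=[\lambda_0-\rho,\lambda_0+\rho]$ with $\rho>0$ so small that $I_0'\Subset\mathring I_0$, that $I_0'$ contains no $e_j$, and that $\abs{\Sigma\cap J}\geq\frac{9}{10}\abs{J}$ for every interval $J\subset I_0'$ centered at $\lambda_0$. Apply Step 1 with this $I_0'$: then $\la V\ra+\la W\ra$ has purely absolutely continuous spectrum in $I_0'$ apart from finitely many eigenvalues $\{f_1,\dots,f_m\}$ of finite multiplicity, and by the virial theorem their number (with multiplicity) is at most the number of eigenvalues $\geq\theta/4$ of minus the compact remainder $g_{I_0'}(\la V\ra)\,K\,g_{I_0'}(\la V\ra)$ (which differs in operator norm from the true remainder by $O(\epsilon_0)$): a fixed, finite number $m_0$ depending only on $V$, uniformly in $W$. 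Now $\sigma(\la V\ra+\la W\ra)$ lies within Hausdorff distance $\norm{\la W\ra}_{\cL(\cH)}\leq\delta$ of $\sigma(\la V\ra)$, while $\Sigma$ has measure $\geq\frac{9}{10}\cdot 2\rho$ inside $I_0'$ and $\bigcup_k B_{\delta}(f_k)\cup\{x:\mathrm{dist}(x,\partial I_0')\leq\delta\}$ has measure $\leq 2\delta(m_0+2)$; hence, for $\epsilon_0$ small (depending only on $\rho,m_0$, i.e.\ only on $V$) we may pick $\lambda_*\in\Sigma\subset\sigma(\la V\ra)$ with $\mathrm{dist}(\lambda_*,\{f_k\}\cup\partial I_0')>\delta$, so there is $s\in\sigma(\la V\ra+\la W\ra)$ with $\abs{s-\lambda_*}\leq\delta$, whence $s\in I_0'$ and $s\notin\{f_k\}$. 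Therefore $s$ belongs to the absolutely continuous spectrum of $\la V\ra+\la W\ra$, which is thus nonempty in $I_0'$ and --- being absolutely continuous --- of positive measure; this is Assumption III(i) for $V+W$ over $I_0'$.

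\emph{Conclusion.} Taking $\epsilon_0$ equal to the smallest of the finitely many smallness thresholds from Steps 1 and 2, and $\tM$ as above --- all depending only on $V,A,K_0$ --- any $W$ satisfying \eqref{normW} makes $V+W\in C^\infty(\T,\cA_0)$ satisfy Assumptions II and III (the latter over $I_0'$). Theorem \ref{thm:ab0} then applies to \eqref{eq.ab} and produces, for each $r>0$, a solution with $\norm{\psi(t)}_r\geq C\la t\ra^r$ for $t\geq T$. The only genuinely new difficulty is Step 2: extracting a \emph{quantitative} lower bound on the surviving absolutely continuous spectrum of the perturbed resonant average from the mere combination of Hausdorff continuity of the spectrum with the Mourre estimate (which controls the number, but not the location, of the eigenvalues). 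Step 1 is, by contrast, the by-now standard perturbative stability of Mourre estimates already invoked in the proof of Theorem \ref{thm:ab0}.
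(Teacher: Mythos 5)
Your proposal is correct in outline and takes a genuinely different route from the paper's. Here is a comparison.

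The paper does \emph{not} reduce Theorem \ref{thm:ab} to Theorem \ref{thm:ab0}. It proves Theorem \ref{thm:ab} directly and obtains Theorem \ref{thm:ab0} as the special case $W\equiv 0$. Concretely, after the normal form (Corollary \ref{cor:res.pseudo}) the effective operator is $H_N=\la V\ra+\la W\ra+T_N$, and Lemma \ref{lem:H.M2} handles the two perturbations in one shot, in three stages: (1) first localize $\la V\ra$ to a small subinterval $I_1\subset I_0$ on which the compact term $g_{I_1}(\la V\ra)Kg_{I_1}(\la V\ra)$ has operator norm $\leq\theta/2$ and can be absorbed, yielding a \emph{strict} Mourre estimate and hence purely absolutely continuous spectrum with no eigenvalues; (2) perturb by $\la W\ra$ and, because one starts from a strict estimate, obtain again a strict Mourre estimate over some $I_2\subset I_1$ and nonempty spectrum via the Weyl criterion; (3) add the compact $T_N$, which reintroduces a compact remainder, and relocalize as in (1). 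Because (1)–(2) give strict estimates, the paper never needs a virial bound on the number of eigenvalues nor a measure-theoretic selection argument: every spectral point found in $I_2$ is automatically absolutely continuous.

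Your plan is instead to verify Assumption III for $\la V+W\ra$ over some $I_0'\Subset I_0$ and to invoke Theorem \ref{thm:ab0} as a black box with $V+W$ in place of $V$. This is logically sound as a \emph{reduction}, provided Theorem \ref{thm:ab0} is granted an independent proof — which it has, namely the paper's argument with $W=0$ — but it reverses the logical dependence the paper actually uses. Your Step 1 (stability of the Mourre inequality under a small bounded selfadjoint perturbation, via Lemma \ref{lem:g}-type Lipschitz estimates for the functional calculus) is essentially the same computation that appears in Step 2 of Lemma \ref{lem:H.M2}, except that you keep the compact remainder $K$ instead of first localizing it away. Your Step 2 is where the approaches genuinely diverge: since your Mourre estimate for $\la V+W\ra$ is not strict, you cannot conclude directly that a nearby spectral point is absolutely continuous, so you invoke (a) the virial theorem to bound the number of embedded eigenvalues $\{f_k\}$ uniformly in $W$ (via the smallness of $\fK_W-g_{I_0'}(\la V\ra)Kg_{I_0'}(\la V\ra)$), and (b) a measure-theoretic argument at a density point of $\sigma_{ac}(\la V\ra)\cap I_0$ combined with Hausdorff $\delta$-continuity of the spectrum to select $\lambda_*\in\Sigma$ at distance $>\delta$ from $\{f_k\}\cup\partial I_0'$. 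This is correct but adds two ingredients the paper avoids: the virial theorem with uniform constants and the Lebesgue density argument. The paper's localization-to-strict-Mourre trick (the $g_\delta(H_0)K g_\delta(H_0)\to 0$ argument in Step 1 of Lemma \ref{lem:H.M2}) buys exactly this: once the estimate is strict, eigenvalues disappear entirely, and the measure bookkeeping reduces to Lemma \ref{lem:ac.meas}. Your argument, conversely, makes the ``stability of Assumption III under small $\cA_0$ perturbations'' conceptually explicit as a standalone fact about $\la V+W\ra$, which is a clean way to organize the stability statement and could be of independent interest.

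Two small points worth tightening if you develop the write-up: (i) you appeal to ``the same mechanism used in the proof of Theorem \ref{thm:ab0} to absorb $T$,'' which is slightly awkward in a proof that treats Theorem \ref{thm:ab0} as a black box; better to cite the generic perturbative stability of Mourre estimates directly. (ii) The bound $\|\fK_W-g_{I_0'}(\la V\ra)Kg_{I_0'}(\la V\ra)\|=O(\epsilon_0)$ deserves a line of bookkeeping, since $\fK_W$ arises from two sources (the change in $g_{I_0'}$ from $\la V\ra$ to $\la V+W\ra$ and the extra commutator $[\la W\ra,A]$), both of which are indeed $O(\epsilon_0)$ in $\cL(\cH)$.
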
 
 
 Let us  comment the above  results.
 \begin{enumerate}
 \item The growth of Sobolev norms of Theorem \ref{thm:ab0}  is truly an energy cascade  phenomenon;  indeed the $\cH^0$-norm of any solution of 
 \eqref{eq.ab0}  is preserved for all times, $\norm{\psi(t)}_0  = \norm{\psi(0)}_0$, $\, \forall t \in \R$. This is  due to the selfadjointness of $K_0 + V(t)$ (the same happens to solutions of \eqref{eq.ab}).

\item
Estimates \eqref{eq:gr0}, \eqref{eq:gr} provide optimal lower bounds for the speed of growth of the Sobolev norms. 
Indeed we proved  \cite{MaRo} that, under the assumptions above\footnote{ in particular the fact that $[K_0, V(t)]$ and $[K_0, V(t) + W(t)]$ are  uniformly (in $t$) bounded operators on the scale $\cH^r$}, {\em any} solution of \eqref{eq.ab0} or \eqref{eq.ab} fulfills the upper bounds
$$
\forall r>0 \ \ \ \exists \,\wt  C_r > 0 \colon 
\quad 
\norm{\psi(t)}_r \leq \wt C_r \la t \ra^r \, \norm{\psi(0)}_r .
$$
Thus, Theorems \ref{thm:ab0}, \ref{thm:ab} construct unbounded solutions with  optimal growth. 

\item Theorem \ref{thm:ab} proves robustness of certain type of transporters under small pseudodifferential perturbations.
This shows a sort of ``stability of instability'', which, up to our knowledge, is new in this context.

\item Actually there are  infinitely many distinct solutions undergoing  growth of Sobolev norms.
Their initial data are constructed in a unique way starting  from functions  belonging to  the absolutely continuous spectral subspace of the operator $\la V\ra$.
We describe such initial data in  Corollary \ref{cor:inf.many}.

\item Energy cascade is  a resonant phenomenon; here it happens because $V(t)$ oscillates at frequency $\omega = 1$  which resonates with the spectral gaps of $K_0$.  
In \cite{BGMR2} we proved  that if $V(t) \equiv \fV(\omega t)$ is quasiperiodic in time with a frequency vector $\omega \in \R^n$ fulfilling the non-resonant condition
$$
\exists \gamma, \tau >0 \colon \quad \abs{\ell + \omega \cdot k} \geq \frac{\gamma}{\la k \ra^\tau} \quad
\forall \ell, k \in \Z \times \Z^n \setminus \{0 \} 
$$
(which is violated if $V(t)$ is $2\pi$-periodic) then the Sobolev norms grow at most as $\la t \ra^\epsilon$ $\forall \epsilon >0$. 
The $\la t \ra^\epsilon$-speed of growth is also known  for systems with increasing  \cite{nen, MaRo, BGMR2} or shrinking \cite{duclos, MONTALTO2019} spectral gaps  and  for Schr\"odinger equation on $\T^d$ with bounded \cite{bourgain99,del2, BERTI2019} and even unbounded  \cite{bambusi2020growth} potentials.

\item In  concrete models  one can typically   prove that if  $V(t)$  is sufficiently small in size and oscillates in time with  a  strongly non resonant frequency $\omega$ (typically belonging to some Cantor set of large measure), then all  solutions have uniformly in time bounded Sobolev norms. 
Therefore the stability/instability of the system depends only on the resonance property of the frequency $\omega$.
 We mention just the recent results \cite{Bam16I,BGMR1} which deal with the harmonic oscillator (as we  consider it in the applications) and refer to those papers for a complete bibliography.

\item The most delicate assumption  to verify is  \eqref{mourre.ab}. In the applications, one can try to construct  an escape function for  the principal symbol
$\la v \ra$ of  $\la V \ra$.
This means to find a  symbol $a(x,\xi)$ of order 1 such that  the Poisson bracket $\{ \la v \ra, a \}$ is strictly positive in some energy levels:
$$
\exists c >0 \colon \quad \{ \la v \ra, a \} \geq c \qquad \mbox{ in } \{(x, \xi): \ \    \abs{\la v \ra(x,\xi) - \lambda } \leq \delta \} \ . 
$$
Then  symbolic calculus and sharp G{\aa}rding inequality imply that  \eqref{mourre.ab} holds in the interval $I= (\lambda - \delta, \lambda+\delta)$; see \cite{CdVS-R} Section 6.2 for details.

\end{enumerate}

We finally note  that the second theorem is stronger than the first one
and  implies  it  in the special case $W(t) \equiv 0$. 
However we think that the statement of Theorem \ref{thm:ab0} is clear and useful in the applications (see e.g.  Section \ref{sec:app}), so we decided to state it on its own. 
Having said so,  in the sequel we shall only prove Theorem \ref{thm:ab}.
 
\section{Proof of the abstract result}
 As already mentioned, we shall only prove Theorem \ref{thm:ab}.
The proof is divided in three steps; in the first one we put system  \eqref{eq.ab} in its resonant pseudodifferential normal form.
In the second one we analyze the dynamics of the effective Hamiltonian and prove  the existence of solutions with decaying negative Sobolev norms. The final step is to construct a solution of the complete equation exhibiting growth of Sobolev norms.

\subsection{Resonant pseudodifferential normal form} 
The goal of this section is to put  system \eqref{eq.ab} into its resonant pseudodifferential normal form up to an arbitrary $N$-smoothing operator. 
In this first step we shall only require Assumptions I and II.
It is slightly more convenient to deal with the equation
\begin{equation}
\label{eq:v}
\im \pa_t \psi = \big( K_0 + \fV(t) \big) \psi
\end{equation}
and then to specify the result for 
 $\fV(t) = V(t) + W(t)$ as in  \eqref{eq.ab}. 
 	Given  $\fV \in C^\infty(\T, \cA_m)$,  $m \in \R$,  we define   the {\em averaged operator}
\begin{align}
\label{av.op}
& \wh  \fV (t) := \frac{1}{2\pi} \int_0^{2\pi} e^{\im s K_0} \, \fV(t+s) \, e^{- \im s K_0} \, \di s \ . 
\end{align}
We shall prove below that $\wh  \fV (t) \in C^\infty(\T, \cA_m)$ (see Lemma \ref{prop:av}).
\begin{proposition}[Resonant pseudodifferential  normal form]
\label{lem:rpnf2}
Consider equation \eqref{eq:v} with $\fV \in C^\infty(\T, \cA_0)$,  $\fV(t)$ selfadjoint $\forall t$.
There exists a sequence 	$\{ X_j(t)\}_{j \geq 1}$ of selfadjoint (time-dependent) operators in $\cH$ with $X_j \in C^\infty(\T , \cA_{1-j})$ and fulfilling 
\begin{equation}
\label{unitary2}
\forall r \in \R , \ \exists c_{r,j}, C_{r,j} >0 \colon \qquad c_{r,j} \norm{\vf}_r  \leq \norm{e^{\pm \im X_j(t)}\vf}_r \leq C_{r,j} \norm{\vf}_r , \qquad \forall t \in \R ,
\end{equation} 
such that the following holds true. For any $N \geq 1$, the change of variables 
\begin{equation}
\label{change.v1}
\psi = e^{- \im X_1(t)} \cdots e^{- \im X_N(t)}  \vf  
\end{equation}
transforms \eqref{eq:v} into the equation
\begin{equation}
\label{eq.res}
\im \pa_t \vf = \big(K_0 + Z^{(N)}(t) + \fV^{(N)}(t)\big) \vf \ ; 
\end{equation}
here  $\fV^{(N)} \in C^\infty(\T, \cA_{-N})$ whereas $Z^{(N)} \in C^\infty(\T, \cA_0)$,   it is  selfadjoint $\forall t$,  it   fulfills 
\begin{equation}
\label{Z.prop2}
\im \pa_t Z^{(N)}(t) = [K_0, Z^{(N)}(t)]
\end{equation}
and it has the expansion
\begin{equation}
\label{Z.exp}
Z^{(N)}(t) = \wh \fV(t)  +  T^{(N)}(t) ,  \qquad T^{(N)} \in  C^\infty(\T, \cA_{-1}) \ .
\end{equation}
Here  $\wh \fV(t)$ is the averaged operator defined in \eqref{av.op}. 
\end{proposition}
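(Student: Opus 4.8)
The plan is to construct the operators $X_j(t)$ iteratively, one order at a time, by solving a homological equation whose solution is explicit thanks to the spectral structure of $K_0$ from Assumption II. Write $\fV = \fV^{(0)}$ and suppose inductively that after $j-1$ steps we have reached
\[
\im \pa_t \vf = \big(K_0 + Z^{(j-1)}(t) + \fV^{(j-1)}(t)\big)\vf,
\]
where $Z^{(j-1)} \in C^\infty(\T,\cA_0)$ is selfadjoint and solves $\im\pa_t Z^{(j-1)} = [K_0, Z^{(j-1)}]$, while $\fV^{(j-1)} \in C^\infty(\T, \cA_{1-j})$ is selfadjoint. The base case is $Z^{(0)} = 0$, $\fV^{(0)} = \fV$. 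At the $j$-th step I conjugate with $e^{-\im X_j(t)}$, $X_j \in C^\infty(\T,\cA_{1-j})$ to be chosen. Using the Lie expansion $e^{\im X_j}(K_0 + \cdots)e^{-\im X_j} = K_0 + \im[X_j,K_0] + \cdots$ and the fact that conjugation adds $-\pa_t$-type terms, the terms of order exactly $1-j$ in the new equation are
\[
\fV^{(j-1)}(t) + \im[X_j(t),K_0] - \pa_t X_j(t),
\]
all the other contributions being either already accounted for in $Z^{(j-1)}$ (order $0$) or of order $\le -j$ by Assumption I(iii)–(iv). So I want to choose $X_j$ so that this combination equals $Z_j(t)$, a new operator of order $1-j$ that commutes correctly with $K_0$, namely $\im\pa_t Z_j = [K_0,Z_j]$; then I set $Z^{(j)} = Z^{(j-1)} + Z_j$ and let $\fV^{(j)}$ be the remaining, genuinely order-$(-j)$ tail, which is smooth in $t$ and selfadjoint by construction (choosing $X_j$ selfadjoint).

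The key algebraic point is the \emph{homological equation}
\[
\pa_t X_j(t) - \im[K_0, X_j(t)] = \fV^{(j-1)}(t) - Z_j(t),
\]
to be solved for $X_j$ and $Z_j$. Conjugating by $e^{\im t K_0}$, set $Y_j(t) := e^{\im t K_0} X_j(t) e^{-\im t K_0}$ and $G_j(t) := e^{\im t K_0}\fV^{(j-1)}(t)e^{-\im t K_0}$; the equation becomes $\pa_t Y_j(t) = G_j(t) - e^{\im t K_0}Z_j(t)e^{-\im tK_0}$. Since $Z_j$ solves the commutator equation, $e^{\im t K_0}Z_j(t)e^{-\im t K_0}$ is time independent; the natural choice is to take it equal to the time average $\frac{1}{2\pi}\int_0^{2\pi}G_j(s)\,\di s$, which by Remark \ref{K0.flow.p} and Assumption II(i) makes $Y_j(t)$ $2\pi$-periodic. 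Then $Y_j(t) := \int_0^t \big(G_j(s) - \frac{1}{2\pi}\int_0^{2\pi}G_j(\sigma)\di\sigma\big)\di s$ solves it, and $X_j(t) := e^{-\im t K_0}Y_j(t)e^{\im t K_0}$. Assumption II(ii) (the Egorov property, with the uniform seminorm bound \eqref{est.4}) guarantees that $G_j \in C^\infty(\T,\cA_{1-j})$ with controlled seminorms, hence $Y_j$, hence $X_j \in C^\infty(\T,\cA_{1-j})$; selfadjointness of $\fV^{(j-1)}(t)$ passes to $Z_j$ and $X_j$. For $j=1$ this produces precisely $Z_1(t) = e^{-\im t K_0}\big(\frac{1}{2\pi}\int_0^{2\pi}e^{\im s K_0}\fV(s)e^{-\im s K_0}\di s\big)e^{\im t K_0} = \wh\fV(t)$ (after relabeling $s \mapsto s - t$ and using periodicity), which gives the leading term in \eqref{Z.exp}; the operators $Z_j$ for $j\ge 2$ assemble into $T^{(N)} \in C^\infty(\T,\cA_{-1})$, and $\fV^{(N)} := \fV^{(j)}$ at $j=N$ is the order $-N$ remainder. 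The bound \eqref{unitary2} follows because $X_j \in \cA_{1-j} \subseteq \cA_0$ for $j\ge 1$, so $X_j(t)$ is bounded on every $\cH^r$ uniformly in $t$ (using \eqref{est.1} and \eqref{est.4}); a standard Duhamel/Gronwall argument on $\tfrac{\di}{\di t}\|e^{\pm\im s X_j(t)}\vf\|_r^2$ then yields the two-sided estimate.

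The main obstacle is the careful bookkeeping of the conjugation: after applying $e^{-\im X_j(t)}$ I must expand $e^{\im X_j}(K_0 + Z^{(j-1)} + \fV^{(j-1)})e^{-\im X_j} - \im e^{\im X_j}\pa_t e^{-\im X_j}$ into (i) the desired terms $K_0 + Z^{(j)}$, (ii) the new order-$(-j)$ remainder, and verify that \emph{every} leftover piece genuinely lands in $\cA_{-j}$ and depends smoothly and boundedly on $t \in \T$. The single commutator $\im[X_j,K_0]$ drops by one order to $\cA_{1-j}$ but cancels against $\fV^{(j-1)}$ up to the average; the iterated commutators $[X_j,[X_j,K_0]]$, $[X_j,\fV^{(j-1)}]$, etc., all gain strictly, landing in $\cA_{-j}$ or lower by Assumption I(iv) and the composition continuity \eqref{est.2}–\eqref{est.3}. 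The one subtlety worth isolating is the term $[X_j, Z^{(j-1)}]$: since $Z^{(j-1)} \in \cA_0$ and $X_j \in \cA_{1-j}$ this is in $\cA_{-j}$ for $j\ge 1$, hence harmless, so the $Z$-part accumulated so far is not disturbed at leading order. Convergence of the Lie series $e^{\im X_j}(\cdot)e^{-\im X_j} = \sum_k \tfrac{1}{k!}\mathrm{ad}_{\im X_j}^k(\cdot)$ in the relevant $\cA_m$-topology is not an issue at fixed finite order because we only ever need finitely many terms of the expansion to separate out the order-$(-j)$ remainder — all deeper terms are automatically in $\cA_{\le -j}$; alternatively one truncates the series with an integral remainder and absorbs it. Finally, Assumption I(v) is what lets us re-collect the tail $\fV^{(j)}$, defined a priori only as a map $\cH^{+\infty}\to\cH^{-\infty}$, as a bona fide element of $\cA_{-j}$. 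Iterating to $j = N$ and setting $Z^{(N)} = \sum_{j=1}^N Z_j$, $\fV^{(N)} = \fV^{(N)}$ completes the construction.
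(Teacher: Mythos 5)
Your construction follows essentially the same strategy as the paper: iterate the conjugation by $e^{-\im X_j(t)}$, solve the resulting homological equation by passing to the interaction picture $Y_j(t)=e^{\im t K_0}X_j(t)e^{-\im t K_0}$ and subtracting the resonant average $\wh{\fV^{(j-1)}}(t)$ to kill secular growth, collect the normal-form pieces $Z_j=\wh{\fV^{(j-1)}}$ into $Z^{(N)}=\wh\fV+\sum_{j\geq 2}Z_j$, and push all higher iterated commutators into the $\cA_{-j}$ tail via the graded Lie-algebra structure and the commutator (Lie) expansion. This is exactly the paper's induction, and Lemma \ref{prop:av}, Lemma \ref{lem:he}, Lemma \ref{comX} and Lemma \ref{MR} correspond to the steps you describe.

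One small slip worth flagging: the homological equation you display carries the wrong sign, $\pa_t X_j - \im[K_0,X_j]$ rather than $\pa_t X_j + \im[K_0,X_j]$. Your derivation of the order-$(1-j)$ terms, $\fV^{(j-1)}+\im[X_j,K_0]-\pa_t X_j$, and your passage to $\pa_t Y_j=G_j - e^{\im t K_0}Z_j e^{-\im t K_0}$ both implicitly use the correct sign, so this is a typo rather than a substantive error. Also, invoking Assumption I(v) to recollect $\fV^{(j)}$ is not actually needed here: the Lie expansion with integral remainder (Lemma \ref{comX}) already lands the remainder in $\cA_{m-(M+1)(1-\rho)}$ directly, so no a-posteriori patching of the tail is required.
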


In order to prove the proposition we start with some preliminary results. The first regards the  properties of $\wh \fV(t)$.

\begin{lemma}
\label{prop:av}
Let $\fV \in C^\infty(\T, \cA_m)$, $m \in \R$,  $\fV(t)$ selfadjoint $\forall t$. Then the following holds true.
\begin{itemize}
\item[(i)] $\wh \fV  \in C^\infty(\T, \cA_m)$, it is selfadjoint $\forall t$,  it commutes with $\im \pa_t - K_0$, i.e. $ \im \pa_t  \wh  \fV(t) = [ K_0 , \wh  \fV(t)] $ and 
\begin{equation}
\label{B-est}
\forall j, \ell \geq 0   \quad \exists \, M\in \N  ,  \ C >0  \ \ {\rm s.t.} \ \ \sup_{t \in \T}  \wp^m_j(\pa_t^\ell \wh \fV(t)) \leq C \,  \sup_{t \in \T } \  \wp^m_M(\fV(t) )  \ . 
\end{equation}
\item[(ii)] The resonant averaged operator $\la \fV \ra$, defined in \eqref{res.av}, belongs to $\cA_m$, it is selfadjoint and 
\begin{equation}
\label{B-est2}
\forall j  \geq 0 \quad \exists \, M \in \N, \  C >0  \ \ {\rm s.t.} \ \  \wp^m_j(\la \fV \ra) \leq C  \,  \sup_{t \in \T }   \  \wp^m_M( \fV(t) ) \ . 
\end{equation}
\item[(iii)] One has the chain of identities
\begin{equation}
\label{av.W}
\wh \fV(0)  = \la \fV \ra  = e^{\im t K_0} \,\wh \fV(t) \, e^{- \im t K_0}  =  \langle \,  \wh \fV \, \rangle , \qquad \forall t \in \R \ . 
\end{equation}
\end{itemize}
\end{lemma}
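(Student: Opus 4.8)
The plan is to reduce everything to the single identity
\begin{equation}
\label{key.id.plan}
\wh \fV(t) = e^{-\im t K_0}\,\la \fV\ra\, e^{\im t K_0}\,,\qquad\forall t\in\R\,,
\end{equation}
after which all three items become short computations. To prove \eqref{key.id.plan} I would start from the definition \eqref{av.op}, substitute $u=t+s$, and observe that the integrand $u\mapsto e^{\im(u-t)K_0}\fV(u)e^{-\im(u-t)K_0}$ is $2\pi$-periodic: indeed $\fV$ is $2\pi$-periodic by hypothesis, and by Remark \ref{K0.flow.p} the extra factor $e^{\pm\im 2\pi K_0}=e^{\pm\im 2\pi\lambda}$ produced by the shift $u\mapsto u+2\pi$ is a scalar and cancels. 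Hence the integral over $[t,t+2\pi]$ equals the integral over $[0,2\pi]$, and pulling the ($t$-dependent) conjugation out of the integral yields \eqref{key.id.plan}.

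For item (ii): the resonant average $\la\fV\ra=\frac1{2\pi}\int_0^{2\pi}e^{\im s K_0}\fV(s)e^{-\im s K_0}\,\di s$ is the integral over the compact interval $[0,2\pi]$ of the map $s\mapsto e^{\im s K_0}\fV(s)e^{-\im s K_0}$, which by Assumption II(ii) (Egorov property) takes values in $\cA_m$ and is continuous (indeed $C^\infty$) there; hence the Bochner integral exists in the Fréchet space $\cA_m$ and $\la\fV\ra\in\cA_m$. Selfadjointness is clear since the integrand is selfadjoint for each $s$. For the seminorm bound \eqref{B-est2} I would pass the seminorm $\wp^m_j$ inside the integral and use \eqref{est.4}, getting $\wp^m_j(\la\fV\ra)\le\sup_{s}\wp^m_j(e^{\im s K_0}\fV(s)e^{-\im s K_0})\le C\,\sup_s\wp^m_N(\fV(s))$.

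For item (i): from \eqref{key.id.plan} we get $\wh\fV(t)\in\cA_m$ and $t\mapsto\wh\fV(t)\in C^\infty_b(\R,\cA_m)$, again by Assumption II(ii); selfadjointness is inherited from $\la\fV\ra$ (item (ii)) by unitary conjugation; and differentiating \eqref{key.id.plan} gives $\im\pa_t\wh\fV(t)=[K_0,\wh\fV(t)]$, hence by induction $\pa_t^\ell\wh\fV(t)=(-\im)^\ell\,\mathrm{ad}_{K_0}^{\ell}(\wh\fV(t))$. This last formula is the crucial one for the quantitative bound \eqref{B-est}: since $K_0\in\cA_1$ and $\cA$ is a graded Lie algebra with continuous bracket (Assumption I(iv)), the map $\mathrm{ad}_{K_0}$ sends $\cA_m$ into $\cA_m$ continuously, so iterating \eqref{est.3} gives $\wp^m_j(\pa_t^\ell\wh\fV(t))\le C_\ell\,\wp^m_{j'}(\wh\fV(t))$; then \eqref{est.4} applied to \eqref{key.id.plan} bounds this uniformly in $t$ by $\wp^m_{j''}(\la\fV\ra)$, and \eqref{B-est2} closes the estimate in terms of $\sup_\tau\wp^m_M(\fV(\tau))$. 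Finally, item (iii): $\wh\fV(0)=\la\fV\ra$ is \eqref{av.op} at $t=0$; $e^{\im t K_0}\wh\fV(t)e^{-\im t K_0}=\la\fV\ra$ is \eqref{key.id.plan} rearranged; and $\la\wh\fV\ra=\frac1{2\pi}\int_0^{2\pi}e^{\im s K_0}\wh\fV(s)e^{-\im s K_0}\,\di s=\frac1{2\pi}\int_0^{2\pi}\la\fV\ra\,\di s=\la\fV\ra$ using \eqref{key.id.plan} once more inside the integral.

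The computations are all routine; the only point requiring a little care is the seminorm estimate \eqref{B-est}, whose right-hand side carries no time derivative of $\fV$. The reason it nonetheless holds is exactly the observation above that, on the average $\wh\fV$, the operator $\pa_t$ acts as the internal map $\mathrm{ad}_{K_0}$ (which lowers order by one but $1+m-1=m$, so stays in $\cA_m$) rather than as differentiation of $\fV$ itself. I therefore expect that to be the main thing to get right, together with the (standard but worth stating explicitly) justification that integration and $t$-differentiation may be carried out in the Fréchet space $\cA_m$.
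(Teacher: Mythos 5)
Your proof is correct and follows essentially the same route as the paper's. The key conjugation identity $\wh\fV(t)=e^{-\im tK_0}\la\fV\ra e^{\im tK_0}$ that you establish up front is exactly what the paper proves in item (iii) (stated as $e^{\im tK_0}\wh\fV(t)e^{-\im tK_0}=\la\fV\ra$, via the same change of variables and periodicity argument), and both proofs then obtain the equation $\im\pa_t\wh\fV=[K_0,\wh\fV]$ and use the inductive relation $\pa_t^{\ell+1}\wh\fV=-\im[K_0,\pa_t^\ell\wh\fV]$ together with the graded-Lie-algebra estimate \eqref{est.3} to prove \eqref{B-est}; the only difference is organizational — you front-load the identity and derive (i), (ii) from it, while the paper handles (i) by a direct differentiation under the integral and records the identity last.
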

\begin{proof}
$(i)$ The properties  $\wh \fV  \in  C^\infty(\T, \cA_m)$ and $\wh \fV(t)$ selfadjoint $\forall t$  follow from  Assumption II   and the fact that   $ \fV (t)$ is $2\pi$-periodic in $t$ and selfadjoint $\forall t$.
Let us prove it commutes with  $\im \pa_t - K_0$. Using 
$$\pa_s \left( e^{\im s K_0 } \, \fV (t+s) \, e^{- \im s K_0}\right)  = e^{\im s K_0} \big( \im [K_0 , \fV (t+s)] + \pa_s \fV (t+s) \big) e^{- \im s K_0}$$  we get
\begin{align*}
 \pa_t \wh \fV  (t)
 & =  \frac{1}{2\pi} \int_0^{2\pi} e^{\im s K_0} \,  \pa_t \fV (t+s) \, e^{- \im s K_0} \, \di s  = 
\frac{1}{2\pi} \int_0^{2\pi} e^{\im s K_0} \,  \pa_s  \fV (t+s) \, e^{- \im s K_0} \, \di s \\
& =  \frac{1}{2\pi \im }\int_0^{2\pi} e^{\im s K_0} \,  [K_0,   \fV (t+s)] \, e^{- \im s K_0} \, \di s 
= \im^{-1} \, [ K_0,  \wh \fV(t)   ]
\end{align*}
where in the second line we used  the periodicity of $s \mapsto e^{\im s K_0 } \, \fV (t+s) \, e^{- \im s K_0}$ (see Remark \ref{K0.flow.p}) to remove the boundary terms. 
Estimate \eqref{B-est} for $\ell = 0$  follows from Assumption II.
  For $\ell \geq 1$ we use induction: assume   \eqref{B-est} is true  up to  a certain  $\ell$; using  $\pa_t^{\ell+1}\wh \fV(t) = - \im \pa_t^\ell [K_0,\wh \fV(t)] = - \im [K_0, \pa_t^\ell \wh\fV(t)]$, we get $\forall j \in \N$
  $$
  \wp_j^m(\pa_t^{\ell+1}\wh\fV(t)) \leq \ \wp_{j}^m([K_0, \pa_t^\ell \wh\fV(t)]) \leq C  \wp_{j_1}^m( \pa_t^\ell \wh\fV(t)) \leq  C  \wp_{j_2}^m( \fV(t))
  $$
  using also the inductive assumption. This proves \eqref{B-est}.
\\
$(ii)$ It is clear that $\la \fV  \ra $ is time independent, selfadjoint and in  $\cA_m$ by Assumption II.  Estimate \eqref{B-est2} follows from Assumption II.\\
$(iii)$  Clearly  $\wh \fV(0) = \la \fV \ra$.
Then, as the map $\tau \mapsto e^{\im \tau K_0} \, \fV (\tau) \, e^{- \im \tau K_0}$ is $2\pi$-periodic, one has $\forall t \in \R$
$$
 e^{\im t K_0}\,  \wh \fV(t) \, e^{- \im t K_0} = \frac{1}{2\pi} \int_0^{2\pi} e^{\im (t+s) K_0} \, \fV(t+s) \, e^{- \im (s+t) K_0} \, \di s = \la \fV \ra \ . 
 $$
Finally, exploiting this last identity, one has 
$$
\langle \,  \wh \fV \, \rangle = \frac{1}{2\pi}\int_0^{2\pi} e^{\im t K_0} \, \wh \fV (t) \, e^{- \im t K_0} \di t  = \frac{1}{2\pi} \int_0^{2\pi} \la \fV  \ra \di t = \la \fV  \ra 
$$
which completes the proof of  \eqref{av.W}.
\end{proof}

The second preliminary result regards how to solve the homological equations which appear during the normal form procedure. 
More precisely we look for  a  time periodic operator $X(t)$ solving the homological equation 
\begin{equation}
\label{hom}
\pa_t X(t)  + \im [K_0, X(t)] = \fV(t) - \wh \fV(t) , 
\end{equation}
where $\wh \fV(t)$ is the averaged operator defined in \eqref{av.op}.
This is done in the next lemma.

\begin{lemma}
\label{lem:he}
Let $\fV \in C^\infty(\T, \cA_m)$, $m \in \R$, $\fV(t)$ selfadjoint $\forall t$. The homological equation \eqref{hom} has  a solution $X \in  C^\infty(\T, \cA_m)$ and $X(t)$ is selfadjoint $\forall t$.
\end{lemma}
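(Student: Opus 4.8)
The plan is to solve the homological equation \eqref{hom} by an explicit integral formula obtained by conjugating with the flow of $K_0$. First I would pass to the ``interaction picture'': set $Y(\tau) := e^{\im \tau K_0} X(\tau) e^{-\im \tau K_0}$, so that $\pa_\tau Y(\tau) = e^{\im \tau K_0}\big(\pa_\tau X(\tau) + \im[K_0, X(\tau)]\big)e^{-\im \tau K_0}$. Hence \eqref{hom} becomes, writing $\fW(\tau) := e^{\im \tau K_0}\big(\fV(\tau) - \wh\fV(\tau)\big)e^{-\im \tau K_0}$,
\begin{equation}
\pa_\tau Y(\tau) = \fW(\tau) \ .
\end{equation}
The key observation is that, by Lemma \ref{prop:av}(iii) and the definition \eqref{av.op}, the $\tau$-average of $\fW(\tau)$ over one period vanishes: indeed $e^{\im \tau K_0}\wh\fV(\tau)e^{-\im\tau K_0} = \la \fV\ra$ is constant, while $\frac{1}{2\pi}\int_0^{2\pi} e^{\im\tau K_0}\fV(\tau)e^{-\im\tau K_0}\,\di\tau = \la \fV\ra$ as well (using $2\pi$-periodicity of $\tau \mapsto e^{\im\tau K_0}\fV(\tau)e^{-\im\tau K_0}$, Remark \ref{K0.flow.p}). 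Therefore $\int_0^{2\pi}\fW(\tau)\,\di\tau = 0$, which is exactly the solvability condition: I can define
\begin{equation}
Y(\tau) := \int_0^\tau \fW(\sigma)\,\di\sigma
\end{equation}
and this is automatically $2\pi$-periodic in $\tau$. Undoing the conjugation, $X(\tau) := e^{-\im\tau K_0} Y(\tau) e^{\im\tau K_0}$ solves \eqref{hom}; it is $2\pi$-periodic because $Y$ is and because $e^{\im 2\pi K_0}$ is a scalar (Remark \ref{K0.flow.p}), and selfadjointness of $X(\tau)$ follows from that of $\fV(\tau)$ and $\wh\fV(\tau)$ together with selfadjointness of $K_0$.

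It remains to check the regularity and algebra membership: $X \in C^\infty(\T,\cA_m)$. For this I would argue as follows. By Assumption II(ii), $\tau \mapsto e^{\im\tau K_0}\fV(\tau)e^{-\im\tau K_0}$ belongs to $C^\infty_b(\R,\cA_m)$ (composing the smooth $\cA_m$-valued map $\fV$ with the Egorov map, using the chain rule and the seminorm estimate \eqref{est.4}); similarly for $\wh\fV$, which lies in $C^\infty(\T,\cA_m)$ by Lemma \ref{prop:av}(i). Hence $\fW \in C^\infty(\T,\cA_m)$. Since $\cA_m$ is a Fr\'echet space (Assumption I(ii)), the $\cA_m$-valued Riemann integral $\int_0^\tau \fW(\sigma)\,\di\sigma$ is well-defined and depends smoothly on $\tau$ with values in $\cA_m$, with seminorms controlled by $\sup_\sigma \wp^m_j(\fW(\sigma))$; so $Y \in C^\infty(\T,\cA_m)$. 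Finally $X(\tau) = e^{-\im\tau K_0} Y(\tau) e^{\im\tau K_0}$ is again in $C^\infty(\T,\cA_m)$ by one more application of Assumption II(ii), and the seminorm bounds $\wp^m_j(\pa_\tau^\ell X(\tau)) \leq C\sup_{t}\wp^m_M(\fV(t))$ propagate through \eqref{est.4}, \eqref{B-est} and Leibniz.

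The main obstacle — really the only subtle point — is verifying the solvability condition $\int_0^{2\pi}\fW = 0$ with the correct choice of the integration basepoint so that the primitive closes up into a genuinely $2\pi$-periodic function; everything else is bookkeeping with the Fr\'echet seminorm estimates already assembled in Remark \ref{rem:control}, Assumption II and Lemma \ref{prop:av}. One should also note that the solution is not unique (one may add any $\tau$-independent operator commuting with $K_0$), but the stated existence is all that is needed for the normal form construction in Proposition \ref{lem:rpnf2}.
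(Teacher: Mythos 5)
Your proposal is correct and follows essentially the same route as the paper: both pass to the interaction picture via $Y(\tau)=e^{\im\tau K_0}X(\tau)e^{-\im\tau K_0}$, solve $\pa_\tau Y=\fW$ by integrating from $0$ to $\tau$, and deduce periodicity from the identity $\la\fV\ra=\la\wh\fV\ra$ of \eqref{av.W} (your ``$\int_0^{2\pi}\fW=0$'' is exactly that identity, and the paper phrases the same fact by computing $X(t+2\pi)-X(t)$ directly). The regularity and selfadjointness bookkeeping you outline is also the paper's.
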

\begin{proof}
We look for a solution of \eqref{hom} using the method of variation of constants. 
In particular we take $X(t)  = e^{- \im t K_0} \, Y(t) \, e^{\im t K_0}$ for some $Y \in C^\infty(\R, \cA_m)$
with $Y(0) = 0$  to be determined.
  Then $X$ solves \eqref{hom} provided $  \pa_t Y(t) = e^{\im t K_0} \, (\fV(t) - \wh \fV(t) ) \, e^{-\im t K_0} $, giving 
$$
Y(t) = \int_0^t e^{\im s K_0} \big( \fV(s) - \wh \fV (s) \big) \, e^{- \im s K_0} \, \di s .
$$
By  Lemma \ref{prop:av} and Assumption II, $Y \in C^\infty(\R, \cA_m)$ and it is selfadjoint  $\forall t$. Therefore one gets
$$
X(t) = \int_0^t e^{\im (s-t) K_0} \big( \fV(s) - \wh \fV (s) \big) \, e^{- \im (s-t) K_0} \, \di s .
$$
Again $X \in C^\infty(\R, \cA_m)$ and it is selfadjoint $\forall t$. Finally (recall Remark \ref{K0.flow.p})
\begin{align*}
X(t+2\pi) - X(t) 
& =    \int_t^{t+2\pi} e^{\im (s-t) K_0}\, \big( \fV(s) - \wh \fV(s) \big)\, e^{- \im (s-t) K_0} \, \di s \\
& = e^{- \im t K_0} \int_0^{2\pi}  e^{\im s K_0}\, \big( \fV(s) - \wh \fV(s) \big)\, e^{- \im s K_0} \, \di s  \ e^{\im t K_0} \\
 & =  2\pi e^{- \im t K_0} \big( \la \fV \ra - \langle \, \wh \fV  \, \rangle \big) e^{\im t K_0} 
 \stackrel{\eqref{av.W}}{=} 0 
\end{align*}
which proves the periodicity of $t \mapsto X(t)$.
\end{proof}

We are ready to prove Proposition \ref{lem:rpnf2}. 
During the proof we shall use some results proved in \cite{BGMR2} about the flow generated by   pseudodifferential operators;   we collect them, for the reader's convenience,  in  Appendix \ref{app:flow}.

\begin{proof}[Proof of Proposition \ref{lem:rpnf2}]
The proof is  inductive on $N$.  Let us start with $N = 1$.
We look for a change of variables of the form $\psi= e^{ -\im X_1(t)}
\vf$ where $X_1(t)\in C^\infty(\T, \cA_{0 })$ is  selfadjoint  $\forall t$,  to be determined.
By Lemma \ref{T.1},  $\vf$ fulfills the Schr\"odinger equation
$\im \pa_t \vf = H^+(t) \vf$ with
\begin{align*}
\label{4.1.1}
H^{+}(t)&:= e^{\im X_1(t)} \, \big( K_0 + \fV(t) \big)\, e^{-\im X_1(t)}
-\int_0^1 e^{\im s X_1(t) } \,(\pa_t X_1(t) ) \, e^{-\im
  s X_1(t) }  \ \di s  \ . 
  \end{align*}
  Then a commutator  expansion, see Lemma \ref{comX}, gives
  \begin{align*}
  H^{+}(t)
  &= K_0 + \im [ X_1(t), K_0]  + \fV(t)  - \pa_t  X_1 +    \fV^{(1)}(t) 
\end{align*} 
with $\fV^{(1)}  \in  C^\infty(\T, \cA_{-1})$, selfadjoint $\forall t$.
By Lemma \ref{lem:he}, we choose     $X_1 \in C^\infty(\T, \cA_0)$, selfadjoint  $\forall t$, s.t.
\begin{equation}
\label{hom.4}
\im [K_0,X_1(t)] + \pa_t X_1(t) = \fV(t) - \wh \fV(t)  \ , 
\end{equation}
where $ \wh \fV(t) $ is the averaged operator
(see \eqref{av.op}).
With this choice we have  
  \begin{align}
\label{4.1.3}
H^{+}(t)&= K_0 +  Z^{(1)}(t) + \fV^{(1)}(t) \ , 
\quad Z^{(1)}(t) := \wh \fV(t) \ .
\end{align} 
 By Lemma \ref{prop:av}, $Z^{(1)} \in  C^\infty(\T, \cA_0)$, it is selfadjoint $\forall t$, it  commutes with $\im \pa_t - K_0$.
The map $e^{- \im X_1(t)}$ fulfills \eqref{unitary2} thanks to Lemma \ref{MR}. This concludes the first step. 

 The iterative step $N\to N+1$ is proved
following the same lines, just adding the remark that $e^{\im
  X_{N+1}}Z^{(N)} e^{-\im X_{N+1}}-Z^{(N)}\in
C^\infty(\T, \cA_{-N-1})$, 
and solving the homological equation
\begin{equation}
\label{hom.5}
\im [K_0,X_{N+1}(t)] + \pa_t X_{N+1}(t) = \fV^{(N)}(t) - \wh{\fV^{(N)}}(t)  \ . 
\end{equation}
So one puts  $Z^{(N+1)} := Z^{(N)} + \wh{\fV^{(N)}}$. 
Note that $\wh{\fV^{(N)}} \in C^\infty(\T, \cA_{-N})$, so $ Z^{(N)}$ has an expansion in operators of decreasing order.

\end{proof}

It turns out that  property \eqref{Z.prop2} implies that 
$e^{\im t K_0} \, Z^{(N)}(t) \, e^{- \im t K_0}$ is time independent. A consequence of this fact is the following corollary. 
\begin{corollary}
\label{prop:rpdnf}
Consider equation \eqref{eq:v} with $\fV \in C^\infty(\T, \cA_0)$,  $\fV(t)$ selfadjoint $\forall t$.
 Fix $ N \in \N$ arbitrary. There  exists a change of coordinates 
 $\cU_N(t)$ unitary in $\cH$ and  fulfilling
\begin{equation}
\label{unitary}
\forall r \geq 0  \quad \exists c_r, C_r >0 \colon \qquad c_r \norm{\vf}_r  \leq \norm{\cU_N(t)^{\pm} \vf}_r \leq C_r \norm{\vf}_r , \qquad \forall t \in \R , 
\end{equation}
 such that  $\psi(t)$ is a solution of \eqref{eq:v} if and only if 
 $\phi(t) := \cU_N(t) \psi(t)$ solves 
\begin{equation}
\label{res.eq}
\im \pa_t \phi = \big(\la \fV \ra +  T_N + R_N(t) \big)\phi \ ; 
\end{equation}
here  $\la \fV\ra$ is the resonant average of $\fV$ (see  \eqref{res.av}),  $T_N \in \cA_{-1}$ is  time independent and selfadjoint and 
 $R_N \in C^\infty(\T, \cA_{-N})$.
\end{corollary}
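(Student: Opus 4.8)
The plan is to obtain the corollary from Proposition \ref{lem:rpnf2} by composing the normal form transformation \eqref{change.v1} with the explicit conjugation by $e^{-\im t K_0}$, which removes $K_0$ from the generator. First I would fix $N\in\N$ and apply Proposition \ref{lem:rpnf2} to \eqref{eq:v}: this produces selfadjoint operators $X_j\in C^\infty(\T,\cA_{1-j})$, $j=1,\dots,N$, obeying \eqref{unitary2}, such that $\psi=e^{-\im X_1(t)}\cdots e^{-\im X_N(t)}\vf$ transforms \eqref{eq:v} into \eqref{eq.res}, where $Z^{(N)}$ satisfies \eqref{Z.prop2} and \eqref{Z.exp} and $\fV^{(N)}\in C^\infty(\T,\cA_{-N})$. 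Inverting the change of variables gives $\vf=e^{\im X_N(t)}\cdots e^{\im X_1(t)}\psi$, and I would set
\[
\cU_N(t):=e^{\im t K_0}\,e^{\im X_N(t)}\cdots e^{\im X_1(t)},\qquad \phi(t):=\cU_N(t)\psi(t)=e^{\im t K_0}\vf(t)\ .
\]

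Next I would derive the equation for $\phi$. Since $K_0$ commutes with $e^{\im t K_0}$, a direct computation gives $\im\pa_t\phi=e^{\im t K_0}\big(Z^{(N)}(t)+\fV^{(N)}(t)\big)e^{-\im t K_0}\,\phi$. The one real computation of the proof is that \eqref{Z.prop2} forces $e^{\im t K_0}Z^{(N)}(t)e^{-\im t K_0}$ to be constant in $t$: indeed
\[
\pa_t\big(e^{\im t K_0}Z^{(N)}(t)e^{-\im t K_0}\big)=e^{\im t K_0}\big(\im[K_0,Z^{(N)}(t)]+\pa_t Z^{(N)}(t)\big)e^{-\im t K_0}=0\ ,
\]
so this operator equals $Z^{(N)}(0)$ for all $t$. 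By \eqref{Z.exp} and the identity $\wh\fV(0)=\la\fV\ra$ from \eqref{av.W}, this yields $Z^{(N)}(0)=\la\fV\ra+T_N$ with $T_N:=T^{(N)}(0)\in\cA_{-1}$ time independent; it is selfadjoint because $Z^{(N)}(0)$ and $\la\fV\ra$ are. Finally I would put $R_N(t):=e^{\im t K_0}\fV^{(N)}(t)e^{-\im t K_0}$: since $\fV^{(N)}\in C^\infty(\T,\cA_{-N})$ and, by Assumption II (ii), conjugation by $e^{\im\tau K_0}$ depends smoothly on $\tau$ with values in $\cA_{-N}$, the composition is smooth in $t$ with values in $\cA_{-N}$, and it is $2\pi$-periodic because $e^{\im 2\pi K_0}$ is a scalar (Remark \ref{K0.flow.p}); moreover $R_N(t)$ is selfadjoint $\forall t$. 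Altogether $\im\pa_t\phi=(\la\fV\ra+T_N+R_N(t))\phi$, which is \eqref{res.eq}, and the argument is manifestly reversible.

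It remains to record the mapping properties of $\cU_N$. Each $e^{\pm\im X_j(t)}$ is unitary on $\cH$ (as $X_j(t)$ is selfadjoint) and satisfies \eqref{unitary2}, while $e^{\pm\im t K_0}$ is an isometry of every $\cH^r$ by Remark \ref{rem:K0.flow}; composing these facts shows that $\cU_N(t)$ is unitary on $\cH$ and satisfies \eqref{unitary}.

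I do not foresee a genuine difficulty: the substance of the normal form is already contained in Proposition \ref{lem:rpnf2}, and this corollary is essentially the bookkeeping of the last conjugation by $e^{-\im t K_0}$. The only point needing a little care is verifying that $R_N$ stays in $C^\infty(\T,\cA_{-N})$ — i.e. that conjugation by $e^{\im t K_0}$ preserves both the order $-N$ and the $2\pi$-periodicity in $t$ — which is precisely what Assumption II (ii) and Remark \ref{K0.flow.p} are designed to guarantee.
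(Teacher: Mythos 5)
Your proof is correct and matches the paper's argument essentially step for step: apply Proposition \ref{lem:rpnf2}, gauge away $K_0$ by the extra conjugation $\phi = e^{\im t K_0}\vf$, use \eqref{Z.prop2} to show $e^{\im t K_0}Z^{(N)}(t)e^{-\im t K_0}$ is constant and equal to $Z^{(N)}(0) = \la\fV\ra + T^{(N)}(0)$, set $T_N := T^{(N)}(0)$ and $R_N(t) := e^{\im t K_0}\fV^{(N)}(t)e^{-\im t K_0}$, and conclude via Assumption II, Remark \ref{K0.flow.p}, \eqref{unitary2} and Remark \ref{rem:K0.flow}. (Incidentally, your expression $\cU_N(t) = e^{\im t K_0}e^{\im X_N(t)}\cdots e^{\im X_1(t)}$ corrects what looks like a typographical slip in the paper's definition.)
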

\begin{proof}
Fix $N \in \N$ and apply Proposition  \ref{lem:rpnf2} to conjugate  equation \eqref{eq:v} to the form \eqref{eq.res}
via the change of variables \eqref{change.v1}.
Then we  gauge away $K_0$ by  the change of coordinates $\vf = e^{- \im t K_0} \phi$, getting 
$$
\im \pa_t \phi = e^{\im t K_0} \, \big( Z^{(N)}(t) + \fV^{(N)}(t) \big)  \, e^{- \im t K_0} \, \phi .
$$
Define
$$
\fH_N :=  e^{\im t K_0} \,  Z^{(N)}(t)   \, e^{- \im t K_0} , \qquad
R_N(t):=  e^{\im t K_0} \,  \fV^{(N)}(t)   \, e^{- \im t K_0} . 
$$
The operator $R_N\in C^\infty(\T, \cA_{-N})$ by Assumption II since $\fV^{(N)} \in C^\infty(\T, \cA_{-N})$. \\
Let us now prove that $\fH_N$ is time independent.
We know by Lemma \ref{lem:rpnf2} that $Z^{(N)}(t)$ commutes with $\im \pa_t  - K_0$; therefore
$$ 
\pa_t  \big( e^{\im t K_0} \,  Z^{(N)}(t)   \, e^{- \im t K_0} \big) 
= 
e^{\im t K_0} \, \big( \im [K_0,  Z^{(N)}(t)] + \pa_t Z^{(N)}(t) \big)   \, e^{- \im t K_0} = 0 
$$
and we get 
$$
\fH_N = e^{\im t K_0} \,  Z^{(N)}(t)   \, e^{- \im t K_0}\vert_{t = 0}  =  Z^{(N)}(0) \stackrel{\eqref{Z.exp}}{=}
\wh \fV(0) + T^{(N)}(0)  \stackrel{\eqref{av.W}}{=} \la \fV \ra + T^{(N)}(0)  . 
$$
So we put $T_N := T^{(N)}(0)$; clearly it belongs to  $\cA_{-1}$,   it is selfadjoint and time independent.

Finally we put $\cU_N(t) := e^{\im t K_0} \, e^{\im t X_N(t)} \, \cdots e^{\im t X_1(t)}$; estimate \eqref{unitary} follows from \eqref{unitary2} and Remark \ref{rem:K0.flow}.
\end{proof}

Coming  back to the original equation \eqref{eq.ab},  we 
apply Corollary \ref{prop:rpdnf} with  $\fV= V + W \in C^\infty(\T, \cA_{0})$, getting the following result:
\begin{corollary}
\label{cor:res.pseudo}
With the same assumptions of Theorem \ref{thm:ab}, the following holds true. 
 Fix $ N \in \N$ arbitrary. There  exists a change of coordinates 
 $\cU_N(t)$, unitary in $\cH$ and fulfilling \eqref{unitary}
 such that  $\psi(t)$ is a solution of \eqref{eq.ab} if and only if 
 $\phi(t) := \cU_N(t) \psi(t)$ solves 
\begin{equation}
\label{res.eq2}
\im \pa_t \phi = \big( \la V \ra + \la W \ra +  T_N + R_N(t) \big)\phi
\end{equation}
where  $T_N \in \cA_{-1}$ is selfadjoint and  time independent 
whereas  $R_N \in C^\infty(\T, \cA_{-N})$.
\end{corollary}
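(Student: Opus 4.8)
The plan is to obtain this statement as an immediate specialization of Corollary~\ref{prop:rpdnf}, applied with the choice $\fV := V + W$. First I would verify that $\fV$ satisfies the hypotheses of that corollary. By Assumption~III we have $V \in C^\infty(\T, \cA_0)$ with $V(t)$ selfadjoint for every $t$, and by the hypothesis of Theorem~\ref{thm:ab} (see \eqref{normW}) we have $W \in C^\infty(\T, \cA_0)$ with $W(t)$ selfadjoint for every $t$. Since $\cA_0$ is a linear subspace (Assumption~I~(ii)) and sums of smooth maps into a Fr\'echet space are smooth, it follows that $\fV = V + W \in C^\infty(\T, \cA_0)$ and $\fV(t)$ is selfadjoint for every $t$, as required. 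I would note in passing that the smallness encoded in \eqref{normW} plays no role at this stage: it enters only later, in the dynamical part of the proof of Theorem~\ref{thm:ab}.

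Next I would invoke Corollary~\ref{prop:rpdnf} for the fixed $N \in \N$. Observing that \eqref{eq.ab} is precisely equation \eqref{eq:v} with this choice of $\fV$, the corollary produces a change of coordinates $\cU_N(t)$, unitary on $\cH$ and satisfying \eqref{unitary}, such that $\psi(t)$ solves \eqref{eq.ab} if and only if $\phi(t) := \cU_N(t)\psi(t)$ solves
\begin{equation*}
\im \pa_t \phi = \big( \la \fV \ra + T_N + R_N(t) \big) \phi ,
\end{equation*}
with $T_N \in \cA_{-1}$ selfadjoint and time independent and $R_N \in C^\infty(\T, \cA_{-N})$, which are exactly the structural assertions made in the statement; hence nothing further has to be checked about $T_N$ and $R_N$, nor about $\cU_N(t)$.

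It then remains only to identify $\la \fV \ra = \la V \ra + \la W \ra$. This is immediate from the definition \eqref{res.av}: the resonant average is given by an integral of $e^{\im s K_0}\,(\cdot)\,e^{-\im s K_0}$ over $[0,2\pi]$ and is therefore linear in its argument, so $\la V + W \ra = \la V \ra + \la W \ra$. Substituting this into the equation above yields \eqref{res.eq2}.

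I do not expect any genuine obstacle here: the whole content is the specialization of Corollary~\ref{prop:rpdnf} to the pair $V,W$, combined with the trivial additivity of the resonant average. The substantive steps, namely exploiting \eqref{normW}, establishing a Mourre estimate for the time-independent operator $\la V \ra + \la W \ra + T_N$, and deriving the ensuing dispersive/local-energy-decay estimates, belong to the subsequent parts of the argument rather than to this corollary.
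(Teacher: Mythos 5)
Your proposal is correct and coincides exactly with the paper's argument: the paper likewise deduces the corollary by applying Corollary~\ref{prop:rpdnf} with $\fV = V + W$ and then using the linearity of the resonant average to write $\la V + W \ra = \la V \ra + \la W \ra$. You have merely spelled out the same one-line specialization in more detail.
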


\subsection{Local energy decay estimates}
From now on we are going to assume also Assumption III.
In the previous section we have conjugated the original equation \eqref{eq.ab} to the resonant equation \eqref{res.eq2}.
In this section we  consider the effective equation obtained removing $R_N(t)$ from \eqref{res.eq2}, namely 
\begin{equation}
\label{eq.H}
\im \pa_t \vf = H_N \vf, \qquad H_N := \la V \ra + \la W \ra +  T_N,
\end{equation}
with $T_N \in \cA_{-1}$ of Corollary \ref{cor:res.pseudo}.
Note that $H_N$ is selfadjoint by Lemma \ref{prop:av} and Corollary \ref{cor:res.pseudo}.
The goal is to construct a  solution of \eqref{eq.H} with  polynomially in time growing Sobolev norms. 
Actually we will prove the following  slightly stronger result, namely the existence of a solution with {\em decaying  negative Sobolev norms}: 

\begin{proposition}[Decay of negative Sobolev norms]
\label{prop:decay}
With the same assumptions of Theorem \ref{thm:ab}, 
consider 
the  operator $H_N $  in \eqref{eq.H}.
For any  $ k  \in \N$, 
there exist a nontrivial solution $\vf(t) \in  \cH^k$ of \eqref{eq.H}   and   $\forall r \in [0, k]$ a constant $C_r >0$ such that 
\begin{equation}
\label{decay}
 \norm{ \vf(t)}_{{-r}}  \leq C_r \la t \ra^{-r} \,  \norm{\vf(0)}_r  \ , \qquad \forall t \in \R \ . 
\end{equation}
\end{proposition}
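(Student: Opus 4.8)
The plan is to deduce Proposition~\ref{prop:decay} from a dispersive estimate of the form \eqref{dispersive} for the operator $H_N = \la V\ra + \la W\ra + T_N$, together with a duality argument. The key observation is that $\la W\ra + T_N$ is a compact operator (indeed $T_N \in \cA_{-1}$ is compact by Remark~\ref{rem:compact}, and $\la W\ra \in \cA_0$ is small in the appropriate seminorm by \eqref{normW} combined with \eqref{B-est2}), and more importantly it lies in the graded algebra $\cA$; the point is that although compact perturbations can destroy the absolutely continuous spectrum of $\la V\ra$ (Weyl--von Neumann), they cannot destroy a Mourre estimate when they are pseudodifferential of the right order. So the \textbf{first step} is to check that $H_N$ still satisfies a Mourre estimate on some interval $I \subseteq I_0$: using that $A \in \cA_1$ is the conjugate operator for $\la V\ra$ and that $\im[\la W\ra + T_N, A] \in \cA_0$ is a genuine (not merely compact) pseudodifferential operator which is small when $W$ is small, one perturbs \eqref{mourre.ab}. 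After localizing with a slightly smaller cutoff $g_I$ with $g_I \equiv 1$ on $I$ and $g_I g_{I_0} = g_I$, the main term $\theta g_{I_0}^2$ dominates the perturbation for $\epsilon_0$ small, at the cost of shrinking $\theta$ and absorbing the difference of functional calculi (which is itself compact) into the compact remainder $K$. This is where I expect to invoke stability of Mourre estimates under small relatively bounded perturbations from Mourre theory / the references \cite{Mourre, ABG} cited in the paper.

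The \textbf{second step} is to upgrade the Mourre estimate for $H_N$ into a local energy decay / dispersive estimate. By the Mourre-theory machinery (as developed by Sigal--Soffer and followers, \cite{SS, Ski, JenMouPer, GerSig}), a Mourre estimate with a conjugate operator $A \in \cA_1$, plus enough regularity of $H_N$ with respect to $A$ (here automatic from the graded-algebra structure: iterated commutators $\mathrm{ad}_A^j(H_N)$ stay in $\cA$), yields a limiting absorption principle on a subinterval, hence bounds of the form $\norm{\la A\ra^{-s} e^{-\im t H_N} P_c \la A\ra^{-s}}_{\cL(\cH)} \lesssim \la t\ra^{-s}$ for $s \leq k$, where $P_c$ projects onto a subset of the a.c.\ spectral subspace of $H_N$ inside $I$, away from the (finitely many) embedded eigenvalues. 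The final move in this step is to trade the weights $\la A\ra^{-s}$ for the Sobolev weights $K_0^{-s}$: since $A \in \cA_1$ has the same order as $K_0$, one has $K_0^{-s}\la A\ra^{s} \in \cA_0$ bounded on $\cH$, so the estimate $\norm{K_0^{-k} e^{-\im t H_N} P_c \phi}_0 \lesssim \la t\ra^{-k}\norm{K_0^k \phi}_0$ follows. I would state this as a separate lemma (the analogue of \eqref{dispersive} for $H_N$); its proof either cites the literature essentially verbatim or is relegated to an appendix.

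The \textbf{third step} is the duality argument producing the claimed solution. Pick any nonzero $\phi_0 \in P_c \cH^{\infty}$ (nonempty since $P_c$ projects onto a nontrivial a.c.\ subspace and $\cH^\infty$ is dense and $P_c$ preserves the scale up to the algebra structure), and let $\vf(t) := e^{-\im t H_N}\phi_0$, which solves \eqref{eq.H}. For $r \in [0,k]$ and any test vector $\chi \in \cH^r$ we estimate $|\la \vf(t), \chi\ra| = |\la e^{-\im t H_N}\phi_0, \chi\ra|$; writing $\phi_0 = K_0^r (K_0^{-r}\phi_0)$ is not quite the right split, so instead use $\la \vf(t),\chi\ra = \la K_0^{-r}\vf(t), K_0^r\chi\ra$ and apply the dispersive bound of step two to $K_0^{-r}\vf(t) = K_0^{-r} e^{-\im t H_N} P_c \phi_0$, getting $|\la\vf(t),\chi\ra| \leq C_r \la t\ra^{-r}\norm{K_0^r\phi_0}_0 \norm{\chi}_r = C_r\la t\ra^{-r}\norm{\phi_0}_r\norm{\chi}_r$. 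Taking the supremum over $\norm{\chi}_r \leq 1$ and using $\cH^{-r} = (\cH^r)'$ yields $\norm{\vf(t)}_{-r} \leq C_r \la t\ra^{-r}\norm{\vf(0)}_r$, which is \eqref{decay} (after renaming $\phi_0 = \vf(0)$ and noting $\vf(0) \in \cH^k$).

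The \textbf{main obstacle} I anticipate is the first step: carefully verifying that the Mourre estimate is preserved under the perturbation by $\la W\ra + T_N$, since $\la W\ra$ is only small, not smoothing, so one genuinely needs the quantitative form of Mourre-estimate stability (the perturbation of the positive commutator must be controlled in operator norm on the range of the cutoff, and one must ensure the conjugate operator $A$ is still conjugate, i.e.\ that $e^{\im \tau A}$ preserves the form domain and that $\mathrm{ad}_A(H_N)$ extends to a bounded operator — all of which follow from Assumption I but need to be spelled out). A secondary technical point is making precise the class of $P_c$ and checking it maps $\cH^\infty$ to itself (or at least to each $\cH^r$), which one gets because spectral projections of $H_N$ onto intervals avoiding thresholds and eigenvalues can be written via smooth functional calculus and thus lie in a suitable completion compatible with $\cA$; I would handle this by the functional-calculus appendix the paper already references.
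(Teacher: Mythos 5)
Your plan coincides with the paper's own route: first show $H_N = \la V\ra + \la W\ra + T_N$ still fulfills a (strict) Mourre estimate over some subinterval of $I_0$, then derive a Sigal--Soffer local energy decay estimate with weights $\la A\ra^{\pm k}$, and finally trade those for Sobolev weights using $A \in \cA_1$. The paper executes exactly this (Lemma \ref{lem:H.M2}, Corollary \ref{cor:ledeH}, and the proof of Proposition \ref{prop:decay}), with the perturbative Mourre stability carried out in three explicit sub-steps (first upgrading \eqref{mourre.ab} to a \emph{strict} estimate on a smaller interval by shrinking the cutoff until $g_\delta(\la V\ra) K g_\delta(\la V\ra)$ is smaller than $\theta/2$, then handling $\la W\ra$, then $T_N$), and the passage from $\la A\ra$-weights to $K_0$-weights done for even exponents followed by interpolation.

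There is, however, one genuine gap in your proposal: you never verify that $H_N$ actually \emph{has} spectrum inside the interval where the strict Mourre estimate holds. You write ``pick any nonzero $\phi_0 \in P_c\cH^\infty$ (nonempty since $P_c$ projects onto a nontrivial a.c.\ subspace ...)'', but a Mourre estimate with $\fK = 0$ on an interval $I$ only tells you that \emph{if} there is spectrum in $I$, then it is absolutely continuous; it does not imply $\sigma(H_N) \cap I \neq \emptyset$. The assumption only says $\abs{\sigma(\la V\ra) \cap I_0} > 0$, and the perturbation $\la W\ra + T_N$ — which, contrary to your opening sentence, is \emph{not} compact since $\la W\ra \in \cA_0$, a slip you correct later — could in principle push the spectrum out of $I_0$. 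The paper addresses this explicitly: in Step 2 of Lemma \ref{lem:H.M2} a Weyl-sequence argument shows ${\rm dist}(\lambda_0, \sigma(H_{\la W\ra})) \leq C_0 [W]_{M_0}$ so $\sigma(H_{\la W\ra}) \cap I_1 \neq \emptyset$ for $\epsilon_0$ small, and in Step 3 Weyl's theorem gives $\sigma_{ess}(H_N) = \sigma_{ess}(H_{\la W\ra})$ so the compact perturbation $T_N$ cannot empty the interval either. Without this you have a strict Mourre estimate but possibly a vacuous $P_c$, and the proposition would produce only the trivial solution. Your secondary concern (that $g_J(H_N)$ maps $\cH^k$ into itself) is real but handled exactly as you guess, via Helffer--Sj\"ostrand commutator expansion (Lemma \ref{lem:gJ.bd}); and your duality step, while correct, is a detour — since $\norm{\cdot}_{-r} = \norm{K_0^{-r}\cdot}_0$ by definition, the traded-weight dispersive bound already \emph{is} the decay of the negative Sobolev norm.
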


\begin{remark}
\label{rem:g}
 As $H_N$ is selfadjoint,  the conservation of the $\cH^0$-norm and 
Cauchy-Schwartz inequality give 
$$
\norm{\vf(0)}_0^2 = \norm{\vf(t)}_{0}^2 \leq \norm{\vf(t)}_{r} \ \norm{\vf(t)}_{{-r}}  \ , \qquad \forall t \in \R \ ,
$$
so that \eqref{decay} implies the growth of positive Sobolev norms:  
$$\norm{\vf(t)}_r \geq  \frac{1}{C_r} \frac{\norm{\vf(0)}_0^2}{\norm{\vf(0)}_r} \, \la t \ra^{r}  \ , \quad  \forall t \in \R \ . 
$$
\end{remark}

The rest of the section is devoted to the proof of Proposition \ref{prop:decay}.
As we shall see, it  follows  from a {\em local  energy decay estimate} for the operator $H_N$, namely a dispersive  estimate of the form 
\begin{equation}
\label{LEDE0}
\norm{\la A \ra^{-k} \, e^{- \im H_N t} \, g_J(H_N) \,  \vf}_0 \leq C_k \la t \ra^{-k} \norm{\la A \ra^{k} g_J(H_N) \vf}_0 \ , \qquad \forall t \in \R 
\end{equation}
where  $A \in \cA_1$,  $J \subset I_0$ is an interval  and  $g_J \in C^\infty_c(\R, \R_{\geq 0})$ with  $g_J \equiv 1$ on $J$. 

\begin{remark}
\label{rem:inf.many}
Actually estimate \eqref{LEDE0} show the existence of infinitely many solutions of \eqref{eq.H} with decaying negative Sobolev norms. In particular  this happens to any solution whose  initial datum $\vf(0)$ belongs to the  (infinite dimensional) set  ${\rm Ran }\, E_J(H_N)$, where $E_J(H_N)$ is the spectral projection of $H_N$ corresponding to the interval $J$.
\end{remark}

A possible approach  (which we will follow here) to obtain such estimate is  via  Sigal-Soffer minimal velocity estimates  \cite{Ski, GerSig, JenMouPer,HunSigSof, GNRS, Arbunich}. 
These estimates are based on  Mourre theory, let us recall this last one.

\paragraph{Mourre theory.} Let $\fH$ be a selfadjoint operator on the Hilbert space $\cH$, and denote by $\sigma(\fH)$ its spectrum. 
We further   denote by $\sigma_d(\fH)$ its discrete spectrum, $\sigma_{ess}(\fH)$  its essential spectrum, $\sigma_{pp}(\fH)$  its pure point spectrum, $\sigma_{ac}(\fH)$  its absolutely continuous spectrum and $\sigma_{sc}(\fH)$  its singular spectrum; 
 see e.g. \cite{ReedSimon} pag. 236 and  231 for their definitions. 
 Furthermore we denote by $E_\Omega(\fH)$  the spectral projection of $\fH$ corresponding to the Borel set $\Omega$ and by $m_\vf(\Omega):= \la E_{\Omega}(\fH) \vf, \vf\ra$ the spectral measure associated to $\vf \in \cH$.

Assume a selfadjoint operator  $\fA$ can be found such that 
$D(\fA)\cap \cH$ is dense in $\cH$. 
%In the following, given an open interval  $I$  with compact closure, we denote by  $g_I \in C^\infty_c(\R, \R_{\geq 0})$ a function verifying  $g_I \equiv 1$ on $I$.
We put
\begin{equation}
\label{adjoint}
{\rm ad}^0_\fA(\fH) := \fH , 
\qquad {\rm ad}_\fA(\fH):= [\fH, \fA] , 
\qquad {\rm ad}^n_\fA(\fH) := [ {\rm ad}^{n-1}_\fA(\fH) , \fA] , \quad \forall n \geq 2 \ . 
\end{equation}
Consider the following properties:
\begin{itemize}
\item[(M1)] \label{M1} For some $\tN \geq 1$, the operators 	${\rm ad}^n_\fA(\fH)$ with $ n = 1, \ldots, \tN$,
can all be extended to bounded operators on $\cH$.
\item[(M2)] {\em Mourre estimate}: there exists an open interval $I\subset \R$ with compact closure and a function
$g_I \in C^\infty_c(\R, \R_{\geq 0})$ with  $g_I \equiv 1$ on $I$ such that 
\begin{equation}
\label{M2}
g_I(\fH) \, \im [\fH, \fA] \, g_I(\fH)  \geq \theta g_I(\fH)^2  + \fK
\end{equation}
for some $\theta >0$ and $\fK$ a selfadjoint compact operator on $\cH$.
\end{itemize}
If the estimate \eqref{M2} holds true with $\fK = 0$, we shall say that $\fH$ fulfills a {\em strict} Mourre estimate.\\
 Mourre theorem \cite{Mourre} 
says the following:
\begin{theorem}[Mourre] 
\label{thm:mourre}
Assume conditions   {\rm (M1)} -- {\rm (M2)} with $\tN = 2$.  In the interval $I$, the operator $\fH$ can have only absolutely continuous spectrum and finitely many eigenvalues of finite multiplicity. 
%Moreover the operator	
%$$
%\la \fA \ra^{-1}  \, g_I(\fH) \, (\fH - z)^{-1} \, \la \fA \ra^{-1}
%$$
%is uniformly bounded in $z$, as an operator on $\cH$.
 If $\fK= 0$, there are no eigenvalues  in the interval $I$, i.e.  $\sigma(\fH) \cap I = \sigma_{ac}(\fH) \cap I$.
\end{theorem}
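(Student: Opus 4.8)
The plan is to follow Mourre's conjugate operator method \cite{Mourre}, in three moves: a virial-type bound controlling the eigenvalues of $\fH$ in $I$, a reduction to a \emph{strict} Mourre estimate on eigenvalue-free subintervals, and a limiting absorption principle on such subintervals, from which the absolute continuity of the remaining spectrum follows. First I would dispose of the eigenvalues. By (M1) (already with $\tN = 1$) the virial theorem applies, so if $\fH\psi = \mu\psi$ then $\la\psi, \im[\fH,\fA]\psi\ra = 0$; if moreover $\mu\in I$ then $g_I(\fH)\psi = \psi$, and pairing \eqref{M2} with $\psi$ gives $0 \geq \theta\norm{\psi}^2 + \la\psi,\fK\psi\ra$, i.e. $-\fK \geq \theta\,\uno$ on the linear span of all eigenfunctions of $\fH$ with eigenvalue in $I$. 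Compactness of $\fK$ forces that span to be finite-dimensional, so $\fH$ has only finitely many eigenvalues in $I$, each of finite multiplicity; and if $\fK = 0$ the inequality reads $0 \geq \theta\norm{\psi}^2$, so $\psi = 0$ and $\fH$ has no eigenvalues in $I$ at all.

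Next I would upgrade \eqref{M2} to a strict estimate away from those eigenvalues. Let $\mu_1 < \dots < \mu_\ell$ be the eigenvalues just found and fix an arbitrary compact subinterval $I' \subset I$ disjoint from $\{\mu_1,\dots,\mu_\ell\}$; choosing $g \in C^\infty_c(\R,\R_{\geq 0})$ with $g \equiv 1$ on $I'$, $\operatorname{supp} g \subset I$, and $\operatorname{supp} g$ disjoint from the $\mu_j$, the operator $g(\fH)$ tends to $0$ strongly as $\operatorname{supp} g$ shrinks toward $I'$, whence $g(\fH)\,\fK\,g(\fH) \to 0$ in operator norm since $\fK$ is compact; therefore for a sufficiently narrow $g$ the compact error in \eqref{M2} is absorbed and $g(\fH)\,\im[\fH,\fA]\,g(\fH) \geq \tfrac{\theta}{2}\, g(\fH)^2$.

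On $I'$ I would then prove, for $s \in (\tfrac12, 1]$, the a priori resolvent bound
\[
\sup\big\{\, \norm{\la\fA\ra^{-s}(\fH - z)^{-1}\la\fA\ra^{-s}} \ :\ \Re z \in I',\ 0 < \Im z < 1 \,\big\} < \infty ,
\]
together with H\"older continuity of $z \mapsto \la\fA\ra^{-s}(\fH - z)^{-1}\la\fA\ra^{-s}$ up to the real axis, locally on $I'$, by the classical route of the conjugate operator method: set $G(z) := g(\fH)(\fH - z)^{-1}g(\fH)$ (with $\fA$ suitably regularized by a bounded approximation so the following computation is legitimate), derive a differential inequality for the weighted norm $\norm{\la\fA\ra^{-s}G(z)\la\fA\ra^{-s}}$ in an auxiliary parameter whose drift is controlled, via the strict Mourre estimate, by a coercive term plus lower-order contributions bounded using (M1) with $\tN = 2$ (these carry $[[\fH,\fA],\fA]$), close it by Gronwall's lemma, and then send the regularization and $\Im z$ to $0$. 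By Stone's formula this makes the spectral measure $m_\psi$ of every $\psi$ in the dense set $\operatorname{Ran}\la\fA\ra^{-s}$ absolutely continuous on $I'$, with bounded density $\tfrac{1}{\pi}\Im\la\psi, (\fH - \lambda - \im 0)^{-1}\psi\ra$; hence the restriction of the spectral measure of $\fH$ to $I'$ is purely absolutely continuous. As $I' \subset I$ was an arbitrary eigenvalue-free compact subinterval, this gives $\sigma_{sc}(\fH) \cap I = \emptyset$ and shows that the only non-absolutely-continuous spectrum inside $I$ is the finite set of finite-multiplicity eigenvalues of the first step; when $\fK = 0$ these are absent, so $\sigma(\fH) \cap I = \sigma_{ac}(\fH) \cap I$.

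The step I expect to be the main obstacle is this a priori resolvent bound: making the regularization and the differential inequality rigorous — the domain issues attached to the unbounded conjugate operator $\fA$, the claim that the regularized resolvent maps into $D(\fA)$, and the correct order of the limits (remove the regularization first, then let $\Im z \to 0^+$) — is the delicate technical point, and it is precisely there that the second-commutator hypothesis $\tN = 2$ and the strictness of the Mourre estimate enter.
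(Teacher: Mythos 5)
The paper does not prove this theorem in-house; it is quoted from the literature, with explicit pointers to Bach--Fr\"ohlich--Sigal--Soffer (Lemma~5.6 of \cite{Bach99}) and Cycon--Froese--Kirsch--Simon (\cite{CFKS}, Theorems~4.7--4.9). Your outline follows the same three-part conjugate-operator route used in those sources (virial theorem for the point spectrum, localization to a strict Mourre estimate, limiting absorption principle and Stone's formula), so the plan itself is the right one.

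There is, however, a concrete error in your second step that would break the argument as written. You fix a compact subinterval $I'\subset I$ avoiding the eigenvalues, take $g\equiv 1$ on $I'$ with $\operatorname{supp} g$ shrinking toward $I'$, and assert that $g(\fH)\to 0$ strongly. That is false whenever $I'$ has positive length: in that limit $g(\fH)\to E_{\bar{I'}}(\fH)$ strongly, which is nonzero precisely in the case of interest, namely when $\sigma(\fH)\cap\bar{I'}\neq\emptyset$. Consequently $g(\fH)\,\fK\, g(\fH)$ converges in operator norm to $E_{\bar{I'}}(\fH)\,\fK\, E_{\bar{I'}}(\fH)$, which is just another compact operator of no particular smallness, so the compact error in \eqref{M2} is not absorbed. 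The standard repair is pointwise: pick any $\lambda\in I$ that is not an eigenvalue and take $g_\delta$ supported in $(\lambda-\delta,\lambda+\delta)$; since $E_{\{\lambda\}}(\fH)=0$, one has $g_\delta(\fH)\to 0$ strongly, hence $g_\delta(\fH)\,\fK\, g_\delta(\fH)\to 0$ in norm (this is exactly where the absence of an eigenvalue at $\lambda$ enters), yielding a strict Mourre estimate on a small interval about $\lambda$. One then runs the limiting-absorption argument on each such interval and covers the eigenvalue-free part of $I$ by finitely many of them on each compact piece. This is precisely the maneuver carried out correctly in Step~1 of the proof of Lemma~\ref{lem:H.M2}, where the estimate is localized around a single point $\lambda_0$ and $g_\delta(H_0)\to 0$ is obtained from continuity of the spectral measure there. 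A smaller remark: the virial identity $\la\psi,\im[\fH,\fA]\psi\ra=0$ for an eigenvector $\psi$ is not an unconditional consequence of $\fH\psi=\mu\psi$; it needs $\psi\in D(\fA)$ or a regularization of $\fA$ (the $C^1(\fA)$ framework). Under (M1) this is routine, but it should at least be flagged.
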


\begin{remark}
The version stated here of Mourre theorem is taken from \cite[Lemma 5.6]{Bach99} and   \cite[Theorem 4.7 -- 4.9]{CFKS}, and it has slightly weaker assumptions compared to \cite{Mourre}.
\end{remark}

\begin{remark}
Mourre theorem guarantees that $\sigma_{sc}(\fH)\cap I = \emptyset$ and, in case $\fK = 0$, $\sigma_{pp}(\fH) \cap I = \emptyset$.
However it does not guarantee that  $\sigma(\fH) \cap I \neq \emptyset$;  in our case we shall verify this property explicitly. 
\end{remark}

The key point is that if  $H_N$ fulfills  a {\em strict} Mourre estimate (namely with $\fK = 0$)  
then  one can prove  a  local energy decay estimate like \eqref{LEDE0} for the Schr\"odinger  flow of $H_N$. This is a quite general fact which follows exploiting minimal velocity estimates \cite{HunSigSof} and we prove it for completeness in  Appendix \ref{app:minimal}. 

So the next goal is to prove  that  $H_N$ satisfies a strict Mourre estimate over a certain interval $J \subset I_0$. 
During the proof we will use some standard results from functional calculus; we recall them in Appendix \ref{app:fc}. 
We shall also use the following lemma:
\begin{lemma}
\label{lem:ac.meas}
Let $\fH\in \cL(\cH)$ be selfadjoint. If $\lambda \in \sigma_{ac}(\fH)$, then  $\forall \delta >0$ one has
$$
\abs{[\lambda- \delta, \lambda + \delta] \cap \sigma(\fH)}>0 \ . 
$$
\end{lemma}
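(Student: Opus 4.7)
The plan is to exploit the defining property of the absolutely continuous spectrum, namely that it is the spectrum of the restriction of $\fH$ to the absolutely continuous subspace $\cH_{ac}$, together with the standard fact that the spectral measures $m_\vf$ associated to vectors in $\cH_{ac}$ are, by definition, absolutely continuous with respect to Lebesgue measure. The strategy is to pick a test vector adapted to the interval $[\lambda-\delta,\lambda+\delta]$, compute its spectral mass and then read off positivity of the Lebesgue measure of the spectrum in that interval.

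First, I would unfold the hypothesis: $\lambda \in \sigma_{ac}(\fH) = \sigma(\fH\vert_{\cH_{ac}})$ means that for every $\delta>0$ the spectral projector $E_{[\lambda-\delta,\lambda+\delta]}(\fH)$ is nonzero on $\cH_{ac}$. Hence one can choose a unit vector $\vf \in \cH_{ac}$ such that
$$
\vf \neq 0 \qquad \text{and} \qquad \vf = E_{[\lambda-\delta,\lambda+\delta]}(\fH)\vf.
$$
This immediately gives $m_\vf\bigl([\lambda-\delta,\lambda+\delta]\bigr) = \norm{\vf}_0^2 > 0$.

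Next, I would use the characterization of $\cH_{ac}$: since $\vf \in \cH_{ac}$, the spectral measure $m_\vf$ is absolutely continuous with respect to Lebesgue measure, so it can be written as $dm_\vf = \rho\,d\xi$ with $\rho \in L^1(\R,\R_{\geq 0})$. In addition, $m_\vf$ is always supported in the spectrum $\sigma(\fH)$ (because $E_{\R\setminus\sigma(\fH)}(\fH)=0$). Therefore
$$
0 < m_\vf\bigl([\lambda-\delta,\lambda+\delta]\bigr) = \int_{[\lambda-\delta,\lambda+\delta]\cap \sigma(\fH)} \rho(\xi)\,d\xi.
$$
If the Lebesgue measure $\abs{[\lambda-\delta,\lambda+\delta]\cap\sigma(\fH)}$ were zero, the right-hand side would vanish, a contradiction. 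This yields $\abs{[\lambda-\delta,\lambda+\delta]\cap\sigma(\fH)}>0$, as required.

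There is no real obstacle here: the whole argument is a clean bookkeeping between three notions (spectral projection, absolute continuity of the spectral measure, support in $\sigma(\fH)$). The only point worth being careful about is the initial choice of $\vf$, for which one must invoke that the restriction $\fH\vert_{\cH_{ac}}$ has $\lambda$ in its spectrum and hence a nontrivial spectral projector on every open neighborhood of $\lambda$; everything else is a one-line application of absolute continuity.
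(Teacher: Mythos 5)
Your proof is correct and follows essentially the same path as the paper's: both pick a vector in the absolutely continuous subspace whose spectral projection onto the interval is nonzero, and both use absolute continuity of the spectral measure together with its support on $\sigma(\fH)$ to conclude; the paper frames it as a contradiction argument whereas you run it forward, but this is purely a matter of presentation.
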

\begin{proof}
By contradiction, assume that $\exists \delta_0 >0$ such that 
$\abs{[\lambda- \delta_0, \lambda + \delta_0] \cap \sigma(\fH)} = 0$.
As $\lambda \in \sigma_{ac}(\fH)$, there exists $f \in \cH$ such that
$E_{[\lambda- \delta_0, \lambda + \delta_0] }(\fH) f \neq 0$ and the  spectral measure $m_f = \la E(\fH) f, f \ra$ is absolutely continuous.
Then
$$
 0 = m_f([\lambda- \delta_0, \lambda + \delta_0]) = 
 \la E_{[\lambda- \delta_0, \lambda + \delta_0] }(\fH) f, f \ra = \norm{E_{[\lambda- \delta_0, \lambda + \delta_0] }(\fH) f}_0^2 >0 
$$
giving a contradiction.
\end{proof}

\begin{lemma}
\label{lem:H.M2}
There exist $\epsilon_0, \tM >0$ such that, provided   $W$ fulfills \eqref{normW}, 
the following holds true:
\begin{itemize}
\item[(i)] 
There exists an interval $I \subset I_0$ such that
$\abs{ I \cap \sigma(H_N)} > 0$. 
\item[(ii)] $H_N$ fulfills a strict Mourre estimate over $I$: there exists   a function  $g_I \in C^\infty_c(\R, \R_{\geq 0})$ with ${\rm supp }\, g_I \subset I_0$,  $ g_I \equiv 1 $ on $I$, and $\theta' >0$ such that 
\begin{equation}
\label{pm}
g_I(H_N) \, \im [H_N, A] \, g_I(H_N) \geq \theta' g_I(H_N)^2   \ . 
\end{equation}
\end{itemize}
 Here $I_0$ is the interval and $A$ is the operator of Assumption {\rm III}.
\end{lemma}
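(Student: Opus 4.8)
The plan is to prove the two claims through four successive moves: first, boundedness of all iterated commutators of $H_N$ with $A$; second, a (non‑strict) Mourre estimate for $H_N$ over some sub‑interval $J$ of $I_0$; third, the positive–measure statement (i) together with the existence of a point $\lambda_0\in\sigma_{ac}(H_N)\cap J$ which is not an eigenvalue; and fourth, the strict estimate (ii) by localising around $\lambda_0$. I would begin with the easy bookkeeping. Since $H_N=\la V\ra+\la W\ra+T_N\in\cA_0$, $A\in\cA_1$ and $\cA$ is a graded Lie algebra, ${\rm ad}^n_A(H_N)\in\cA_0\subset\cL(\cH)$ for every $n\geq1$, so condition {\rm (M1)} of Mourre's theorem holds for every $\tN$; in particular $\im[\la V\ra+T_N,A]\in\cA_0$ is bounded, which will be used below. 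I would then fix $\tM$ large enough that, by Lemma~\ref{prop:av}(ii) applied to $W$ together with \eqref{est.1}--\eqref{est.3}, the smallness hypothesis \eqref{normW} forces $\|\la W\ra\|_{\cL(\cH)}+\|\im[\la W\ra,A]\|_{\cL(\cH)}\leq C\epsilon_0$.

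Next I would establish a Mourre estimate (with a compact remainder) for $H_N$, proceeding in two steps. For the compact part: $T_N$ and $[T_N,A]$ both lie in $\cA_{-1}$, hence are compact (Remark~\ref{rem:compact}); by the Helffer--Sj\"ostrand functional calculus $g_{I_0}(\la V\ra+T_N)-g_{I_0}(\la V\ra)$ is compact, so both sides of \eqref{mourre.ab} change only by a compact operator, and $\la V\ra+T_N$ still fulfils a Mourre estimate over $I_0$ (with a new compact remainder). For the small bounded part $\la W\ra$: I would run the standard perturbative argument, expanding $g_J(H_N)\,\im[H_N,A]\,g_J(H_N)$, using $\|g_J(H_N)-g_J(\la V\ra+T_N)\|\leq C\epsilon_0$, boundedness of $\im[\la V\ra+T_N,A]$, and an auxiliary cutoff supported in $I_0$ to absorb the $O(\epsilon_0)\,{\rm Id}$ cross‑terms; for $\epsilon_0$ small this yields a Mourre estimate for $H_N$ over any fixed interval $J$ with $\bar J\subset I_0$, with constant $\theta_1>0$ and compact remainder $\fK_1$. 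By Mourre's theorem (Theorem~\ref{thm:mourre}), $\sigma(H_N)\cap J$ is purely absolutely continuous apart from finitely many eigenvalues of finite multiplicity.

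For (i), I would use that $\sigma_{ess}$ is stable under compact perturbations: $\sigma_{ess}(\la V\ra+T_N)=\sigma_{ess}(\la V\ra)$ by Weyl's theorem, and since $\sigma(\la V\ra)\setminus\sigma_{ess}(\la V\ra)$ is countable, Assumption~III(i) gives $|\sigma_{ess}(\la V\ra+T_N)\cap I_0|=|\sigma(\la V\ra)\cap I_0|>0$. Picking $\mu_*$ in this set and in the interior of $I_0$, one has $\dim{\rm Ran}\,E_{(\mu_*-\delta,\mu_*+\delta)}(\la V\ra+T_N)=\infty$ for small $\delta$, and for $u$ in the range of $E_{(\mu_*-\delta/2,\mu_*+\delta/2)}(\la V\ra+T_N)$ the bound $\|(H_N-\mu_*)u\|\leq(\delta/2+\|\la W\ra\|)\|u\|<\delta\|u\|$ holds once $\epsilon_0$ is small relative to $\delta$; hence $\dim{\rm Ran}\,E_{(\mu_*-\delta,\mu_*+\delta)}(H_N)=\infty$, so $\sigma(H_N)\cap(\mu_*-\delta,\mu_*+\delta)$ is infinite. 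Choosing $\delta$ small enough that $(\mu_*-\delta,\mu_*+\delta)\subset J$ and invoking the spectral structure of $H_N$ in $J$, there is $\lambda_0\in\sigma_{ac}(H_N)\cap(\mu_*-\delta,\mu_*+\delta)$ which is not an eigenvalue of $H_N$; Lemma~\ref{lem:ac.meas} then gives $|\sigma(H_N)\cap I|>0$ for $I:=(\lambda_0-\delta',\lambda_0+\delta')$, and for $\delta'$ small one has $I\subset J\subset I_0$ and $I$ eigenvalue‑free, which proves (i). For (ii), I would take $g_J\equiv1$ on $J$ and $g_I\in C^\infty_c(\R,\R_{\geq0})$ with $g_I\equiv1$ on $I$ and ${\rm supp}\,g_I\subset J\subset I_0$; since $g_I(H_N)g_J(H_N)=g_I(H_N)$, sandwiching the Mourre estimate over $J$ yields $g_I(H_N)\,\im[H_N,A]\,g_I(H_N)\geq\theta_1 g_I(H_N)^2+g_I(H_N)\fK_1 g_I(H_N)$. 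Because $I$ contains no eigenvalue of $H_N$, letting $\delta'\to0$ gives $g_I(H_N)\to E_{\{\lambda_0\}}(H_N)=0$ strongly, so $\fK_1$ compact forces $g_I(H_N)\fK_1 g_I(H_N)\to0$ in operator norm; a further two‑cutoff localisation bounds this term below by $-\tfrac{\theta_1}{2}g_I(H_N)^2$ for $\delta'$ small, giving \eqref{pm} with $\theta'=\theta_1/2$.

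The main obstacle I expect is the stability of the Mourre estimate under the bounded perturbation $\la W\ra$: although "Mourre estimates are stable under small perturbations" is folklore, making it rigorous here requires extracting from the abstract algebra the $\cL(\cH)$‑bounds on $\la W\ra$ and $\im[\la W\ra,A]$, crucially using the boundedness of $\im[\la V\ra+T_N,A]$ (which is precisely where the graded Lie‑algebra axiom enters), and then disposing of the non‑sandwiched $O(\epsilon_0)\,{\rm Id}$ error terms in the commutator expansion by localising with an auxiliary cutoff whose support still lies in $I_0$ — this is exactly why the estimate is only claimed on an interval $J$ with $\bar J\subset I_0$ rather than on $I_0$ itself. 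A secondary, purely bookkeeping difficulty is to nest the shrinking intervals $J\supset(\mu_*-\delta,\mu_*+\delta)\supset I$ consistently, which forces one to choose $\mu_*$ at a fixed positive distance from $\partial I_0$ before fixing $\epsilon_0$ small.
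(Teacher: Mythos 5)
Your proof is correct, but the order in which you handle the three summands of $H_N=\la V\ra+\la W\ra+T_N$ differs from the paper's, and this is worth noting. The paper first works purely with $\la V\ra$: by shrinking inside $I_0$ to a point $\lambda_0\in\sigma_{ac}(\la V\ra)\setminus\sigma_{pp}(\la V\ra)$ and exploiting $g_\delta(\la V\ra)K\to 0$ in norm, it obtains a \emph{strict} Mourre estimate for $\la V\ra$ on a subinterval $I_1$ \emph{before} any perturbation enters; it then adds the small bounded piece $\la W\ra$, which preserves strictness because only $O(\epsilon_0)\,g_{I_2}^2$ errors appear; only at the very end does it add the compact piece $T_N$, which reintroduces a compact remainder, removed by a second shrinking. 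You instead fold $T_N$ into $\la V\ra$ first (Mourre with compact remainder on $I_0$), then perturb by $\la W\ra$ (Mourre with a new compact remainder on a smaller $J$), and perform a single shrinking at the end to kill the accumulated remainder; for part (i) you replace the paper's Weyl-sequence argument (showing $\sigma(\la V\ra+\la W\ra)\cap I_1\neq\emptyset$) by a spectral-projection dimension bound combined with $\sigma_{ess}(\la V\ra+T_N)=\sigma_{ess}(\la V\ra)$. The two orderings buy slightly different bookkeeping: the paper's choice means the small-perturbation step is purely a "strict Mourre $\Rightarrow$ strict Mourre" argument with no compact term to track, while yours requires carrying the compact remainder through the $\la W\ra$-perturbation but saves one localization. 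Both are sound; the one non-cosmetic precaution in your route, which you correctly flag, is that $\mu_*$ must be chosen in $\sigma_{ess}(\la V\ra)\cap I_0$ at a fixed distance from $\partial I_0$ \emph{before} fixing $\delta$ and $\epsilon_0$ (which you can do because $\sigma_{ess}(\la V\ra+T_N)=\sigma_{ess}(\la V\ra)$ is independent of $W$ and $N$); otherwise the choice of the threshold $\epsilon_0$ would circularly depend on $T_N$, hence on $W$.
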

\begin{proof}
During the proof we shall often use that  for $\fA, \fB, \fC \in \cL(\cH)$ and selfadjoints 
\begin{equation}
\label{op.ineq}
\begin{aligned}
&\fA \leq \fB \ \ \Rightarrow \ \  \fC \fA \fC \leq \fC \fB \fC , \qquad 
\norm{\fA}_{\cL(\cH)} \leq a \ \  \Rightarrow \ \  - a \leq \fA \leq a  \ . 
\end{aligned}
\end{equation}
 To shorten notation,   throughout the proof  we shall put
 $$H_0 := \la V \ra \ . $$
We split the proof in several steps.

\noindent {\sc Step 1:} By  Assumption III, 
$H_0$ fulfills a Mourre estimate over the interval $I_0$. The first step of the proof is to exhibit a subinterval  $I_1 \subset I_0$ containing only absolutely continuous spectrum of $H_0$, namely 
\begin{equation}
\label{I1}
\sigma(H_0) \cap I_1 = \sigma_{ac}(H_0) \cap I_1 \ ,  \qquad 
\abs{\sigma(H_0) \cap I_1}>0   \ , 
\end{equation}
and 
 over which $H_0$ fulfills a {strict} Mourre estimate:
 $\exists g_{I_1}\in C^\infty_c(\R, \R_{\geq 0})$, $g_{I_1} \equiv 1$ on $I_1$, ${\rm supp }\, g_{I_1} \subset I_0$, such that 
 \begin{equation}
\label{pm.s100}
g_{I_1}(H_0) \, \im [H_0, A] \, g_{I_1}(H_0)  \geq \frac{\theta}{2} g_{I_1}(H_0)^2    \ . 
\end{equation}
To prove this claim, first apply Mourre theorem 
to $H_0$ (note that    (M1) and (M2) are verified $\forall \tN 	\in \N$  by symbolic calculus and Assumption III), getting that   $\sigma(H_0) \cap I_0$ contains only  finitely many eigenvalues with finite multiplicity and absolutely continuous spectrum. 
In particular $|\bar{\sigma_{pp}(H_0)}\cap I_0|=0$ and by Assumption III $(i)$ it follows that $|\sigma_{ac}(H_0) \cap I_0| = |\sigma(H_0) \cap I_0|>0$. \\
So we take $\lambda_0 \in I_0 \cap (\sigma_{ac}(H_0) \setminus \sigma_{pp}(H_0))$ and a sufficiently small interval 
 $ I_1(\bar \delta) := (\lambda_0 - \bar\delta, \lambda_0 + \bar\delta) \subset I_0$,  $\bar \delta >0$, which does not contain  eigenvalues of $H_0$; this   is possible as the eigenvalues of $H_0$ in $I_0$ are finite.
Moreover  by Lemma \ref{lem:ac.meas},   $\abs{\sigma(H_0)\cap I_1( \delta)}>0$ for any $\delta >0$. 
Now take $\delta \in (0, \bar \delta)$ and a function  $g_{\delta} \in C^\infty_c(\R, \R_{\geq 0})$ with ${\rm supp } \, g_{\delta} \subset I_1(\delta)$ and $g_{\delta} = 1$ on $I_1(\frac{\delta}{2})$.
We claim that provided   $\delta \in (0, \bar \delta)$ is  sufficiently small 
\begin{equation}
\label{pm.s90}
\norm{g_{\delta}(H_0) Kg_{\delta}(H_0)}_{\cL(\cH)} \leq \frac{\theta}{2} 
\ ,
\end{equation}
where $\theta>0$ is the one of  Assumption III.
Indeed in   $I_1(\bar \delta)$ the spectrum of $H_0$ is absolutely continuous; this means that  $\forall \vf \in \cH$, the vector $\vf':= E_{I_1(\bar\delta)}(H_0) \vf$ belongs to the absolutely continuous subspace of $H_0$, namely its  spectral measure $m_{\vf'}$  is absolutely continuous w.r.t. the Lebesgue measure.
Now, since for any $\vf \in \cH$ one has by functional calculus  $g_\delta(H_0)=g_\delta(H_0) E_{I_1(\bar\delta)}(H_0)$, one has that 
$$
\norm{g_\delta(H_0) \vf}_0^2 = \norm{g_\delta(H_0) E_{I_1(\bar\delta)}(H_0) \vf}_0^2 = \int_\R g_\delta(\lambda)^2 \, \di m_{\vf'}(\lambda)  \to 0 \ \ \ \mbox{ as } \delta \to 0 
$$
by Lebesgue dominated convergence theorem.
In particular  $g_\delta(H_0)\to 0$ strongly  as $\delta \to 0$ and then, being  $K$  compact,
$g_\delta(H_0) K \to 0$ uniformly as $\delta \to 0$ (see e.g. \cite{Anselone}).
Therefore for  $ \delta \in (0, \bar \delta)$ sufficiently small   \eqref{pm.s90} holds true.\\
Using the assumption \eqref{mourre.ab},  \eqref{pm.s90} and  \eqref{op.ineq} we deduce that 
\begin{align*}
g_{\delta}(H_0)\, g_{I_0}(H_0) \, \im [H_0, A] \, g_{I_0}(H_0) \, g_{\delta}(H_0) 
%& \geq \theta g_\delta(H_0) \, g_{I_0}(H_0)^2 \, g_\delta(H_0)  + g_\delta(H_0)\, K \, g_\delta(H_0) \\
& \geq  \theta g_\delta(H_0) \, g_{I_0}(H_0)^2 \, g_\delta(H_0)  - \frac{\theta}{2} \ ;
\end{align*}
next apply $g_\frac{\delta}{2}(H_0)$ to the right and left of the previous inequality, 
use again  \eqref{op.ineq} and the identity  $ g_{I_0}(H_0) \, g_\delta(H_0) \, 
g_{\frac{\delta}{2}}(H_0) = g_{\frac{\delta}{2}}(H_0) $ (which follows from  $g_{I_0} \, g_\delta\, 
g_{\frac{\delta}{2}} = g_{\frac{\delta}{2}} $),
to  get 
the strict Mourre estimate 
\begin{equation}
\label{pm.s101}
g_{I_1}(H_0) \, \im [H_0, A] \, g_{I_1}(H_0)  \geq \frac{\theta}{2} g_{I_1}(H_0)^2   
\end{equation}
where  $I_1:= I_1(\frac{\delta}{4})$ and $g_{I_1}:= g_{\frac{\delta}{2}}$ fulfills $g_{I_1} \equiv 1$ on $I_1$, ${\rm supp }\, g_{I_1} \subset I_1(\frac{\delta}{2})$.
Clearly $I_1$ fulfills \eqref{I1}.

\noindent {\sc Step 2:} We shall  prove that the selfadjoint operator
$$
H_{\la W \ra} := H_0 + \la W \ra 
$$
has a nontrivial spectrum in a subinterval $I_2  \subseteq I_1$, and over this interval it fulfills the  strict  Mourre estimate
\begin{equation}
\label{pm.s200}
g_{I_2}\big( H_{\la W \ra}\big)  \, \im [H_{\la W \ra}, A] \, g_{I_2}\big(H_{\la W \ra} \big) \geq \frac{\theta}{4} 
g_{I_2}\big( H_{\la W \ra}\big)^2 \ 
\end{equation}
for any $g_{I_2} \in C^\infty_c(\R, \R_{\geq 0})$ with ${\rm supp }\, g_{I_2} \subset I_1$, $g_{I_2} \equiv 1$ on $I_2$. 
To prove this, we exploit that  $\la W \ra \in \cA_0 $ is a small bounded perturbation of $H_0$, fulfilling, by \eqref{est.1}, \eqref{B-est2}
\begin{equation}
\label{pm.s20}
\exists M_0 \in \N , C_0 >0 \colon 
\qquad \norm{\la W \ra}_{\cL(\cH)} {\leq}
\, C_0 [W]_{M_0}  ,
\end{equation}
where we denoted
$$
[W]_{M} := \sup_{t \in \T} \, \wp^0_{M}(W(t)) \ . 
$$
First let us prove that $\sigma(H_{\la W \ra})\cap I_1 \neq \emptyset$.
Take again the same  $\lambda_0 \in \sigma(H_0) \cap I_1$ as in the previous step.
We claim that 
\begin{equation}
\label{pm.s21}
{\rm dist }\big(\lambda_0, \sigma(H_{\la W \ra})\big) \leq  C_0 \,  [W]_{M_0} . 
\end{equation}
If  $\lambda_0 \in \sigma(H_{\la W \ra})$ this is trivial. 
So assume that  $\lambda_0 $ belongs to the resolvent set of $ H_{\la W \ra}$.  As $\lambda_0 \in \sigma(H_0)$, 
by Weyl criterion   $\exists (f_n)_{n \geq 1} \in \cH$ with $\norm{f_n}_0 = 1$  such that
$\norm{(H_0 - \lambda_0) f_n}_0 \to 0$ as $n \to \infty$. Then  $\forall n \geq 1$ 
\begin{align*}
1 & = \norm{f_n}_0 = \norm{(H_{\la W \ra} - \lambda_0)^{-1} \, 
(H_{\la W \ra} - \lambda_0) f_n }_0 \leq \frac{1}{{\rm dist }\big(\lambda_0, \sigma(H_{\la W \ra})\big) } \norm{(H_{\la W \ra} - \lambda_0) f_n }_0\\
& \stackrel{\eqref{pm.s20}}{\leq } \frac{1}{{\rm dist }\big(\lambda_0, \sigma(H_{\la W \ra})\big) } \Big(  \norm{(H_0 - \lambda_0) f_n }_0 +   C_0 [W]_{M_0} \Big) 
\end{align*}
which proves \eqref{pm.s21} passing to the limit $n \to \infty$.
 Then, provided  $ [W]_{M_0}$ is sufficiently small, \eqref{pm.s21} implies that ${\rm dist }\big(\lambda_0, \sigma(H_{\la W \ra})\big) < \delta/8$. From this we learn that (recall $I_1 = (\lambda_0-\frac{\delta}{4}, \lambda_0 + \frac{\delta}{4})$)
 \begin{equation}
 \label{sH_W}
 \sigma(H_{\la W \ra}) \cap I_1 \neq \emptyset \ .
 \end{equation} 
Next we prove the Mourre estimate \eqref{pm.s200}; we shall work  perturbatively from  \eqref{pm.s100}.
First
 \begin{align*}
 g_{I_1}(H_0) \, \im [H_{\la W \ra}, A] \,  g_{I_1}(H_0) = 
 g_{I_1}(H_0) \, \im [H_0, A] \,  g_{I_1}(H_0) + 
 g_{I_1}(H_0) \, \im [\la W \ra, A] \,  g_{I_1}(H_0) ;
\end{align*}
we bound the first term in the right  hand side above from below using  \eqref{pm.s100}. Concerning the second term, we use
\begin{align}
\label{pm49}
\exists \, M_1 \in \N, \,  C_1>0  \colon \quad \norm{ \im [\la W \ra, A]}_{\cL(\cH)} \leq      C_1 [W ]_{M_1}   \ 
\end{align}
(by \eqref{est.1}, \eqref{est.3}, \eqref{B-est2}) 
and the inequalities \eqref{op.ineq} to bound it from above getting
$$
 g_{I_1}(H_0) \, \im [\la W \ra, A] \,  g_{I_1}(H_0) \geq - C_1 \, [W]_{M_1} \, 
 g_{I_1}(H_0)^2  \ . 
$$
Therefore we find
\begin{align}
\label{pm5}
g_{I_1}(H_0) \, \im [H_{\la W \ra}, A] \, g_{I_1}(H_0) \geq \left( \frac{\theta}{2} - C_1 [W]_{M_1} \right) \, g_{I_1}(H_0)^2  \ . 
\end{align}
Take  now  an open interval $I_2 \subset I_1$ such that  $\sigma(H_{\la W \ra}) \cap { I}_2 \neq \emptyset $ (it is possible by \eqref{sH_W}); take also 
$g_{I_2} \in C^\infty_c(\R, \R_{\geq 0})$ with ${\rm supp }\,  g_{I_2} \subseteq I_1$ and $g_{I_2} \equiv 1$ on $I_2$;
remark that  $
g_{I_1}  g_{I_2}= g_{I_2}$.
Now we  wish to replace $g_{I_1}(H_0)$ by $g_{I_2}(H_{\la W \ra})$ in \eqref{pm5}, thus getting the claimed estimate \eqref{pm.s200}. 
So write 
\begin{align}
\notag
g_{I_2}(H_{\la W \ra}) \, \im [H_{\la W \ra}, A] & \, g_{I_2}(H_{\la W \ra})
 = 
g_{I_2}(H_{\la W \ra}) \,  g_{I_1}(H_{\la W \ra}) \, \im [H_{\la W \ra}, A] \,  g_{I_1}(H_{\la W \ra}) \, g_{I_2}(H_{\la W \ra}) \\
 \label{pm6}
&  =  g_{I_2}(H_{\la W \ra}) \, g_{I_1}(H_0)\, \im [H_{\la W \ra}, A] \,  g_{I_1}(H_0) \, g_{I_2}(H_{\la W \ra}) \\
 \label{pm7}
&  \ +
g_{I_2}(H_{\la W \ra})\,   \Big(
  \big( g_{I_1}(H_{\la W \ra}) - g_{I_1}(H_{0}) \big) \, \im [H_{\la W \ra}, A] \, g_{I_1}(H_{0})  \\
 \label{pm8}
&  \ + 
  g_{I_1}(H_{\la W \ra})  \, \im [H_{\la W \ra}, A ] \,   \big( g_{I_1}(H_{\la W \ra}) - g_{I_1}(H_{0}) \big)
   \Big) \, g_{I_2}(H_{\la W \ra})
\end{align}
Again we estimate \eqref{pm6} from below and the other lines from above. First
\begin{align}
\label{pm80}
\eqref{pm6}  & \stackrel{ \eqref{pm5}}{\geq }
\left( \frac{\theta}{2} - C_{1} [W]_{M_1} \right) \, g_{I_2}(H_{\la W \ra})\, g_{I_1}(H_0)^2  \, g_{I_2}(H_{\la W \ra}) \ . 
\end{align}
We still have to bound from below $g_{I_2}(H_{\la W \ra})\, g_{I_1}(H_0)^2  \, g_{I_2}(H_{\la W \ra})$. 
To proceed  we  use that $g_{I_1}(H_{\la W \ra}) - g_{I_1}(H_0)$ is small in size, being bounded, via Lemma \ref{lem:g} and  \eqref{pm.s20}, by 
\begin{equation}
\label{pm81}
\begin{aligned}
\norm{g_{I_1}(H_{\la W \ra}) - g_{I_1}(H_0)}_{\cL(\cH)} \leq C \, [W]_{M_0}\, . 
\end{aligned}
\end{equation}
So   write
\begin{equation}
 \label{pm9}
\begin{aligned}
& g_{I_2}(H_{\la W \ra})\,  g_{I_1}(H_0)^2  \, g_{I_2}(H_{\la W \ra})
  = 
g_{I_2}(H_{\la W \ra})\, g_{I_1}(H_{\la W \ra})^2  \, g_{I_2}(H_{\la W \ra}) \\
 &  \ \  + g_{I_2}(H_{\la W \ra})\, 
  \Big( 
  g_{I_1}(H_{\la W \ra}) \,( g_{I_1}(H_0)- g_{I_1}(H_{\la W \ra})) \, + 
( g_{I_1}(H_0)- g_{I_1}(H_{\la W \ra})) \, g_{I_1}(H_0)  \Big) \, g_{I_2}(H_{\la W \ra}) . 
 \end{aligned} 
 \end{equation}
Therefore, using $
g_{I_1}  g_{I_2} = g_{I_2}$,    
estimates \eqref{pm81} and \eqref{op.ineq}, we deduce
$$
g_{I_2}(H_{\la W \ra})\, g_{I_1}(H_0)^2  \, g_{I_2}(H_{\la W \ra}) 
   \geq  \big(1- C [W]_{M_0} \big) \, g_{I_2}(H_{\la W \ra})^2 \ . 
$$
Thus we can finally  estimate line \eqref{pm6} from below using \eqref{pm80} and the previous estimate, concluding 
\begin{equation}
\label{pm10}
\eqref{pm6} \geq  
\left( \frac{\theta}{2}- C_{1}[W]_{M_1} \right)   \big(1- C [W]_{M_0} \big) \, g_{I_2}(H_{\la W \ra})^2   . 
\end{equation}
Next consider lines \eqref{pm7}, \eqref{pm8}. 
We use the bound  (see  \eqref{pm49})
$$
\norm{ [H_{\la W \ra}, A]}_{\cL(\cH^0)} \leq C \big( 1+ [W]_{M_1}\big) \ , 
$$
 and  \eqref{pm81} to get
  \begin{equation}
\label{pm11}
\eqref{pm7} + \eqref{pm8} \geq - C \, [W]_{M_0}\, (1+  [W]_{M_1}) \, g_{I_2}(H_{\la W \ra})^2 .
\end{equation}
Putting together \eqref{pm10} and \eqref{pm11} we finally find
$$
g_{I_2}(H_{\la W \ra}) \, \im [H_{\la W \ra}, A] \, g_{I_2}(H_{\la W \ra}) \geq  
\left(  \frac{\theta}{2}- C ([W]_{M_1} +  [W]_{M_0} +  [W]_{M_0}\,   [W]_{M_1})\right) \, 
g_{I_2}(H_{\la W \ra})^2 . 
$$
Thus, provided \eqref{normW} holds true for $\tM$ sufficiently large and $\epsilon_0$ sufficiently small, 
the strict Mourre estimate   \eqref{pm.s200}  follows.
Mourre theorem implies that  the spectrum of $H_{\la W \ra}$ in $I_2$ is absolutely continuous and by  \eqref{sH_W} it is also  nonempty;  summarizing (use also  Lemma \ref{lem:ac.meas})
\begin{equation}
\label{s.HW}
\sigma(H_{\la W \ra}) \cap I_2 = \sigma_{ac}(H_{\la W \ra}) \cap I_2
\qquad 
\mbox{and}
\qquad
\abs{\sigma(H_{\la W \ra}) \cap I_2} > 0 \ .
\end{equation}

\noindent{\sc Step 3:} The last step is to consider the  operator $H_N =  H_0 + \la W \ra + T_N = H_{\la W\ra} + T_N$, which, for the remaining part of the proof, we shall denote just by $H$. 
 We shall constantly use that any pseudodifferential operator of strictly negative order is a compact operator on $\cH$ (see  Remark \ref{rem:compact});  in particular $T_N \in \cA_{-1}$ is  compact.
 We begin by proving that
 \begin{equation}
 \label{I2.mes}
 \abs{\sigma(H) \cap I_2} > 0  \ . 
 \end{equation} 
Indeed by Weyl theorem
 $
 \sigma_{ess}(H) = \sigma_{ess}(H_{\la W \ra}) 
 $
and therefore
 \begin{align}
 \notag
 \sigma(H) \cap I_2
 & \supset   \sigma_{ess}(H ) \cap I_2 
   = \sigma_{ess}(H_{\la W \ra }) \cap I_2    = 
   \sigma(H_{\la W \ra}) \cap I_2   \ , 
 \label{pm.s299}
 \end{align}
since  $\sigma_d( H_{\la W \ra}) \cap I_2 = \emptyset$ having $H_{\la W \ra}$ no eigenvalues  in $I_2$. Then \eqref{I2.mes} follows by \eqref{s.HW}.

Next we prove that $H$ fulfills a Mourre estimate over  $ I_2$, i.e. 
\begin{equation}
\label{pm.s300}
g_{I_2}\big( H\big)  \, \im [H, A] \, g_{I_2}\big(H \big) \geq \frac{\theta}{4} 
g_{I_2}\big( H\big)^2 + K 
\end{equation}
with $K$ a compact operator.
We work perturbatively from \eqref{pm.s200}. Again first we compute
$$
g_{I_2}\big( H_{\la W \ra}\big)  \, \im [H, A] \, g_{I_2}\big(H_{\la W \ra} \big)  = 
g_{I_2}\big( H_{\la W \ra}\big)  \, \im [H_{\la W \ra}, A] \, g_{I_2}\big(H_{\la W \ra} \big)
+
g_{I_2}\big( H_{\la W \ra}\big)  \, \im [T_N, A] \, g_{I_2}\big(H_{\la W \ra} \big)
\ ;
$$
we estimate the first term in the r.h.s. above  by \eqref{pm.s200},  whereas the second term is  a compact operator since $[T_N, A] \in \cA_{-1}$. We obtain
\begin{equation}
\label{pm.12}
g_{I_2}\big( H_{\la W \ra}\big)  \, \im [H, A] \, g_{I_2}\big(H_{\la W \ra} \big) \geq \frac{\theta}{4} 
g_{I_2}\big( H_{\la W \ra}\big)^2  + K_1
\end{equation}
with $K_1$ a compact operator. 
Now we must replace $g_{I_2}\big( H_{\la W \ra}\big)$ with $g_{I_2}(H)$.
 We write 
\begin{align}
 \label{pm600}
 & g_{I_2} (H) \,   \im [H, A] \, g_{I_2}(H)
  = g_{I_2}(H_{\la W \ra})\, \im [H, A] \,  g_{I_2}(H_{\la W \ra}) \\
 \label{pm700}
&  \quad + 
  \big( g_{I_2}(H) - g_{I_2}(H_{\la W \ra}) \big) \, \im [H, A] \,g_{I_2}(H_{\la W \ra})  
  + 
  g_{I_2}(H)  \, \im [H, A ] \,   \big(g_{I_2}(H) - g_{I_2}(H_{\la W \ra}) \big) 
\end{align}
This time we use that $g_{I_2}(H) - g_{I_2}(H_{\la W 	\ra})$ is a compact operator, see  Lemma \ref{lem:g}.
Thus
\begin{align}
\notag
\eqref{pm600} 
&\stackrel{\eqref{pm.12}}{\geq}
\frac{\theta}{4}  \, g_{I_2}\big( H_{\la W \ra}\big)^2   + K_1
= \frac{\theta}{4} g_{I_2}(H)^2  + K_2
\label{pm900}
\end{align}
where $K_1$, $K_2$  are  compact operators.
Similarly, using that $\im [H, A] \in \cA_0$ is a bounded operator, we deduce that \eqref{pm700} is a  compact operator. Estimate \eqref{pm.s300} follows. \\
In particular $H$ is conjugated to $A$ over the interval $I_2$ fulfilling 
\eqref{s.HW}.
Proceeding as in Step 1, we produce  a subinterval $I \subset I_2$  such that
$$\abs{I \cap \sigma(H)}>0 \ , 
\qquad
I \cap \sigma(H)  = I \cap \sigma_{ac}(H)$$ and  over which $H$ fulfills the strict Mourre estimate    \eqref{pm}.
\end{proof}

The previous result has proved the existence of an interval $I$ over which $H_N$ fulfills a strict Mourre  estimate.
This implies that   $H_N$   fulfills  dispersive estimates in the form of local energy decay. 
In the literature there are various variants of this result, thus in Appendix \ref{app:minimal} we state and prove the one we apply here. 
\begin{corollary}
\label{cor:ledeH}
Fix $k \in \N$. For any  interval $J\subset I$, any  function $g_J \in C^\infty_c(\R, \R_{\geq 0})$ 
with ${\rm supp }\,  g_J \subset I$, 
$g_J \equiv 1$ 
on $J$, there exists a constant $C_k>0$ such that 
\begin{equation}
\label{eq:SS.app}
\norm{\la A \ra^{-k} \, e^{- \im H_N t} \, g_J(H_N) \,  \vf}_0 \leq C_k \la t \ra^{-k} \norm{\la A \ra^{k} g_J(H_N) \vf}_0 \ , \quad \forall t \in \R \ , \ \ \ \forall \vf \in \cH^k \ . 
\end{equation}
Moreover $J$ can be chosen so that  $\abs{J \cap \sigma(H_N)}> 0$ and  $\sigma(H_N) \cap J  = \sigma_{ac}(H_N)\cap J$.
\end{corollary}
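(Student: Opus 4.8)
The plan is to deduce the local energy decay estimate \eqref{eq:SS.app} directly from the strict Mourre estimate for $H_N$ established in Lemma \ref{lem:H.M2}, via the abstract Sigal--Soffer minimal velocity machinery. First I would invoke Lemma \ref{lem:H.M2}: under the smallness hypothesis \eqref{normW}, there is an interval $I \subset I_0$ with $\abs{I \cap \sigma(H_N)} > 0$, $\sigma(H_N) \cap I = \sigma_{ac}(H_N) \cap I$, and a strict Mourre estimate $g_I(H_N)\,\im[H_N,A]\,g_I(H_N) \geq \theta' g_I(H_N)^2$ holds, with $A \in \cA_1$. The remaining point to verify before quoting the abstract result is the regularity hypothesis (M1): the iterated commutators $\mathrm{ad}^n_A(H_N)$ for $n = 1, \dots, \tN$ must extend to bounded operators on $\cH$. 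This is immediate from the graded Lie-algebra structure (Assumption I (iv)): since $H_N = \la V\ra + \la W\ra + T_N \in \cA_0$ and $A \in \cA_1$, each commutator with $A$ lowers the order by one unit (the commutator of an order-$0$ and an order-$1$ operator is order $0$), so $\mathrm{ad}^n_A(H_N) \in \cA_0$ for all $n \geq 1$, hence bounded by Assumption I (ii); one needs this only for finitely many $n$, so no convergence issue arises.

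Having (M1) and the strict Mourre estimate in hand, I would apply the abstract local energy decay theorem proved in Appendix \ref{app:minimal} (the variant of the Sigal--Soffer / Hunziker--Sigal--Soffer minimal velocity estimates adapted to this setting): for any interval $J \Subset I$ and any $g_J \in C^\infty_c(\R,\R_{\geq 0})$ with $\mathrm{supp}\, g_J \subset I$ and $g_J \equiv 1$ on $J$, and for each fixed $k \in \N$, there is $C_k > 0$ such that \eqref{eq:SS.app} holds. The number of bounded iterated commutators needed ($\tN$) depends on $k$; since $k$ is fixed and all $\mathrm{ad}^n_A(H_N)$ are bounded, this is harmless. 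The estimate extends from $\cH^{+\infty}$ to all $\vf \in \cH^k$ by density, using that $\la A\ra^{\pm k}$ maps $\cH^k$ into $\cH^0$ boundedly (again because $A \in \cA_1$, so $\la A\ra^k$ is a bounded map $\cH^k \to \cH^0$ by Assumption I).

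Finally, for the last assertion, I would choose $J$ as in Step 3 of the proof of Lemma \ref{lem:H.M2}: one takes the subinterval $I$ produced there — which already satisfies $\abs{I \cap \sigma(H_N)} > 0$ and $I \cap \sigma(H_N) = I \cap \sigma_{ac}(H_N)$ — and then picks $J \subset I$ to be a smaller open interval still meeting $\sigma(H_N)$; by Lemma \ref{lem:ac.meas}, since $\sigma(H_N)$ is purely absolutely continuous there, any such $J$ centered at a point of $\sigma_{ac}(H_N)$ has $\abs{J \cap \sigma(H_N)} > 0$, and purity of the spectrum on $I$ is inherited by $J$. The main obstacle here is not conceptual but one of bookkeeping: making sure the abstract theorem in Appendix \ref{app:minimal} is stated with precisely the hypotheses we have verified (strict Mourre plus finitely many bounded commutators, with the conjugate operator $A$ only selfadjoint and not necessarily with nice domain properties beyond $D(A) \cap \cH$ dense), and that the weight $\la A\ra^{k}$ appearing there matches ours. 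Everything else is a direct citation.
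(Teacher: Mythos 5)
Your approach coincides with the paper's: apply Theorem~\ref{thm:SS} of Appendix~\ref{app:minimal}, noting that (M1) holds for all $n$ since ${\rm ad}^n_A(H_N)\in\cA_0\subset\cL(\cH)$ by the graded Lie-algebra structure, while (M2) is exactly Lemma~\ref{lem:H.M2}; the choice of $J$ also matches. The one point you gloss over is the justification that the right-hand side of \eqref{eq:SS.app} is actually finite for $\vf\in\cH^k$: knowing that $\la A\ra^k:\cH^k\to\cH^0$ is bounded is not enough, because you must first know that $g_J(H_N)\vf\in\cH^k$, and that is not automatic since $H_N$ is not a function of $K_0$ and $g_J(H_N)$ has no a priori relation to the scale $\{\cH^r\}$. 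The paper devotes Lemma~\ref{lem:gJ.bd} to precisely this: using the adjoint expansion $[g_J(H_N),K_0^k]K_0^{-k}=\sum_j c_{k,j}\,{\rm ad}^j_{K_0}(g_J(H_N))\,K_0^{-j}$ together with Lemma~\ref{ad.A.g(H)} to show that $g_J(H_N)$ extends to a bounded operator $\cH^k\to\cH^k$. Your density argument from $\cH^{+\infty}$ to $\cH^k$ would also require this boundedness (otherwise the right-hand side is not a continuous functional on $\cH^k$), so the gap is real even under your formulation, though easily repaired by exactly the commutator bound in Lemma~\ref{lem:gJ.bd}.
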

\begin{proof}
Apply Theorem \ref{thm:SS}, noting that 
 condition (M1) at page \pageref{M1} is trivially satisfied $\forall n \in \N$ as $ {\rm ad}^n_A(H_N)\in \cA_0 \subset \cL(\cH)$, whereas the whole point of Lemma  \ref{lem:H.M2} was to verify (M2). 
 This gives estimate \eqref{eq:SS.app}. The right hand side is finite for $\vf \in \cH^k$ by Lemma \ref{lem:gJ.bd} below,  which ensures that $g_J(H_N) \vf \in \cH^k$. 
 Finally note that, since $\abs{I \cap \sigma(H_N)}>0$, it is certainly possible to choose  $J \subset I$ so that  $\abs{J \cap \sigma(H_N)}>0$; as $H_N$ fulfills a strict Mourre estimate over $I$, its spectrum in this interval is absolutely continuous, so the same is true  in $J$.
 
\end{proof}

\begin{lemma}
\label{lem:gJ.bd}
For any $k \in \N$,  $g_J(H_N)$ extends to a bounded operator $\cH^k\to \cH^k$. 
\end{lemma}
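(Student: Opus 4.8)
The plan is to reduce Lemma \ref{lem:gJ.bd} to a weighted resolvent estimate for $H_N$ and then feed it into the Helffer--Sjöstrand formula. First recall that $H_N=\la V\ra+\la W\ra+T_N\in\cA_0$ (by Lemma \ref{prop:av}, Corollary \ref{cor:res.pseudo}, and $\cA_{-1}\subseteq\cA_0$) is a \emph{bounded} selfadjoint operator on $\cH^0$; hence $\sigma(H_N)$ is compact and $g_J(H_N)\in\cL(\cH^0)$ is given by the functional calculus of Appendix \ref{app:fc}. Since $\cH^k=D(K_0^k)$ with $\norm{\vf}_k=\norm{K_0^k\vf}_0$, it suffices to show that $K_0^{k}\,g_J(H_N)\,K_0^{-k}$, a priori defined on the dense subspace $\cH^{+\infty}$, extends to a bounded operator on $\cH^0$.

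The key ingredient is the following \emph{conjugated resolvent estimate}: for every $j\in\N$ there is $C_j>0$ such that
\begin{equation}
\label{gJ:res}
\norm{K_0^{j}\,(H_N-z)^{-1}\,K_0^{-j}}_{\cL(\cH^0)}\ \leq\ C_j\,\abs{\Im z}^{-(j+1)}\ ,\qquad \forall\,z\in\C\ \text{ with }\ 0<\abs{\Im z}\leq 1\ .
\end{equation}
For $j=0$ this is the classical bound $\norm{(H_N-z)^{-1}}_{\cL(\cH^0)}\leq\abs{\Im z}^{-1}$. For the inductive step one uses, on the core $\cH^{+\infty}$, the resolvent commutator identity $[K_0^{j},(H_N-z)^{-1}]=-(H_N-z)^{-1}[K_0^{j},H_N](H_N-z)^{-1}$ together with the factorization $[K_0^{j},H_N]=D_j\,K_0^{j-1}$, where $D_j:=[K_0^{j},H_N]\,K_0^{1-j}$. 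By Assumption I the bracket $[K_0^{j},H_N]$ is of order $j-1$ (commutator of an operator of order $j$ with one of order $0$), so $D_j\in\cA_0$ is bounded on $\cH^0$. This yields the recursion
\begin{equation}
\label{gJ:rec}
K_0^{j}(H_N-z)^{-1}K_0^{-j}=(H_N-z)^{-1}-(H_N-z)^{-1}\,D_j\,\big(K_0^{j-1}(H_N-z)^{-1}K_0^{-(j-1)}\big)\,K_0^{-1}\ ;
\end{equation}
taking operator norms, using $K_0^{-1}\in\cL(\cH^0)$ and the inductive hypothesis, and noting that for $\abs{\Im z}\leq1$ every power $\abs{\Im z}^{-\ell}$ with $\ell\leq j+1$ is dominated by $\abs{\Im z}^{-(j+1)}$, one obtains \eqref{gJ:res}. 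In particular, for $\abs{\Im z}\leq1$, $(H_N-z)^{-1}$ maps $\cH^j$ boundedly into itself.

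With \eqref{gJ:res} at hand I conclude by the Helffer--Sjöstrand formula. Pick an almost analytic extension $\tilde g_J\in C^\infty_c(\C)$ of $g_J$, supported in $\{\abs{\Im z}\leq1\}$ and obeying $\abs{\bar\partial\tilde g_J(z)}\leq c_M\,\abs{\Im z}^{M}$ for every $M\in\N$; then
\begin{equation}
\label{gJ:HS}
g_J(H_N)=\frac{1}{\pi}\int_{\C}\bar\partial\tilde g_J(z)\,(H_N-z)^{-1}\,\di L(z)\ ,
\end{equation}
$\di L$ being Lebesgue measure on $\C$. Conjugating \eqref{gJ:HS} by $K_0^{k}$, estimating the integrand by \eqref{gJ:res} with $j=k$ and choosing $M=k+1$, the integrand $\bar\partial\tilde g_J(z)\,K_0^{k}(H_N-z)^{-1}K_0^{-k}$ is uniformly bounded in $\cL(\cH^0)$ on the compact set ${\rm supp}\,\bar\partial\tilde g_J$; hence the integral converges absolutely in $\cL(\cH^0)$, i.e. $K_0^{k}g_J(H_N)K_0^{-k}\in\cL(\cH^0)$, which is the claim. (Should the functional calculus of Appendix \ref{app:fc} already be known to preserve the algebra, i.e. $g_J(H_N)\in\cA_0\subseteq\bigcap_{s\in\R}\cL(\cH^s,\cH^s)$, the lemma is of course immediate.)

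The only point requiring genuine care is \eqref{gJ:res}: identity \eqref{gJ:rec} must first be established on $\cH^{+\infty}$ and then propagated to $\cH^0$ by density, and one must check that $(H_N-z)^{-1}$ does not leave the scale $\{\cH^j\}_{j\geq0}$. The cleanest way is to run \eqref{gJ:rec} with $K_0^{j}$ replaced by the bounded regularization $K_0^{j}\chi(K_0/n)$, $\chi\in C^\infty_c(\R)$ with $\chi\equiv1$ near $0$, obtaining a bound uniform in $n$ and then letting $n\to\infty$ (using $\chi(K_0/n)\to\uno$ strongly on each $\cH^j$). Everything else is routine, the graded Lie-algebra axiom Assumption I (iv) doing the essential work.
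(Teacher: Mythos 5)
Your argument is correct and reaches the same conclusion as the paper, but it is organized differently. The paper proves Lemma \ref{lem:gJ.bd} by first writing $K_0^k g_J(H_N)K_0^{-k}=g_J(H_N)-[g_J(H_N),K_0^k]K_0^{-k}$, expanding the commutator with the adjoint identity \eqref{ad2} into a finite sum of terms ${\rm ad}^j_{K_0}(g_J(H_N))K_0^{-j}$, and then invoking the separate Lemma \ref{ad.A.g(H)} to conclude that ${\rm ad}^j_{K_0}(g_J(H_N))\in\cL(\cH)$ once one knows that ${\rm ad}^j_{K_0}(H_N)\in\cL(\cH)$. Lemma \ref{ad.A.g(H)} is itself a Helffer--Sj\"ostrand argument at the resolvent level, so the two proofs rest on the very same inputs — the graded Lie-algebra axiom (Assumption I (iv)) to guarantee that iterated brackets of $K_0$ and $H_N$ are of order zero, and the almost-analytic-extension estimate \eqref{est.aae2} to control the resulting powers of $\abs{{\rm Im}\,z}^{-1}$. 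Where you differ is that you skip the commutator expansion and instead conjugate the resolvent directly by $K_0^k$, proving the weighted bound \eqref{gJ:res} by an explicit recursion in $j$, and then plugging it once into Helffer--Sj\"ostrand. This is a perfectly valid alternative: it is more self-contained (no appeal to a separate commutator lemma), at the mild cost of repeating the resolvent-expansion bookkeeping that the paper has packaged into Lemma \ref{ad.A.g(H)}. You are also right to flag the density/regularization issue: the recursion \eqref{gJ:rec} is only formal on $\cH^{+\infty}$ and your suggested cut-off $\chi(K_0/n)$ is a clean way to make it rigorous (the paper glosses over this point). Your final parenthetical remark is also well placed — Appendix \ref{app:fc} does not claim that functional calculus preserves $\cA_0$, so one cannot shortcut the argument that way.
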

\begin{proof}
As
 $
 K_0^k \, g_J(H_N) \, K_0^{-k} = g_J(H_N) - [  g_J(H_N), K_0^k ]  K_0^{-k} ,
 $ 
it is clearly sufficient to show that 
 $[  g_J(H_N), K_0^k ] K_0^{-k}$ is bounded on $\cH$.
  The adjoint formula \eqref{ad2}
 gives 
  $$
[  g_J(H_N), K_0^k ]  K_0^{-k} = 
\sum_{j = 1}^k c_{k,j} \, {\rm ad }^j_{K_0}(g_J(H_N)) \, K_0^{-j}  \ ;
 $$
 then it is enough to show that 
  ${\rm ad }^j_{K_0}(g_J(H_N)) \in \cL(\cH)$.  As ${\rm ad }^j_{K_0}(H_N)$ is a bounded operator $\forall j$ (symbolic calculus), the result is an immediate application of Lemma \ref{ad.A.g(H)}.
\end{proof}
We finally prove  Proposition \ref{prop:decay}.
\begin{proof}[Proof of Proposition \ref{prop:decay}]
First we show that for any $k \in \N$, there exists $C_{2k} >0$ such that 
\begin{equation}
\label{disp.est0}
\norm{e^{- \im t H_N} g_J(H_N) \vf}_{-2k} \leq C_{2k} \, \la t \ra^{-2k} \, \norm{ g_J(H_N) \vf}_{2k} , \qquad \forall t \in \R , \ \ \ \forall \vf \in \cH^{2k} \ .
\end{equation}
This follows from 
Corollary \ref{cor:ledeH} with $k \leadsto 2k$. Indeed,   as $A \in \cA_1$, the operator $\la A \ra^{2k} = (1+ A^2)^k \in \cA_{2k}$ and therefore, by symbolic calculus,  $K_0^{-2k} \la A \ra^{2k}$ and $\la A \ra^{2k} K_0^{-2k}$ belong to $\cA_0 \subset \cL(\cH)$.
%Thus we find, using estimate \eqref{eq:SS.app},  
Then
\begin{align*}
 \norm{ e^{- \im t H_N} g_J(H_N) \vf}_{-2k}
& \leq 
\norm{ K_0^{-2k} \, \la A \ra^{2k}}_{\cL(\cH)} \, \norm{\la A \ra^{-2k}  e^{- \im t H_N}g_J(H_N) \vf}_0 \\
& \leq 
 C_{2k} \la t \ra^{-2k} \norm{\la A \ra^{2k} g_J(H_N) \vf}_0\\
&\leq C_{2k} \la t \ra^{-2k} \norm{\la A \ra^{2k} K_0^{-2k}}_{\cL(\cH)} \norm{g_J(H_N) \vf}_{2k}
\end{align*}
proving \eqref{disp.est0}.
Then   linear interpolation with the equality  
 $\norm{  e^{- \im t H_N} \vf_0}_0 = \norm{\vf_0}_0$ $\, \forall t$ gives
 $\forall r \in [0, 2k]$
$$
 \norm{  e^{- \im t H_N} g_J(H_N) \vf}_{-r}  \leq  C_r  \la t \ra^{-r} \norm{g_J(H_N) \vf}_r \ , \quad \forall t \in \R \ , \ \ \ \forall  \vf \in \cH^r \ . 
$$
Finally we must show that this estimate is not trivial, namely that  $\exists \vf \in \cH^k$ so that $g_J(H_N) \vf \neq 0$.
So 
take $J \subset I$ with  $\abs{J \cap \sigma(H_N)}> 0$ and  $\sigma(H_N) \cap J  = \sigma_{ac}(H_N)\cap J$, which is possible by Corollary \ref{cor:ledeH}.
As  $g_J(H_N)\cH \neq \{0\}$ and $\cH^k$ is dense in $\cH$, we have that   $g_J(H_N)\cH^{k} \neq \{0\}$. Then it is enough to take 
 $f \in \cH^{k}$ so that  $g_J(H_N)f \neq 0$, and put $\vf_0 := g_J(H_N)f $ which, 
 by Lemma \ref{lem:gJ.bd}, belongs to  
  $ \cH^{k}$. Such initial datum fulfills the claim of Proposition \ref{prop:decay}.
\end{proof}

\subsection{Proof of Theorem \ref{thm:ab}}
We are finally in position of proving Theorem \ref{thm:ab}.
Recall that in Corollary \ref{cor:res.pseudo} we have conjugated  equation \eqref{eq.ab} to  \eqref{res.eq2}
with a change of variables bounded $\cH^r \to \cH^r$ uniformly in time, whereas in Proposition \ref{prop:decay}
we have constructed a solution of the effective equation 
$\im \pa_t \psi = H_N \psi$ with decaying negative Sobolev norms, therefore with growing positive Sobolev norms.
The last step is to construct a solution of the full equation 
\eqref{res.eq2} with growing Sobolev norms. 
To achieve this, we exploit that the perturbation $R_N(t)$ is $N$-smoothing (Definition \ref{smoothing}).\\
So to proceed we fix the parameters.
First  fix   $r >0$, 
then choose  $N, k \in \N$ such that 
\begin{equation}
\label{Nk}
 N \geq 2r +2 , \quad k  	\geq  N - r . 
\end{equation}
Apply Corollary \ref{cor:res.pseudo}  with such $N$, producing the operators $T_N$, $R_N(t)$ and conjugating \eqref{eq.ab} to  \eqref{res.eq2}.
By Proposition 
\ref{prop:decay}, 
 $\exists \, \vf_0 \in \cH^k$ such that  $\vf(t) := e^{- \im t H_N} \vf_0$ fulfills  $\forall {\tt r} \in [0, k]$:
\begin{equation}
\label{decay2}
\norm{\vf(t)}_{- {\tt r}} \leq C_{{\tt r},N} \la t \ra^{-{\tt r} } \, \norm{\vf_0}_{{\tt r}}  , \qquad \forall t \in \R \ . 
\end{equation}
We look for an 
 exact solution $\phi(t)$  of \eqref{res.eq2} of the form  $\phi(t) = \vf(t) + u(t)$, i.e.   $u(t)$  has to  satisfy
\begin{equation}
\label{}
\im \pa_t u = \big(H_N + R_N(t) \big) u  + R_N(t) \vf(t) .
\end{equation}
Denoting by $U_N(t,s)$ the linear propagator of $H_N + R_N(t)$, we choose 
\begin{equation}
\label{u}
u(t) : = \im  \int\limits_t^{+ \infty} U_N(t,s) \, R_N(s) \, \vf(s) \, \di s . 
\end{equation}
We estimate the  $\cH^r$ norm of $u(t)$. 
As 
$$
\sup_t \norm{[ H_N + R_N(t), \, K_0] }_{\cL(\cH^m)} <  C_m < \infty  \ , \qquad \forall m \in \R , 
$$
Theorem 1.5  of \cite{MaRo} guarantees
  that   the  propagator $U_N(t,s)$ extends to a bounded operator $\cH^r \to \cH^r$  fulfilling\footnote{apply the theorem with $\tau = 0$ and note that in that paper we defined  $\norm{\psi}_r \equiv \norm{K_0^{r/2} \psi}_0$, therefore the estimate in that paper reads  explicitly $\norm{K_0^{r/2}U_N(t,s)\psi }_{0} \leq C_r \, \la t-s \ra^{r/2} \norm{K_0^{r/2} \psi}_0$
  } 
 $$
\forall r >0 \ \ \ \exists \, C_r >0 \colon \qquad \norm{U_N(t,s)}_{\cL(\cH^r)} \leq C_r \, \la t-s \ra^{r} , \quad \forall t,s \in \R \ . 
 $$
 This estimate, the smoothing property $R_N(t)\colon  \cH^{r-N} \to \cH^{r}$ and  \eqref{decay2} with ${\tt r} := N-r \in [0,k]$ give
 \begin{align*}
 \norm{u(t)}_r 
 &
 \leq C_r  \int\limits_t^{+ \infty}   \la t-s \ra^r \norm{R_N(s) \, \vf(s)}_r \, \di s 
 \leq
 C_r  \int\limits_t^{+ \infty}   \la t-s \ra^r  \, \norm{\vf(s)}_{-(N-r)} \, \di s\\
 & \leq C_{r,N}  \, \norm{\vf_0}_{N - r}  \int\limits_t^{+ \infty}   \la t-s \ra^r  \, \frac{1}{\la s \ra^{N-r}} \,  \, \di s
 \leq 
  C_{r,N}  \, \norm{\vf_0}_{k}   \la t \ra^{-1} \ .
 \end{align*}
In particular the $\cH^r$ norm of  $u(t)$ decreases to 0 as $t \to \infty$. 
Then  $\phi(t) = \vf(t) + u(t)$ fulfills 
\begin{equation}
\label{}
\norm{\phi(t) }_r \geq \norm{\vf(t)}_r - \norm{u(t)}_r \geq 
c_r \frac{\norm{\vf_0}_0^2}{\norm{\vf_0}_r} \la t \ra^r - 
 C_{r,N} \norm{\vf_0}_k \la t \ra^{-1} \geq C \la t \ra^r \ ,  \quad \forall |t|  \geq T  \, , 
\end{equation}
where we used \eqref{decay2} with ${\tt r} = r$ and   Remark \ref{rem:g}. \\
Finally we get a solution of the original equation 
\eqref{eq.ab} putting 
$\psi(t) = \cU_N(t)^{-1} \phi(t)$, recall Proposition \ref{prop:rpdnf}. The operator  $\cU_N(t)$ fulfills  \eqref{unitary}, thus  $\psi(t)$  has polynomially growing Sobolev norms as \eqref{eq:gr},  concluding the proof of  Theorem \ref{thm:ab}.
 
 We can also prove the existence of infinitely many solutions undergoing growth of Sobolev norms.
\begin{corollary}
\label{cor:inf.many}
There are infinitely many distinct solutions of equation \eqref{eq.ab} with growing Sobolev norms.
\end{corollary}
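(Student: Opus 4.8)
The plan is to revisit the proof of Theorem \ref{thm:ab} and observe that the seed datum $\vf_0\in\cH^k$ used there was essentially arbitrary inside a fixed infinite–dimensional subspace, so that producing infinitely many \emph{distinct} solutions reduces to producing infinitely many linearly independent admissible seeds. Concretely, I would fix $r>0$ and $N,k$ as in \eqref{Nk}, run Corollary \ref{cor:res.pseudo} and Lemma \ref{lem:H.M2} to obtain $H_N$ and the interval $I$ over which it satisfies a strict Mourre estimate, so that by Mourre theorem and Lemma \ref{lem:ac.meas} one has $\sigma(H_N)\cap I=\sigma_{ac}(H_N)\cap I$ and $\abs{\sigma(H_N)\cap I}>0$; then I would look for the seeds inside (a smooth approximation of) the absolutely continuous spectral subspace of $H_N$ over $I$.

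For the seeds, since $\abs{\sigma(H_N)\cap I}>0$ I would choose countably many open subintervals $J_n\subset I$, $n\in\N$, with pairwise disjoint closures and $\abs{J_n\cap\sigma(H_N)}>0$ (such a family exists precisely because $\sigma(H_N)\cap I$ has positive Lebesgue measure), and pick $g_{J_n}\in C_c^\infty(\R,\R_{\geq0})$ with $g_{J_n}\equiv1$ on $J_n$, $\mathrm{supp}\,g_{J_n}\subset I$, and with the supports $\mathrm{supp}\,g_{J_n}$ pairwise disjoint. Since $J_n\cap\sigma(H_N)\neq\emptyset$ and $g_{J_n}\equiv1$ there, $\norm{g_{J_n}(H_N)}_{\cL(\cH)}\geq1$, so $g_{J_n}(H_N)\neq0$; as $\cH^{+\infty}$ is dense in $\cH$ one can take $f_n\in\cH^{+\infty}$ with $\vf_0^{(n)}:=g_{J_n}(H_N)f_n\neq0$, and by (the proof of) Lemma \ref{lem:gJ.bd} these lie in $\cH^{+\infty}\subset\cH^k$. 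The disjointness of the supports forces $g_{J_n}(H_N)g_{J_m}(H_N)=(g_{J_n}g_{J_m})(H_N)=0$ for $n\neq m$, hence, by selfadjointness of $g_{J_n}(H_N)$, $\la\vf_0^{(n)},\vf_0^{(m)}\ra=\la f_n,g_{J_n}(H_N)g_{J_m}(H_N)f_m\ra=0$: the $\vf_0^{(n)}$ are nonzero and pairwise orthogonal, in particular pairwise distinct.

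Next, for each $n$ I would simply re-run the existing machinery with $\vf_0^{(n)}$ in place of $\vf_0$. Corollary \ref{cor:ledeH}, applied with $J\leadsto J_n$, $g_J\leadsto g_{J_n}$ and $\vf\leadsto f_n$, yields the local energy decay estimate \eqref{eq:SS.app} for the datum $\vf_0^{(n)}$; the proof of Proposition \ref{prop:decay} then gives a nontrivial solution $\vf^{(n)}(t)=e^{-\im tH_N}\vf_0^{(n)}$ of \eqref{eq.H} satisfying \eqref{decay}; and feeding $\vf^{(n)}$ into the proof of Theorem \ref{thm:ab} (defining $u^{(n)}$ by \eqref{u}, $\phi^{(n)}=\vf^{(n)}+u^{(n)}$, and $\psi^{(n)}=\cU_N^{-1}\phi^{(n)}$) produces solutions of \eqref{eq.ab} with $\norm{\psi^{(n)}(t)}_r\geq C_n\la t\ra^r$ for $\abs t\geq T_n$. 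It remains to verify distinctness: if $\psi^{(n)}=\psi^{(m)}$ for $n\neq m$ then $\phi^{(n)}=\phi^{(m)}$, hence $\vf^{(n)}(t)-\vf^{(m)}(t)=u^{(m)}(t)-u^{(n)}(t)$ for all $t$; the left-hand side has $\cH^0$–norm equal to the constant $\norm{\vf_0^{(n)}-\vf_0^{(m)}}_0>0$ (the flow $e^{-\im tH_N}$ is unitary and the seeds are orthogonal and nonzero), while the right-hand side has $\cH^0$–norm $\lesssim\norm{u^{(n)}(t)}_r+\norm{u^{(m)}(t)}_r\to0$ as $t\to\infty$ (this decay was established in the proof of Theorem \ref{thm:ab}, using $\cH^r\hookrightarrow\cH^0$ for $r\geq0$), a contradiction. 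Thus the $\psi^{(n)}$, $n\in\N$, are pairwise distinct.

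The main obstacle — indeed the only non-routine point — is keeping the infinitely many seeds simultaneously (i) in the high-regularity space $\cH^k$, (ii) inside, spectrally, the strict-Mourre interval $I$ so that Corollary \ref{cor:ledeH} applies, and (iii) linearly independent. This is exactly why one localizes with smooth compactly supported cutoffs $g_{J_n}$, for which Lemma \ref{lem:gJ.bd} provides boundedness on $\cH^k$ — sharp spectral projections $E_{J_n}(H_N)$ would not do — and arranges disjoint supports to obtain orthogonality (hence linear independence and distinctness) essentially for free. Once the family of seeds is in place, every other step is a verbatim repetition of the arguments already carried out for a single solution.
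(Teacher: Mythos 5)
Your proof is correct and rests on the same core mechanism as the paper's: the local energy decay of Corollary~\ref{cor:ledeH}, the Duhamel ansatz $\phi=\vf+u$ with $\norm{u(t)}_r\to0$, and the contradiction between the $\cH^0$-constancy of $\norm{e^{-\im tH_N}(\vf_1-\vf_2)}_0$ (unitarity of the truncated flow) and the $\cH^0$-decay of the correction terms. Where you differ is in the seed construction. The paper fixes a \emph{single} interval $J$ as in Corollary~\ref{cor:ledeH}, notes that $\mathrm{Ran}\,g_J(H_N)$ is already infinite-dimensional (since $\sigma(H_N)\cap J$ is purely absolutely continuous with positive measure), and then shows that the map $\vf\mapsto(\uno+\cK_0)\vf$ from seeds to initial data is injective on $\mathrm{Ran}\,g_J(H_N)\cap\cH^k$ --- precisely your asymptotic argument, applied to an arbitrary pair $\vf_1\neq\vf_2$. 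You instead build countably many intervals $J_n\subset I$ with pairwise disjoint closures and $\abs{J_n\cap\sigma(H_N)}>0$ (which, as you assert, follows from positivity of Lebesgue measure via a density-point argument), so that the seeds $g_{J_n}(H_N)f_n$ are pairwise orthogonal. This geometric construction is valid but unnecessary: the orthogonality you extract from disjoint supports is strictly stronger than what the distinctness argument requires. Distinctness of the seeds, which a single infinite-dimensional $\mathrm{Ran}\,g_J(H_N)$ already supplies, is enough, and the injectivity of $\uno+\cK_0$ is exactly the content of your final contradiction. The paper's route is therefore leaner (one interval, one injectivity claim), while yours buys an explicit orthonormal system of seeds at the cost of a separate measure-theoretic construction that you left as an assertion.
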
 
\begin{proof}
We fix $r>0$ and choose $N,k$ as in \eqref{Nk}.
From the previous proof, it follows that any initial data of the form
$$
\psi(0) := (\uno + \cK_0) \vf  \ , \qquad \cK_t\vf:=  \im \int_t^{+ \infty} U_N(t,s) R_N(s) e^{- \im s H_N}\vf  \, \di s  ,  \, \qquad t \geq 0 , 
$$
with $\vf \in {\rm Ran }\, g_J(H_N) \cap \cH^k$, gives rise to a solution with growing Sobolev norms (see also Remark \ref{rem:inf.many}).  Here $J$ is the interval of Corollary \ref{cor:ledeH}.
In particular, as 
$\abs{J \cap \sigma(H_N)}> 0$ and  $\sigma(H_N) \cap J  = \sigma_{ac}(H_N)\cap J$, the set ${\rm Ran }\, g_J(H_N)$ has infinite dimension.
Let us prove that $\uno + \cK_0$ is injective. 
Assume 
there are $\vf_1 \neq \vf_2 \in {\rm Ran }\, g_J(H_N)\cap \cH^k$ with  $(\uno + \cK_0)\vf_1 = (\uno + \cK_0) \vf_2$.
Put $u_j(t) : =\cK_t \vf_j$, $j = 1,2$; 
arguing as in the previous proof one has 
$\norm{u_j(t)}_r \to 0$ as $t \to \infty$.\\
Then   $\cU_N(t)^{-1}(e^{- \im t H_N}\vf_j + u_j(t))$, $j =1,2$, both solve \eqref{eq.ab} and have the same initial datum, so they are the same solution $\psi(t)$ of equation \eqref{eq.ab}.
Then 
\begin{align*}
\norm{\vf_1 - \vf_2}_0 &
 = \norm{e^{- \im t H_N} (\vf_1 - \vf_2)}_0 \\
&  \leq C_r \norm{\cU_N^{-1}(t) e^{- \im t H_N}(\vf_1 - \vf_2)}_r  \leq C_r \big(\norm{u_1(t)}_r + \norm{u_2(t)}_r\big) \to 0 
\end{align*}
as $t \to \infty$. 
Hence $\vf_1 = \vf_2$.
\end{proof}

\section{Applications}
\label{sec:app}
In the following section we apply Theorem \ref{thm:ab} to  the harmonic oscillator on $\R$ and  the half-wave equation on $\T$.
In both  cases we construct transporters which are stable under small, time periodic, pseudodifferential perturbations.

\subsection{Harmonic oscillator on $\R$} 
Consider the quantum harmonic oscillator 
\begin{align}
\label{har.osc}
& \im \pa_t \psi = \frac{1}{2} ( - \pa_x^2 + x^2) \psi +  V(t, x, D)\psi  , \quad x \in \R  .  
\end{align}
Here $K_0 := \frac{1}{2} \left(-\partial_{x}^2 + x^2\right)$ is the quantum  Harmonic oscillator, the scale of  Hilbert spaces is defined  as 
usual by  $\cH^r = {\rm Dom }\left(K_0^r\right)$, and the base space $(\cH^0, \la \cdot, \cdot \ra)$ is $L^2(\R, \C)$   with its  standard  scalar product.
The perturbation  $V$ is chosen as the Weyl quantization of a  symbol belonging to the following class:
\begin{definition}
\label{symbol.ao2}
A function $f$ is a {\em  symbol of order } $\rho \in\R$ if  $f \in C^\infty(\R_x \times \R_\xi, \C)$ and 
               $\forall \alpha, \beta \in \N_0$, there exists $C_{\alpha, \beta} >0$ such that
$$
 \vert \partial_x^\alpha \, \partial_\xi^\beta f( x,\xi)\vert \leq C_{\alpha,\beta} \ (1 + |x|^2 + |\xi|^2)^{\rho-\frac{\beta + \alpha}{2}}  \ . 
$$
             We will write $f \in S^\rho_{\hos}$.
\end{definition}
   We endow $S^\rho_\hos$  with the family of seminorms
\begin{equation*}
\wp^\rho_j(f) := \sum_{|\alpha| + |\beta| \leq j}
\ \ \sup_{(x, \xi) \in \R^{2}} \frac{\left|\partial_x^\alpha \, \partial_\xi^\beta f(x,\xi)\right|}{ \left(1 + |x|^2 + |\xi|^2\right)^{\rho-\frac{\beta +\alpha}{2}} } \ , \qquad j \in \N \cup \{ 0 \} \ . 
\end{equation*}
Such seminorms turn $S^\rho_\hos$ into a Fr\'echet space.
If a symbol $f$ depends on additional parameters (e.g. it is time dependent), we ask that all the seminorms  are uniform w.r.t. such parameters.\\
 To a symbol $f \in S^\rho_\hos$ we associate the operator
$f(x, D)$  by standard Weyl quantization
 
       $$
   \Big( f(x, D) \psi \Big)(x) := \frac{1}{2\pi} \iint_{y, \xi \in \R} {\rm e}^{\im (x-y)\xi} \, f\left( \frac{x+y}{2}, \xi \right) \, \psi(y) \, \di y \di \xi \ . 
        $$

%We also need the following
%\begin{definition}
%    \label{smoothing}
%    %   Let $S\colon \cH^{+\infty} \to \cH^{-\infty}$.
%     We say  that $S\in\cL(\cH^{+\infty},\cH^{-\infty} )$ is $N$-smoothing if  $\forall \kappa \in \R$, it  can be extended  to an operator in   $\cL(\cH^{\kappa}, \cH^{\kappa+N})$.  When this is true  for  every $N\geq 0$, we say that $S$  is a smoothing  operator.
%    \end{definition}
%We are now ready to define pseudodifferential operators:

\begin{definition}
\label{pseudo.an}
  We say that $F\in \cA_\rho$ if it is a pseudodifferential operator
  with symbol of class $S^\rho_{\hos}$, i.e., if there
  exists $f \in S^\rho_{\hos}$ and $S$ smoothing (in the sense of Definition \ref{smoothing})  such that $F = f(x, D_x)+ S$.
\end{definition}

\begin{remark} With our numerology, the symbol of the harmonic oscillator $K_0$ is of order 1,  $\frac12({x^2 + \xi^2}) \in S^1_{\hos}$, and not of order 2 as typically in the literature.
\end{remark}

As an application of the abstract theorems, we
describe a class of operators  which are transporters.
This class, which we call {\em smooth T\"oplitz operators},  is easily described in terms of their matrix elements, which we now introduce.
We denote by  $\{ \be_n \}_{n \in \N}$  the Hermite basis, formed by the  (orthonormal) eigenvectors of the Harmonic oscillator $K_0$:
\begin{equation}
\label{har.K0}
K_0 \be_n = \left( n-\frac{1}{2}\right) \be_n , \quad 
 \norm{\be_n}_0 = 1 ,  \quad   n \in \N \ .
\end{equation}
To each operator $\fH \in \cL(\cH)$ we   associate its {\em matrix}
$(\fH_{mn}) _{m,n \in \N}$ with respect to the Hermite basis,  whose elements are given by 
\begin{equation}
\label{matrix}
\fH_{mn}:=  \la \fH \, \be_n, \be_m \ra \ , \qquad \forall m,n \in \N \ . 
\end{equation}

\begin{remark}
\label{rem.m.s}
If $\fH$ is selfadjoint, so is its matrix $(\fH_{mn}) _{m,n \in \N}$, in particular $\fH_{mn} = \bar{\fH_{nm}}$.
\end{remark}

\begin{definition}[Smooth T\"oplitz operators]
\label{smoothT}
 A linear operator $\fH \in \cL(\cH)$ is said a {\em  T\"oplitz operator} if the entries of its  matrix are constant along each diagonal, i.e. 
\begin{equation}
\label{smoothT.1}
 \fH_{m_1 n_1} = \fH_{m_2 n_2} , \quad \forall  m_1, n_1, m_2, n_2 \in \N  \colon \ \ \ m_1 - n_1 = m_2-  n_2 \ . 
\end{equation}
 A T\"oplitz operator is said {\em smooth} if its matrix elements decay fast  off diagonal, i.e. $\forall N > 0$, $\exists C_N >0$ such that
\begin{equation}
 \label{smoothT.2}
 \abs{\fH_{m n} } \leq \frac{C_N}{\la m-n\ra^N} \ ,  \qquad \forall m, n \in \N \  . 
\end{equation}
\end{definition}

\begin{example}  The shift operators 
$S$ and its adjoint $S^*$ are defined on the Hermite functions $\{\be_n\}_{n \geq 1}$ by
\begin{equation}
\label{def.S}
S \be_n  = \be_{n+1} \ , \quad \forall n \in \N \ , 
\qquad 
S^* \be_n = 
\begin{cases}
0 &  \mbox{ if } n = 1 \\
\be_{n-1} & \mbox{ if } n \geq 2 
\end{cases} \ . 
\end{equation}
 The action of $S$ (and  of $S^*$) is extended on all $\cH$ by linearity, giving
$S \psi =  \sum_{n \geq 1} \psi_{n}  \be_{n+1}$, where we defined  $\psi_n := \la \psi, \be_n \ra$ for $n \geq 1$. 
Their matrices are given by
$$
(S_{mn} )_{m,n \in \N} = 
 \begin{pmatrix}
0 &  &     &  \\
1 & 0 &   &  \\
  &  1 & 0  & \\
  &      &  \ddots  &\ddots 
\end{pmatrix}
, 
\qquad
(S^*_{mn} )_{m,n \in \N} = 
 \begin{pmatrix}
0 & 1 &     &  \\
 & 0 & 1  &  \\
  &   & 0  & 1\\
  &      &   &\ddots 
\end{pmatrix}
, 
$$
from which it is clear that both $S$ and $S^*$  are smooth T\"oplitz operators.
\end{example}

We prove in the following  that any smooth T\"oplitz operator is actually a pseudodifferential operator in $\cA_0$, see Lemma \ref{top.pse}.

As an application of the abstract theorems, we show that any smooth T\"oplitz operator becomes a transporter for the Harmonic oscillator once it is multiplied by an appropriate scalar time periodic function.

\begin{theorem}
\label{thm:har0}
Let  $ \fV(x, D) $ be a  selfadjoint and smooth T\"oplitz operator  (see Definition \ref{smoothT}). Take $m,n \in \N$,   $m > n $,  such that
the matrix element
$$
\fV_{m-n}:= \la \fV(x, D) \, \be_n, \be_m \ra \neq 0 \ . 
$$
Then 
\begin{equation}
\label{V.har0}
V(t,x, D):= \cos((m-n)t) \, \fV(x, D)
\end{equation}
 is a transporter for 
\eqref{har.osc}. More precisely, $\forall r \geq 0$  there exist  a
solution $\psi(t) \in \cH^r$ of \eqref{har.osc} and constants $C, T >0$ such that 
$$
\norm{\psi(t)}_r \geq C \la t \ra^r , \quad \forall t > T .
$$
\end{theorem}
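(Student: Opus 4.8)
The plan is to verify that $V(t,x,D) = \cos((m-n)t)\,\fV(x,D)$ satisfies Assumptions I, II and III of the abstract framework, so that Theorem \ref{thm:ab0} applies directly (with the choice $K_0 = \frac12(-\pa_x^2 + x^2)$ and the algebra $\cA$ of Definition \ref{pseudo.an}). Assumptions I and II are standard properties of the Weyl--Hörmander calculus adapted to the harmonic oscillator, and hold for the class $S^\rho_{\hos}$; the spectral assumption \eqref{AssA} holds with $\lambda = -\tfrac12$ by \eqref{har.K0}, and the Egorov property (Assumption II(ii)) follows because $e^{\im\tau K_0}$ conjugates the metaplectic rotation on phase space. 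The real work is Assumption III: computing the resonant average $\la V\ra$, showing it has nontrivial absolutely continuous spectrum in some interval, and producing a conjugate operator $A\in\cA_1$ with a Mourre estimate.

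\textbf{Step 1: identify the resonant average.} I would first use that $\fV$ is a smooth Töplitz operator to establish $\fV(x,D)\in\cA_0$ (this is Lemma \ref{top.pse}, which I may assume), and then compute $\la V\ra$ via the Hermite matrix elements. Since $K_0\be_k = (k-\tfrac12)\be_k$, the flow acts diagonally: $(e^{\im s K_0}\fV e^{-\im s K_0})_{pq} = e^{\im s(p-q)}\fV_{p-q}$. With $V(s) = \cos((m-n)s)\fV = \tfrac12(e^{\im(m-n)s}+e^{-\im(m-n)s})\fV$, the $s$-average over $[0,2\pi]$ of $e^{\im s(p-q)}\cos((m-n)s)$ selects precisely the diagonals $p-q = \pm(m-n)$, each with weight $\tfrac12$. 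Hence
\begin{equation}
\la V\ra = \tfrac12\,\fV_{m-n}\, S^{m-n} + \tfrac12\,\bar{\fV_{m-n}}\,(S^*)^{m-n},
\end{equation}
where $S$ is the shift of \eqref{def.S}; writing $\fV_{m-n} = \mu e^{\im\varphi}$ with $\mu = |\fV_{m-n}| > 0$, this is (up to the unitary phase rotation $\be_k\mapsto e^{\im k\varphi/ (m-n)}\be_k$, which commutes with $K_0$ and so is harmless) unitarily equivalent to $\tfrac{\mu}{2}(S^{m-n} + (S^*)^{m-n})$. This operator is a direct sum of $m-n$ copies (one per residue class mod $m-n$) of the half-line discrete Laplacian $\tfrac{\mu}{2}(S + S^*)$ on $\ell^2(\N)$, which has purely absolutely continuous spectrum equal to $[-\mu,\mu]$ (up to, possibly, an edge point; in any case $|\sigma_{ac}|>0$). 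This verifies Assumption III(i) on, say, $I_0 = (-\mu/2,\mu/2)$.

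\textbf{Step 2: the Mourre estimate.} For the half-line discrete Laplacian $\Delta = S + S^*$ on $\ell^2(\N)$ the classical conjugate operator is the generator of dilations, $A_0 = \tfrac{1}{2\im}(S N + N S^* - S^* N - N S)$ (equivalently $A_0 = \tfrac12(A_\Delta S + S^* A_\Delta)$-type expressions), for which $\im[\Delta, A_0] = 2 - \tfrac12(S^2 + (S^*)^2) + (\text{compact boundary term})$, which is $\geq \theta > 0$ on any spectral interval bounded away from the band edges $\pm 2$; here $N\be_k = k\be_k$. Pulling this back through the residue-class decomposition and the phase rotation, and multiplying by the appropriate $\mu/2$ scaling, produces a selfadjoint operator $A$ built from $S$, $S^*$ and $N$. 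The key point to check is that this $A$ lies in $\cA_1$: one shows that $N = K_0 + \tfrac12\in\cA_1$ and that $S,S^*\in\cA_0$ (Töplitz), so products like $NS$ lie in $\cA_1$; hence $A\in\cA_1$ as required, and the Mourre inequality \eqref{mourre.ab} holds on $I_0$ with a compact remainder coming from the boundary terms and from $\fV - \la V\ra$-type corrections already absorbed. This completes the verification of Assumption III(ii).

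\textbf{Conclusion and the main obstacle.} Once Assumptions I--III are verified, Theorem \ref{thm:ab0} yields the solution $\psi(t)\in\cH^r$ with $\|\psi(t)\|_r \geq C\la t\ra^r$ for $t\geq T$, which is exactly the claim. The main obstacle I anticipate is Step 2: the discrete Laplacian on the \emph{half-line} (rather than $\Z$) has a genuine boundary at $n=1$, so the naive dilation generator does not exactly commute with the relevant structures and one must handle the boundary contribution as a compact perturbation — and, more delicately, one must ensure the chosen conjugate operator $A$ actually belongs to the pseudodifferential algebra $\cA_1$ rather than merely being an abstract selfadjoint operator, since Theorem \ref{thm:ab0} requires $A\in\cA_1$. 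Verifying $A\in\cA_1$ is where the Töplitz-to-pseudodifferential correspondence (Lemma \ref{top.pse}) and the identification $N - \tfrac12 = K_0$ do the essential work; a cleaner alternative, which I would pursue if the direct construction is cumbersome, is to pass to the principal symbol $\la v\ra$ of $\la V\ra$ on phase space, exhibit an escape function $a(x,\xi)$ of order $1$ with $\{\la v\ra, a\}\geq c > 0$ on the relevant energy shell, and invoke the symbolic-calculus argument sketched in comment (7) after Theorem \ref{thm:ab} (following \cite{CdVS-R}, Section 6.2) to obtain \eqref{mourre.ab} automatically.
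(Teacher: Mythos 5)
Your strategy is the same as the paper's: reduce to Theorem \ref{thm:ab0}, compute the resonant average $\la V\ra = \tfrac12\fV_{m-n}S^{m-n} + \tfrac12\bar{\fV_{m-n}}S^{*(m-n)}$ exactly as in Lemma \ref{hw.HO3}, and verify the Mourre estimate with a dilation-generator-type conjugate operator built from $S^k$, $S^{*k}$ and $N := K_0 + \tfrac12$, which lies in $\cA_1$ by Lemma \ref{top.pse} and symbolic calculus. Your identification of $\la V\ra$ (after a harmless phase rotation) with $m-n$ copies of the free Jacobi matrix on $\ell^2(\N)$ is a clean and slightly more structural way to get Assumption III(i), whereas the paper exhibits an explicit Weyl sequence; both yield $\sigma(\la V\ra)\supseteq[-|\fV_{m-n}|,|\fV_{m-n}|]$ with a.c. spectrum.

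One concrete slip worth flagging: your written conjugate operator $A_0 = \tfrac{1}{2\im}(SN + NS^* - S^*N - NS)$ equals $\tfrac{1}{2\im}\bigl([S,N]+[N,S^*]\bigr) = -\tfrac{1}{2\im}(S+S^*)$, which is a multiple of $\Delta$ itself and therefore commutes with $\Delta$; it gives a zero Mourre commutator. The correct dilation generator is the \emph{anticommutator} form $\tfrac{1}{2\im}(SN + NS - S^*N - NS^*)$, which is precisely what the paper's $A$ in \eqref{A.ho2} reduces to when $\fV_k$ is real and $k=1$ (after identifying $K_0+\tfrac12 = N$); with this $A$ one computes $\im[\la V\ra, A] = 4k(|\fV_k|^2 - \la V\ra^2) + \fK$ with $\fK$ finite-rank, exactly the compact boundary term you anticipate. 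So the idea and the anticipated obstacle (the half-line boundary, and membership in $\cA_1$) are both correctly identified; only the explicit sign pattern in $A_0$ needs fixing.
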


The theorem follows applying  Theorem \ref{thm:ab0}. So we check that  Assumptions I-III are fulfilled. 
Regarding Assumption I,  it is the usual Weyl calculus for  symbols in $S^\rho_{\hos}$, see e.g. \cite{Shubin}.
Concerning  Assumption II,  one has   $\sigma(K_0) = \{ n - \frac12 \}_{n \in \N}$. Furthermore   Egorov theorem for the Harmonic oscillator \cite{ho2} states that  the map  $t \mapsto  e^{-\im t K_0} \fA e^{ \im t K_0} \in C^\infty(\T, \cA_\rho)$ for any $\fA \in \cA_\rho$ (use also the periodicity of the flow of $K_0$).
This can be seen e.g. by remarking that the   symbol of $e^{-\im t K_0} \fA e^{ \im t K_0} $  is  
$a\circ \phi_{\hos}^t$, where $a \in S^\rho_\hos$ is the symbol of $\fA$ and  $\phi^t_{\hos}$ is the time $t$ flow of the harmonic oscillator; explicitly 
\begin{align}
\label{symb}
\left(a\circ \phi^t_{\hos}\right)(x, \xi) = a(x \cos t + \xi \sin t, -x \sin t + \xi \cos t) \ .
\end{align}
\noindent{\bf Verification of Assumption III.} 
First we  show that smooth T\"oplitz  operators belong to $\cA_0$.   We  exploit  Chodosh's characterization \cite{chodosh}, which we now recall.
 Define  the discrete difference operator $\triangle$ on a function $M \colon \N \times \N \to \C$ by
$$
(\triangle M)(m,n) := M(m+1, n+1) - M(m,n)  \ ,
$$
and its powers $\triangle^\gamma$, $\gamma \in \N$,  by  $\triangle$ applied $\gamma$-times.
\begin{definition}[Symbol matrix]
A  function $M \colon \N \times \N \to \C$ will be said to be a {\em symbol matrix of order $\rho$} if for any $\gamma \in \N_0$, $ N \in \N$, there exists $C_{\gamma, N} >0$ such that 
\begin{equation}
\label{symb.mat}
 \abs{(\triangle^\gamma M)(m,n)} \leq C_{\gamma, N} \frac{(1 + m + n)^{\rho-|\gamma|}}{\la m-n \ra^N}  \ , \quad \forall m,n \in \N  \ . 
\end{equation}
\end{definition}
The connection between pseudodifferential operators  and symbol matrices is given by Chodosh's characterization:
\begin{theorem}[\cite{chodosh}]
\label{thm:chod}
An operator $\fH$ belongs to $\cA_\rho$ if and only if  its matrix  $M^{(\fH)}(m,n):= \fH_{mn}$ (as defined in \eqref{matrix})   is a symbol matrix of order $\rho$.
\end{theorem}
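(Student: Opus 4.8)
The plan is to prove the two implications separately, using the exact symbolic calculus for Weyl operators with symbols in $S^\rho_\hos$ together with the explicit spectral decomposition of $K_0$ on the Hermite basis. The starting observation is that the creation and annihilation operators $a^\dagger, a$ (with $a^\dagger \be_n = \sqrt{n}\,\be_{n+1}$, $a\,\be_n = \sqrt{n-1}\,\be_{n-1}$ in the present numerology, so that $K_0 = a^\dagger a + \tfrac12$) have symbols of order $1/2$, namely $\tfrac{1}{\sqrt2}(x \mp \im\xi)$, and that the discrete difference operator $\triangle$ acting on matrices corresponds, up to lower-order and off-diagonal-decaying corrections, to conjugating $\fH$ by the shift $S$: precisely, $(\triangle M^{(\fH)})(m,n) = M^{(S^*\fH S - \fH)}(m,n)$ plus boundary terms, and $S$ itself lies in $\cA_0$ (as shown in the Example and in Lemma \ref{top.pse}). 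Thus $\triangle$ is, modulo controlled errors, the commutator-type operation $\fH \mapsto S^*\fH S - \fH$, which lowers the pseudodifferential order by one. This is the dictionary that makes the equivalence plausible; turning it into a proof is the technical core.

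For the direction ``$\fH \in \cA_\rho \Rightarrow M^{(\fH)}$ is a symbol matrix of order $\rho$'', I would proceed as follows. Write $\fH = f(x,D) + S$ with $f \in S^\rho_\hos$ and $S$ smoothing; the smoothing part contributes a matrix decaying faster than any power of $(1+m+n)$ and of $\la m-n\ra$ (this follows from $S\colon \cH^{-r}\to\cH^{r}$ for all $r$ and the fact that $\norm{\be_n}_r \sim n^r$), so it is harmless. For $f(x,D)$ itself, estimate $\fH_{mn} = \la f(x,D)\be_n, \be_m\ra$ by inserting the integral kernel of the Weyl quantization and using the known asymptotics / decay of Hermite functions, or more efficiently by a repeated integration-by-parts / commutator argument: off-diagonal decay $\la m-n\ra^{-N}$ comes from commuting $f(x,D)$ with suitable polynomials in $a, a^\dagger$ (each commutator $[f(x,D), a]$ has symbol of order $\rho - 1/2$, hence is relatively bounded), while the order-$\rho$ bound $(1+m+n)^{\rho}$ comes from $\la f(x,D) K_0^{-\rho/2}$-type estimates using that $f \cdot (x^2+\xi^2)^{-\rho/2} \in S^0_\hos$ is bounded on $L^2$. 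The $\gamma$-fold difference bound is obtained the same way after observing that $\triangle^\gamma$ corresponds to $\gamma$-fold application of the order-lowering operation $\fH\mapsto S^*\fH S-\fH$, so $\triangle^\gamma M^{(\fH)}$ is (modulo faster-decaying terms) the matrix of an operator in $\cA_{\rho-\gamma}$, to which the $\gamma=0$ estimate applies.

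For the converse, ``$M$ a symbol matrix of order $\rho$ $\Rightarrow$ the operator $\fH$ with $\fH_{mn}=M(m,n)$ lies in $\cA_\rho$'', the strategy is to reconstruct a symbol. One way: split $M$ along diagonals, $M = \sum_{j\in\Z} M_j$ where $M_j$ is supported on $\{m-n=j\}$, and use the T\"oplitz-type building blocks $S^j$ (or $(S^*)^{|j|}$) times a diagonal operator $D_j := \mathrm{diag}(M(n+j,n))_n$; the symbol of $D_j$ is a radial symbol of order $\rho$ in $S^\rho_\hos$ by Chodosh's scalar criterion (the condition \eqref{symb.mat} with $N$ large controls the $j$-sum, and the $\triangle^\gamma$ bounds on the diagonals give the derivative bounds on the reconstructed symbol). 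Summability in $j$ uses the $\la m-n\ra^{-N}$ decay, and the order is preserved because each $S^j$ is order $0$. Alternatively, and perhaps cleanest, I would cite or re-derive Chodosh's explicit quantization formula expressing the symbol $f(x,\xi)$ as a convergent series $\sum_{m,n} M(m,n)\, \be_m(\cdot)\overline{\be_n(\cdot)}$-type Wigner transform and verify directly that \eqref{symb.mat} implies $f \in S^\rho_\hos$.

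The main obstacle is the bookkeeping in the first direction: controlling simultaneously the polynomial growth in $m+n$ (the ``order'') and the rapid decay in $m-n$ (the ``off-diagonal'' behaviour), and tracking how these interact under the discrete differences $\triangle^\gamma$. The cleanest route is to set up once and for all the identities relating $\triangle$, the shifts $S, S^*$, and the operators $a, a^\dagger$, prove that each of these manipulations changes the pseudodifferential order by the expected amount while preserving membership in $\cA$, and then let symbolic calculus (Assumption I) do the rest; but the boundary terms at $n=1$ in \eqref{def.S} and the square-root weights $\sqrt{n}$ produce error matrices that must be shown to be smoothing, which is the fiddly part. Since this theorem is attributed to Chodosh \cite{chodosh}, in the paper itself I expect the proof to be short — a citation plus, at most, a remark reconciling conventions — rather than a full re-derivation.
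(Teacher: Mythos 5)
The paper does not prove this theorem: the attribution \cite{chodosh} in the theorem header is a citation, and the result is quoted without any proof or even a reconciliation-of-conventions remark. You correctly anticipated exactly this in your closing paragraph, so there is nothing in the paper's text to compare your sketch against. For what it is worth, your outline is a sensible blueprint for re-deriving Chodosh's characterization in the present setting: the forward direction via commutation with $a,a^\dagger$ (to get off-diagonal decay) and with powers of $K_0$ (to get the $(1+m+n)^\rho$ weight) is the standard way to see that isotropic pseudodifferential operators have rapidly off-diagonal-decaying, polynomially weighted Hermite matrices, and the identification of $\triangle$ with $\fH \mapsto S^*\fH S - \fH$ (modulo boundary/smoothing errors) is the right dictionary for the $\triangle^\gamma$ bounds. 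The converse direction is where Chodosh's actual work lies — reconstructing a genuine $S^\rho_\hos$ symbol from the matrix — and your two proposed routes (diagonal decomposition against T\"oplitz blocks, or a Wigner-type reconstruction formula) are both in the spirit of his argument; a full proof would need to verify the summability in $j$ and the derivative estimates on the reconstructed symbol rather carefully, and would also have to show the leftover piece is smoothing in the sense of Definition \ref{smoothing}. None of this is carried out in the paper, which is consistent with the theorem being cited as external input.
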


As a direct consequence we have the following result:
\begin{lemma}
\label{top.pse}
Any smooth T\"oplitz operator is a pseudodifferential operator in $\cA_0$.
\end{lemma}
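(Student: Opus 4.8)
The plan is to apply Chodosh's characterization (Theorem~\ref{thm:chod}) directly: a bounded operator $\fH$ lies in $\cA_0$ if and only if its matrix $M^{(\fH)}(m,n) = \fH_{mn}$ is a symbol matrix of order $0$, i.e. satisfies the estimates \eqref{symb.mat} with $\rho = 0$. So the whole task reduces to checking that the matrix of a smooth T\"oplitz operator is a symbol matrix of order $0$.

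First I would use the T\"oplitz property \eqref{smoothT.1}: the matrix element $\fH_{mn}$ depends only on the difference $m - n$, so I can write $\fH_{mn} = c_{m-n}$ for a sequence $(c_k)_{k \in \Z}$ (with the convention that the relevant indices range over $\N$). The smoothness condition \eqref{smoothT.2} then says exactly that $\abs{c_k} \leq C_N \la k \ra^{-N}$ for every $N$, i.e. the sequence $(c_k)$ is rapidly decaying. Next I would compute the discrete difference operator $\triangle$ applied to such a matrix: since $M^{(\fH)}(m+1,n+1) = c_{(m+1)-(n+1)} = c_{m-n} = M^{(\fH)}(m,n)$, the T\"oplitz property gives $(\triangle M^{(\fH)})(m,n) = 0$ identically, and hence $(\triangle^\gamma M^{(\fH)})(m,n) = 0$ for every $\gamma \geq 1$.

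Therefore the symbol-matrix estimate \eqref{symb.mat} needs to be verified only for $\gamma = 0$: one must show $\abs{\fH_{mn}} = \abs{c_{m-n}} \leq C_{0,N} \, (1+m+n)^{0} \la m-n\ra^{-N} = C_{0,N} \la m-n \ra^{-N}$ for all $N$, which is precisely the defining bound \eqref{smoothT.2} of a smooth T\"oplitz operator. For $\gamma \geq 1$ the left-hand side of \eqref{symb.mat} is zero, so the inequality holds trivially with any constant. Hence $M^{(\fH)}$ is a symbol matrix of order $0$, and Theorem~\ref{thm:chod} yields $\fH \in \cA_0$.

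There is essentially no obstacle here; the content of the lemma is entirely carried by Chodosh's theorem, and the role of the T\"oplitz structure is simply to kill all discrete derivatives so that only the $\gamma = 0$ case survives, where the smoothness hypothesis is exactly what is needed. The one small point worth stating carefully is the index bookkeeping: $\triangle$ shifts both indices up by one and stays within $\N \times \N$, so the identity $(\triangle M^{(\fH)})(m,n) = 0$ holds for all $m, n \in \N$ without boundary issues, and the rapid off-diagonal decay is uniform in the diagonal variable precisely because $c_{m-n}$ does not depend on it.
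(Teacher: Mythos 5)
Your proof is correct and follows essentially the same route as the paper: apply Chodosh's characterization (Theorem~\ref{thm:chod}), observe that the T\"oplitz property~\eqref{smoothT.1} makes $\triangle M^{(\fH)}$ vanish identically so the symbol-matrix estimate~\eqref{symb.mat} is trivial for $\gamma \geq 1$, and note that for $\gamma = 0$ it is exactly the rapid off-diagonal decay~\eqref{smoothT.2}. The only difference is that you spell out the index bookkeeping and the reduction to a single sequence $c_{m-n}$, which the paper leaves implicit.
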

\begin{proof}
We use Theorem \ref{thm:chod}. Let $\fH$ be smooth T\"oplitz and 
 put   $M^{(\fH)}(m,n):= \fH_{mn}$.
Then \eqref{symb.mat} holds with $\rho = \gamma = 0$ by \eqref{smoothT.2}. By \eqref{smoothT.1} one has 
$\triangle M^{(\fH)} = 0 $;  so \eqref{symb.mat} holds also $\forall \gamma \geq 1$.
\end{proof}

In particular  $V(t, x, D) = \cos((m-n)t) \fV(x, D)$ belongs  to $C^\infty(\T, \cA_0)$, which is the first required property of Assumption III. 
\begin{remark}
\label{rem:Sk}
The shift operators $S, S^*$, defined in \eqref{def.S},  belong to $\cA_0$ being smooth T\"oplitz. Also their (integer) powers 
$S^k$, $S^{*k}$, given for $k \in \N$ by 
\begin{equation}
S^k \be_n  = \be_{n+k} \ , \quad \forall n \in \N \ , 
\qquad 
S^{*k} \be_n = 
\begin{cases}
0 &  \mbox{ if } n \leq  k \\
\be_{n-k} & \mbox{ if } n \geq k+1  
\end{cases} \ 
\end{equation}
are smooth T\"oplitz, so in $\cA_0$.
\end{remark}

Next we compute the  resonant average of $V(t, x, D)$.

\begin{lemma}
\label{hw.HO3}
Let $V(t,x, D)$ as in \eqref{V.har0}. Its  resonant average $\la V \ra$ (see \eqref{res.av}) is 
\begin{equation}
\label{H0.har}
\la V \ra =  \frac12  \left(\fV_{k}\, S^k + \bar{\fV_k} \, S^{*k} \right) \ , \qquad k:= m - n  \in \N  \ , 
\end{equation}
where  $S \in \cA_0$ is defined in \eqref{def.S} and $\fV_k := \fV_{m-n}:= \la \fV \, \be_n, \be_m \ra \in \C$.
\end{lemma}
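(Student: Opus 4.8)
The plan is to compute the resonant average directly from its definition \eqref{res.av}, using the fact that the flow of $K_0$ acts on the Hermite basis in a very explicit way. First I would record that, since $K_0 \be_n = (n - \tfrac12)\be_n$, for any operator $\fH$ one has in matrix form
\begin{equation}
\label{flow.matrix}
\big(e^{\im s K_0} \, \fH \, e^{-\im s K_0}\big)_{pq} = e^{\im s (p - q)} \, \fH_{pq} \ , \qquad p,q \in \N \ .
\end{equation}
Therefore, writing $V(t,x,D) = \cos(kt)\,\fV(x,D)$ with $k = m-n$ and using $\cos(k(t+s)) = \tfrac12\big(e^{\im k(t+s)} + e^{-\im k(t+s)}\big)$, the matrix elements of the resonant average at $t=0$ (which by Lemma \ref{prop:av}(iii) is all we need) are
\begin{equation}
\label{res.matrix}
\la V \ra_{pq} = \frac{1}{2\pi}\int_0^{2\pi} e^{\im s(p-q)} \cdot \frac{e^{\im k s} + e^{-\im k s}}{2} \, \fV_{pq}\, \di s \ = \ \frac{\fV_{pq}}{2}\Big( \mathbbm{1}_{\{p - q = -k\}} + \mathbbm{1}_{\{p-q = k\}}\Big) \ ,
\end{equation}
by orthogonality of the characters $s \mapsto e^{\im \ell s}$ on $[0,2\pi]$ (here $p - q \in \Z$ because $p,q$ are integers). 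Since $\fV$ is T\"oplitz, $\fV_{pq}$ depends only on $p-q$; writing $\fV_k := \fV_{pq}$ whenever $p-q = k$ (and noting $\fV_{-k} = \overline{\fV_k}$ by Remark \ref{rem.m.s}, as $\fV$ is selfadjoint), \eqref{res.matrix} says that $\la V\ra$ has entry $\tfrac12 \fV_k$ on the $k$-th superdiagonal (i.e. $p - q = -k$, $q = p+k$) and entry $\tfrac12\overline{\fV_k}$ on the $k$-th subdiagonal, and zero elsewhere.

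It remains to identify these matrices with $S^k$ and $S^{*k}$. From the formulas for $S^k$ and $S^{*k}$ in Remark \ref{rem:Sk} one reads off $(S^k)_{pq} = \mathbbm{1}_{\{p = q + k\}}$ and $(S^{*k})_{pq} = \mathbbm{1}_{\{q = p + k\}}$; comparing with the two surviving diagonals in \eqref{res.matrix} gives
\begin{equation}
\label{res.conclusion}
\la V \ra = \frac12\Big( \fV_k\, S^k + \overline{\fV_k}\, S^{*k}\Big) \ ,
\end{equation}
which is \eqref{H0.har}. Finally, $S^k, S^{*k} \in \cA_0$ by Remark \ref{rem:Sk}, so $\la V\ra \in \cA_0$ as well; alternatively this also follows from Lemma \ref{prop:av}(ii). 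The equality of operators (not just of matrices) is legitimate because both sides are bounded operators on $\cH$ whose matrix elements in the orthonormal Hermite basis coincide.

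This argument is essentially a routine computation; the only point requiring a little care is the interchange of the integral $\tfrac{1}{2\pi}\int_0^{2\pi}$ with the passage to matrix elements in \eqref{res.matrix}, and the fact that the weak (matrix-element) identity upgrades to an operator identity — both are justified because $\fV \in \cA_0 \subset \cL(\cH)$ and the integrand in \eqref{res.av} is a norm-continuous, $2\pi$-periodic $\cL(\cH)$-valued function, so the Bochner integral commutes with taking the bounded bilinear pairing $\la \cdot\, \be_n, \be_m\ra$. I do not anticipate a genuine obstacle here.
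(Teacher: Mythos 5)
Your proposal is correct and follows essentially the same approach as the paper: conjugate $\fV$ by $e^{\im s K_0}$ on the Hermite basis, extract matrix elements, and kill all diagonals except $p-q=\pm k$ by orthogonality of characters (the paper phrases this via rank-one projectors $\Pi_j \fV \Pi_\ell$, which is the same computation). One minor clerical slip: after \eqref{res.matrix} you assign $\tfrac12\fV_k$ to the $k$-th superdiagonal $p-q=-k$ and $\tfrac12\overline{\fV_k}$ to the subdiagonal $p-q=k$, but since $\fV_{pq}=\fV_k$ precisely when $p-q=k$, the roles are reversed — the subdiagonal carries $\tfrac12\fV_k$ and the superdiagonal carries $\tfrac12\overline{\fV_k}$; your final identification with $S^k$ (subdiagonal) and $S^{*k}$ (superdiagonal) is nonetheless correct, so the stated conclusion \eqref{res.conclusion} is right.
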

\begin{proof}
For $\ell \in \N$, denote by 
$\Pi_\ell \vf := \la \vf, \be_\ell \ra \, \be_\ell$  
the projector on the Hermite function $\be_{\ell}$.
 Clearly 
$$
e^{\im s K_0} \, \Pi_\ell   =  \Pi_\ell  \, e^{\im s K_0}  =  e^{\im s (\ell - \frac12)}  \, \Pi_\ell , \quad \forall \ell \in \N \ . 
$$
From now on we simply write $\fV\equiv \fV(x,D)$. Using this identity and writing  $\uno = \sum_{\ell \geq 1} \Pi_\ell$  we get 
\begin{align*}
e^{\im s K_0} \, \fV \, e^{-\im s K_0} 
&  = \sum_{j , \ell \geq 1}  e^{\im s(j - \ell)} \Pi_j \, \fV \, \Pi_\ell =  \sum_{j , \ell \geq 1}  e^{\im s(j - \ell)} \, \la \cdot , \be_\ell \ra \, \la \fV \be_\ell, \be_j \ra \, \be_j  \ . 
\end{align*}
Now we compute, with $k := m-n \in \N$, 
\begin{align*}
\la V \ra & = \frac{1}{2\pi} \int_0^{2\pi} \cos(k s) \, 
e^{\im s K_0} \, \fV \, e^{-\im s K_0} \di s
 = \sum_{j , \ell \geq 1}  \la \fV \be_\ell, \be_j \ra  \, \la \cdot , \be_\ell \ra \,  \be_j  \,\frac{1}{2\pi} \int_0^{2\pi} \cos(k s) \, e^{\im s(j - \ell)} \, \di s \\
 & = \frac12 \sum_{\ell \geq 1} \la \fV \be_\ell, \be_{\ell + k} \ra   \la \cdot , \be_\ell \ra \,  \be_{\ell +k} + 
\frac12  \sum_{\ell \geq k+1}  \la \fV \be_\ell, \be_{\ell - k} \ra  \la \cdot , \be_\ell \ra \,  \be_{\ell -k } 
  =
\frac12  \fV_k \, S^k + \frac12\bar{\fV_k} \, S^{*k} 
\end{align*}
where in the last line we used  $\fV_{-k} = \la \fV \be_\ell, \be_{\ell - k} \ra  = \bar{\la \fV \be_{\ell-k}, \be_{\ell} \ra } = \bar{\fV_k}$ being $\fV$ selfadjoint and smooth T\"oplitz  (see Remark \ref{rem.m.s}).
\end{proof}

Now define the selfadjoint operator 
\begin{equation}
\label{A.ho2}
A := \frac{\fV_k}{\im} \, (K_0 + \frac12 ) \, S^k -
 \frac{\bar{\fV_k}}{\im} \, S^{*k} \,  (K_0 + \frac12 ) 
 -  \frac{\bar{\fV_k}}{\im} \, (K_0 + \frac12 ) \, S^{*k}
+  \frac{{\fV_k}}{\im} \, S^k \,  (K_0 + \frac12 ) \ ,
\end{equation}
which belongs to  $\cA_1$ by symbolic calculus as $K_0 \in \cA_1$ and $S, S^* \in \cA_0$ (see Remark \ref{rem:Sk}).\\
The next lemma verifies Assumption III.
\begin{lemma}
\label{lem:har.mourre}
Assume that $\fV_k \neq 0$. 
The following holds true:
\begin{itemize}
\item[(i)] The spectrum of the operator $H_0:= \la V \ra$ fulfills $\sigma(H_0)  \supseteq \left[- |\fV_k|,  \,  |\fV_k|   \right]$.
\item[(ii)] 
For any interval  $I_0 \subset  \left[- |\fV_k|,  \,  |\fV_k|   \right]$, any $g_{I_0} \in C^\infty_c(\R, \R_{\geq 0})$ with $g_{I_0} \equiv 1$ over $I_0$
and ${\rm supp }\, g_{I_0} \subset  \left[- |\fV_k|,  \,  |\fV_k|   \right] $, there exist $\theta >0$  and $\fK$ compact operator such that 
$$
g_{I_0}(H_0) \, \im [H_0, A] \, g_{I_0}(H_0) \geq \theta \,  g_{I_0}(H_0)^2  + \fK  \ .
$$
Here  $A$ is defined in \eqref{A.ho2}.
\end{itemize}
\end{lemma}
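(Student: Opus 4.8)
The plan is to reduce the statement to a textbook Mourre estimate for the free discrete half-line Laplacian. First I would notice that both $H_0 = \la V\ra = \tfrac12(\fV_k S^k + \bar\fV_k S^{*k})$ (see \eqref{H0.har}) and the candidate conjugate operator $A$ of \eqref{A.ho2} leave invariant each of the $k$ pairwise orthogonal subspaces
$$
\cH_r := \overline{\mathrm{span}}\,\{ \be_{r+jk} : j \geq 0 \}, \qquad r = 1, \dots, k,
$$
whose direct sum is $\cH$. Indeed $S^k$ and $S^{*k}$ shift the Hermite index by $\pm k$ (with $\be_n := 0$ for $n \leq 0$) and $K_0 + \tfrac12$ is diagonal with eigenvalue $n$ on $\be_n$. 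Identifying $\cH_r$ with $\ell^2(\N_0)$ through $\be_{r+jk} \leftrightarrow e_j$ turns $S^k|_{\cH_r}$ into the unilateral shift $S$, $S^{*k}|_{\cH_r}$ into $S^*$, and $K_0+\tfrac12$ into $r\,\uno + kP$ with $P e_j := j e_j$, so that (using $\fV_{-k} = \bar\fV_k$, Remark \ref{rem.m.s})
$$
H_0|_{\cH_r} \simeq \tfrac12\big(\fV_k S + \bar\fV_k S^*\big), \qquad
A|_{\cH_r} \simeq \frac{1}{\im}\Big(\fV_k\big(2r S + k(PS+SP)\big) - \bar\fV_k\big(2r S^* + k(PS^*+S^*P)\big)\Big).
$$
Finally, writing $\fV_k = |\fV_k|\,e^{\im\varphi}$ and conjugating with the diagonal gauge $U e_j := e^{\im j\varphi} e_j$ (which satisfies $U^* S U = e^{-\im\varphi} S$ and commutes with $P$) removes all phase factors: on each $\cH_r$ the operators $H_0$ and $A$ become unitarily equivalent to $|\fV_k|\,L$ and $|\fV_k|\,\tilde A_r$, where $L := \tfrac12(S+S^*)$ is the free discrete half-line Laplacian and $\tilde A_r := \frac{1}{\im}\big(2r(S-S^*) + k(PS+SP-PS^*-S^*P)\big)$ is, up to the bounded lower-order term $\frac{2r}{\im}(S-S^*)$, the classical conjugate operator for $L$.

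Part (i) is then immediate, since $\sigma(L) = [-1,1]$ (the free half-line Laplacian has purely absolutely continuous spectrum $[-1,1]$): hence $\sigma(H_0) = \bigcup_{r=1}^k \sigma(H_0|_{\cH_r}) = [-|\fV_k|,\,|\fV_k|]$, which in particular gives the claimed inclusion (and, incidentally, that $H_0$ has purely absolutely continuous spectrum).

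For part (ii) the heart of the matter is the commutator identity
\begin{equation}
\label{eq:comm.plan}
\im[H_0, A] = 4k\big(|\fV_k|^2\,\uno - H_0^2\big) + \fK_0, \qquad \fK_0 := 2|\fV_k|^2 \sum_{r=1}^{k-1} (r-k)\,\Pi_{\be_r},
\end{equation}
where $\Pi_{\be_r}$ denotes the orthogonal projection onto $\be_r$, so that $\fK_0$ has rank $k-1$, hence is compact and selfadjoint. By the reduction above it suffices to compute $\im[L,\tilde A_r]$ on $\ell^2(\N_0)$: this is the standard discrete-Laplacian commutator computation, carried out using $S^*S = \uno$, $SS^* = \uno - \Pi_{e_0}$, $[P,S] = S$, $[P,S^*] = -S^*$ and $S^2 + S^{*2} = 4L^2 - 2\,\uno + \Pi_{e_0}$, and it gives $\im[L,\tilde A_r] = 4k(\uno - L^2) + 2(r-k)\Pi_{e_0}$. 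Transporting back through $U$ (which fixes $e_0$) and using $L = |\fV_k|^{-1} H_0|_{\cH_r}$, then summing over $r$, yields \eqref{eq:comm.plan}. To conclude, I would apply $g_{I_0}(H_0)$ on both sides of \eqref{eq:comm.plan}: by functional calculus $g_{I_0}(H_0)\,\im[H_0,A]\,g_{I_0}(H_0) = 4k\,g_{I_0}(H_0)\big(|\fV_k|^2 - H_0^2\big)g_{I_0}(H_0) + \fK$ with $\fK := g_{I_0}(H_0)\,\fK_0\,g_{I_0}(H_0)$ compact; and since $4k(|\fV_k|^2 - \lambda^2) \geq \theta := 4k(|\fV_k|^2 - c^2)$ on $\mathrm{supp}\,g_{I_0}$, with $c := \max\{|\lambda| : \lambda \in \mathrm{supp}\,g_{I_0}\} < |\fV_k|$, one gets $g_{I_0}(H_0)\big(|\fV_k|^2 - H_0^2\big)g_{I_0}(H_0) \geq \tfrac{\theta}{4k}\,g_{I_0}(H_0)^2$, i.e. the asserted Mourre estimate with this $\theta > 0$ and this compact $\fK$.

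I expect the only genuine work to be bookkeeping: carefully tracking the finite-rank ``boundary'' corrections coming from the defect $SS^* = \uno - \Pi_{e_0}$ of the unilateral shift, and verifying that all phases cancel after the gauge $U$, so that the explicit operator $A$ in \eqref{A.ho2} is exactly the conjugate operator for $H_0$. (A minor point: for $\theta$ to be strictly positive one reads the hypothesis so that $\mathrm{supp}\,g_{I_0}$ stays away from the endpoints $\pm|\fV_k|$, which is harmless since to verify Assumption III one only needs one such interval $I_0$.)
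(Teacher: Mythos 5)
Your proposal is correct, and for part (i) it takes a genuinely different route from the paper. The paper proves $\sigma(H_0)\supseteq[-|\fV_k|,|\fV_k|]$ by constructing, for each $\rho\in\R$, an explicit Weyl sequence $\psi^{(n)}=\tfrac{1}{\sqrt n}\sum_{\ell=1}^n e^{\im\rho\ell}\be_\ell$ for the value $\Re(\fV_k e^{-\im\rho k})$; you instead decompose $\cH$ into the $k$ invariant blocks $\cH_r$, gauge away the phase of $\fV_k$, and identify each block with $\ell^2(\N_0)$ carrying the free half-line discrete Laplacian $L=\tfrac12(S+S^*)$. Your reduction is structurally cleaner and yields more: you get equality $\sigma(H_0)=[-|\fV_k|,|\fV_k|]$ and that the spectrum is purely absolutely continuous, neither of which the Weyl-sequence argument gives directly. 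For part (ii), the two proofs converge: the paper computes $\im[H_0,A]=4k(|\fV_k|^2-H_0^2)+2|\fV_k|^2(K_0+\tfrac12-k)\Pi_{\leq k}$ globally using the algebraic identities $[S^k,K_0]=-kS^k$, $S^{*k}S^k=\uno$, $S^kS^{*k}=\uno-\Pi_{\leq k}$, whereas you derive the same identity blockwise from the standard conjugate-operator computation for $L$ (with $\tilde A_r$ being $PS+SP-PS^*-S^*P$ up to a bounded lower-order term), and the compact term $\fK_0=2|\fV_k|^2\sum_{r=1}^{k-1}(r-k)\Pi_{\be_r}$ appears transparently as the union of the $k$ boundary projectors $\Pi_{e_0}$. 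I verified your blockwise commutator calculation and it agrees with the paper's (since the $\ell=k$ term of $\Pi_{\leq k}(K_0+\tfrac12-k)$ vanishes). The final step — applying $g_{I_0}(H_0)$ and invoking the spectral theorem with $\theta=\inf\{4k(|\fV_k|^2-\lambda^2):\lambda\in\supp g_{I_0}\}$ — is identical in both proofs, as is the small caveat you flag: $\theta>0$ requires $\supp g_{I_0}$ to avoid the endpoints $\pm|\fV_k|$, which the paper asserts implicitly and which is harmless for the application.
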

\begin{proof}
$(i)$ 
Let $\tf(\rho):= {\rm Re}(\fV_k \, e^{- \im \rho k})$.
We shall prove that $\tf(\rho) \in \sigma(H_0)$  $\forall \rho \in \R$, from which the claim follows. 
As $H_0$ is selfadjoint,  it is enough to construct a Weyl sequence for $\tf(\rho)$, i.e. a sequence $(\psi^{(n)})_{n \geq 1}$ with $\norm{\psi^{(n)}}_0 = 1$ $\, \forall n$ and $\norm{(H_0  - \tf(\rho)) \psi^{(n)}}_0 \to 0$ as $n \to \infty$.
We put 
$$
\psi^{(n)} := \frac{1}{\sqrt n} \sum_{\ell = 1}^n e^{\im \rho \ell} \be_\ell \ .
$$
Then  $\norm{\psi^{(n)}}_0 = 1$ $\, \forall n$ and a direct computation shows that for $n > k$
$$
H_0 \psi^{(n)} = 
\frac{1}{\sqrt{n}} \frac{\bar \fV_k}{2} e^{\im \rho k} \sum_{m=1}^k e^{\im \rho m} \, \be_m 
+
\frac{1}{\sqrt{n}}\tf(\rho)  \sum_{m=k+1}^{n-k}  e^{\im \rho m} \, \be_m 
+
\frac{1}{\sqrt{n}} \frac{ \fV_k}{2} e^{-\im \rho k} \sum_{m=n-k+1}^{n+k} e^{\im \rho m} \, \be_m  \ .
$$
Thus one finds a constant $C_k >0$ such that
$$
\norm{(H_0  - \tf(\rho)) \psi^{(n)}}_0 \leq \frac{C_k}{\sqrt n} \to 0  \quad \mbox{ as }  n \to \infty  \ , 
$$
proving that $\psi^{(n)}$ is a Weyl sequence; by Weyl criterium $ \tf(\rho) \in \sigma(H_0)$.
%The spectrum of   the right shift  $S$ on $\ell^2(\N)$ fills the unit disc,   $\sigma(S) = \{ \lambda  \in \C \colon |\lambda | \leq 1 \}$. Define $f\colon  \C \to \C$ by  $f(z) :=\frac12( \fV_k z^k + \bar \fV_k \bar z^k ) = \Re(\fV_k z^k)$, then $H_0 = f(S)$.  The spectral mapping theorem guarantees that
%$\sigma(H_0) = f(\sigma(S))$, hence the claim.

\noindent $(ii)$  First note that, by \eqref{har.K0} and \eqref{def.S}, 
one has  $\forall k \in \N$ 
\begin{align}
\label{comm.id.har}
[S^k, K_0] = - k S^k , \qquad [S^{*k}, K_0] = k S^{*k} , \qquad [S^{*k}, S^k ] = \Pi_{\leq k }  \\
S^k S^{*k} = \uno - \Pi_{\leq k} \, \qquad S^{*k} S^k  = \uno 
\end{align}
where $\Pi_{\leq k}:= \sum_{\ell = 1}^k \Pi_\ell$ is the projector on the Hermite modes with index $\leq k$. 
Using \eqref{comm.id.har}  a direct computation gives 
%\begin{align*}
%&[H_0, \frac{\fV_1}{\im} \, (K_0 + \frac12 ) \, S  ]
%% = - \frac{\fV_1^2}{2\im} S^2 + \frac{|\fV_1|^2}{2\im} S^* S + \frac{|\fV_1|^2}{2\im}  (K_0+ \frac12) \Pi_1 
%= 
%-\frac{\fV_1^2}{2\im} S^2 + \frac{|\fV_1|^2}{2\im} 
% + \frac{|\fV_1|^2}{2\im} \Pi_1 \\
%&[H_0, -\frac{\bar{\fV_1}}{\im} \, S^* \,  (K_0 + \frac12 )] 
%% = 
% % -\frac{|\fV_1|^2}{4}  \Pi_1 (K_0+\frac12) - \frac{|\fV_1|^2}{4}  S^* %S + \frac{\bar{\fV_1}^2}{4} S^{*2} 
%  = 
% \frac{|\fV_1|^2}{2\im}  \Pi_1 + \frac{|\fV_1|^2}{2\im}  
%- \frac{\bar{\fV_1}^2}{2\im} S^{*2}\\
%&[H_0,   -\frac{\bar{\fV_1}}{\im} \,  (K_0 + \frac12 )\, S^*]  = 
%%- \frac{|\fV_1|^2}{4}  S S^* -   \frac{|\fV_1|^2}{4}  (K_0+\frac12) %\Pi_1  +\frac{\bar{\fV_1}^2}{4} S^{*2} = 
%\frac{|\fV_1|^2}{2\im} -
% \frac{\bar{\fV_1}^2}{2\im} S^{*2} \\
% &[H_0, \frac{{\fV_1}}{\im} \, S \,  (K_0 + \frac12 )]  = 
%%-  \frac{\fV_1^2}{4}S^2 + \frac{|\fV_1|^2}{4}   \Pi_1 (K_0+
%%\frac12)  + \frac{|\fV_1|^2}{4}  S S^{*} = 
%- \frac{\fV_1^2}{2\im} S^2 + \frac{|\fV_1|^2}{2\im}
%\end{align*}
%getting
\begin{align*}
\im [H_0 , A ] & = 	k\big( 2 |\fV_k|^2 - \fV_k^2 S^{2k}  - \bar{\fV}^2_k S^{*2k} - |\fV_k|^2  \Pi_{\leq k} \big) + 2|\fV_k|^2 (K_0 + \frac12) \Pi_{\leq k} \\
&
= 		4k \big(|\fV_k|^2 - H_0^2\big)  +  2|\fV_k|^2 (K_0 + \frac12 - k) \Pi_{\leq k} \ . 
\end{align*} 
Clearly $\fK:= 2|\fV_k|^2 (K_0 + \frac12 - k) \Pi_{\leq k} $ is  compact, being finite rank. \\
Next put $\tilde f(\lambda) = 4k (|\fV_k|^2 - \lambda^2)$ 
getting  $\forall \vf \in \cH$ 
\begin{equation}
\label{mourre.har1}
\la g_{I_0}(H_0) \, \im [H_0, A]  \,  g_{I_0}(H_0) \vf, \vf \ra = \la g_{I_0}(H_0) \, \tilde f(H_0) \,  g_{I_0}(H_0) \vf, \vf \ra + \la g_{I_0}(H_0) \, \fK  \,  g_{I_0}(H_0) \vf, \vf \ra \ . 
\end{equation}
Note that $\tilde f$ is strictly positive in the interior of $ \left[- |\fV_k|,  \,  |\fV_k|   \right]$; 
we put
$$
\theta := \inf \{ \tilde f(\lambda) \colon \lambda \in {\rm supp }\, g_{I_0} \} > 0 \ .
$$
With this information we apply the spectral theorem and get
\begin{align}
\notag
\la g_{I_0}(H_0) \, \tilde f(H_0) \,  g_{I_0}(H_0) \vf, \vf \ra
& = \int\limits_{\lambda \in \sigma(H_0)} g_{I_0}(\lambda)^2 \, \tilde f(\lambda)   \, \di m_\vf(\lambda) \\
\notag
& \geq \theta  \int\limits_{\lambda \in \sigma(H_0) } g_{I_0}(\lambda)^2 \,  \, \di m_\vf(\lambda) = \theta  \norm{g_{I_0}(H_0) \vf}_0^2  \ .
\end{align}
This  estimate and \eqref{mourre.har1} proves  that $H_0$ fulfills a Mourre estimate over  $I_0$. 
\end{proof}

To conclude this section, we recall that in  \cite{Mas19}  it is proved that the pseudodifferential operator 
\begin{align}
\label{V.har}
V(t) := e^{- \im t K_0} \, (S + S^*) \, e^{\im t K_0}  
\end{align}
is a universal transporter (see Definition \ref{def:transporter}). Using the abstract  Theorem \ref{thm:ab} we  prove its stability  under perturbations of class   $C^\infty(\T, \cA_0)$:
\begin{theorem}
\label{thm:har}
Consider equation \eqref{har.osc} with $V(t)$ defined in \eqref{V.har}.
There exist    $\epsilon_0, \tM >0$   such that $\forall W \in C^\infty(\T,\cA_0)$ with 
$\sup_t \wp^0_\tM(W(t)) \leq \epsilon_0$, 
the operator $V + \epsilon W$ is a transporter. 
 More precisely $\forall r >0$ there exist a solution $\psi(t) \in \cH^r$ of 
$
\im \pa_t \psi =  \big( \frac{- \pa_x^2 + x^2}{2}+  V(t)+ W(t) \big)\psi  $ 
 and constants $C, T >0$ such that
$$
\norm{\psi(t)}_r \geq C \la t \ra^r , \quad \forall t \geq T \ . 
$$
\end{theorem}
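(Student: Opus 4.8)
The plan is to verify Assumptions I, II and III of Section \ref{sec:alg} for the harmonic oscillator $K_0 = \frac12(-\pa_x^2 + x^2)$ and the time-periodic potential $V(t) = e^{-\im t K_0}(S + S^*) e^{\im t K_0}$, and then to invoke Theorem \ref{thm:ab}. Assumptions I and II are nothing but the Weyl calculus on $S^\rho_\hos$ and the Egorov theorem for the harmonic oscillator, both recalled in the discussion preceding this statement. In particular, $S$ and $S^*$ are smooth T\"oplitz operators, hence belong to $\cA_0$ by Lemma \ref{top.pse}, so $S + S^*$ is a selfadjoint element of $\cA_0$; and since the conjugation $\fA \mapsto e^{\im\tau K_0}\fA\, e^{-\im\tau K_0}$ sends $\cA_0$ into $C^\infty(\T,\cA_0)$ (its symbol being $a\circ\phi^\tau_\hos$ as in \eqref{symb}, and the flow of $K_0$ being $2\pi$-periodic), we get $V \in C^\infty(\T, \cA_0)$ with $V(t)$ selfadjoint for every $t$.

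The only point of substance is Assumption III, for which we compute the resonant average $\la V\ra$ of \eqref{res.av}. Here the two conjugations cancel: $e^{\im s K_0}\, V(s)\, e^{-\im s K_0} = S + S^*$ for every $s$, so that $\la V\ra = \frac{1}{2\pi}\int_0^{2\pi}(S+S^*)\,\di s = S + S^*$. (Equivalently, from $K_0\be_n = (n-\tfrac12)\be_n$ and $S\be_n = \be_{n+1}$ one reads off $e^{-\im t K_0} S\, e^{\im t K_0} = e^{-\im t} S$ and $e^{-\im t K_0} S^*\, e^{\im t K_0} = e^{\im t} S^*$, so $V(t) = e^{-\im t} S + e^{\im t} S^*$ and averaging over the period removes the oscillation.)

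Now $\la V\ra = S + S^*$ is of the form treated in Lemma \ref{hw.HO3} and Lemma \ref{lem:har.mourre}: it is the resonant average of the smooth T\"oplitz potential $\cos(t)\cdot 2(S+S^*)$, i.e. the case $k = 1$, $\fV_k = 2 \neq 0$ in those lemmas. Hence Lemma \ref{lem:har.mourre} gives that $\sigma(\la V\ra) \supseteq [-2,2]$ and that $\la V\ra$ satisfies the Mourre estimate \eqref{mourre.ab} over, say, $I_0 = [-1,1]$ with conjugate operator $A \in \cA_1$ of the form \eqref{A.ho2}. (Alternatively: $\frac12(S+S^*)$ satisfies Assumption III by Lemma \ref{lem:har.mourre} with $\fV_1 = 1$, and rescaling the Mourre estimate by the factor $2$ transfers it to $S+S^*$.) This verifies Assumption III.

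With Assumptions I--III in hand, Theorem \ref{thm:ab} produces $\epsilon_0, \tM > 0$ such that for every selfadjoint $W \in C^\infty(\T, \cA_0)$ with $\sup_t \wp^0_\tM(W(t)) \le \epsilon_0$ the operator $V(t) + W(t)$ is a transporter for \eqref{har.osc}, and for each $r > 0$ yields a solution $\psi(t) \in \cH^r$ with constants $C, T > 0$ such that $\norm{\psi(t)}_r \ge C \la t\ra^r$ for $t \ge T$; this is exactly the assertion. I do not expect a genuine obstacle: the normal form, the local energy decay estimate and the stability of the Mourre estimate under pseudodifferential perturbations are all encapsulated in Theorem \ref{thm:ab}, so the proof reduces to the bookkeeping above, the main (and only mildly delicate) step being the identification $\la V\ra = S + S^*$ and its matching with Lemma \ref{lem:har.mourre}.
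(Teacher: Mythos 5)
Your proof is correct and follows essentially the same route as the paper: verify Assumptions I--II via Weyl calculus and the Egorov theorem on $S^\rho_\hos$, compute $\la V\ra = S+S^*$ by noting the conjugations in \eqref{res.av} cancel against those in \eqref{V.har}, identify this with the case $k=1$, $\fV_1=2$ of Lemma \ref{lem:har.mourre} so that Assumption III holds, and then invoke Theorem \ref{thm:ab}. The extra detail you supply (the explicit Fourier identity $V(t)=e^{-\im t}S+e^{\im t}S^*$ and the alternative rescaling argument) is accurate but not needed beyond what the paper records.
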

\begin{proof}
Again we verify Assumption III.  It is clear that  $V(t) \in C^\infty(\T, \cA_0)$ and that  $\la V \ra = S+S^*$, so it has the form \eqref{H0.har} with $k = 1$ and  $\fV_1 = 2$.
Then Lemma \ref{lem:har.mourre} implies that $\la V\ra$ fulfills a Mourre estimate.
\end{proof}

\subsection{Half-wave equation on $\T$}
The half-wave equation on $\T$ is   given by 
 \begin{equation}
 \label{halfwave}
 \im \pa_t  \psi = |D| \psi + V(t,x, D) \psi \ , \qquad x \in \T \ .
 \end{equation}
 Here  $|D| $ is the Fourier multiplier defined by 
$$
  |D| \psi := \sum_{j \in \Z  } |j|\,  \psi_j \, e^{\im j x}  \ , \qquad
  \psi_j := \frac{1}{2\pi} \int_\T \psi(x) e^{-\im j x} \di x \ ,
 $$
 whereas $V(t,x,D)$ is a pseudodifferential operator of order 0.
In this case  $K_0 := |D|+1$, the scale of  Hilbert spaces  defined  as 
 $\cH^r = {\rm Dom }\left(K_0^r\right)$ coincides with standard  Sobolev spaces on the torus $H^r(\T)$, and the base space $(\cH^0, \la \cdot, \cdot \ra)$ is $L^2(\T, \C)$   with its  standard  scalar product.
In this setting we shall use 
pseudodifferential operators with periodic symbols,  belonging to the following class:
\begin{definition}
A  function $ a(x,\xi)$ is a {\em periodic symbol of order} $\rho\in \R$ if $a \in C^\infty(\T_x \times \R_\xi, \C)$ and for any  $\alpha,  \beta \in \N_0 $, 
there exists a constant $C_{\alpha \beta} >0$ such that
\begin{equation}\label{simb-pro}
\abs{ \pa_x^\alpha \, \pa_\xi^\beta \, a(x, \xi) } \leq C_{\alpha \beta } \, \la \xi \ra^{\rho - \beta}  , \quad \forall x \in \T, \, \forall \xi \in \R \,  . 
\end{equation}
We will write $a \in S^\rho_{\per}$. We also put $ S^{- \infty}_\per := \bigcap_{\rho \in \R} S^{\rho}_\per$ the class of smoothing symbols.
\end{definition}

   We endow $S^\rho_\per$  with the family of seminorms
\begin{equation}
\label{semi}
\wp^\rho_j(a) := \sum_{|\alpha| + |\beta| \leq j}
\ \ \sup_{(x, \xi) \in \T \times \R} {\left|\partial_x^\alpha \, \partial_\xi^\beta a(x,\xi)\right| \,  \la \xi \ra^{-\rho+\beta} } \ , \qquad j \in \N_0 \ . 
\end{equation}
Such seminorms turn $S^\rho_\per$ into a Fr\'echet space.
If a symbol $a$ depends on additional parameters (e.g. it is time dependent), we ask that all the seminorms  are uniform w.r.t. such parameters.\\
 To a symbol $a \in S^\rho_\per$ we associate its quantization $a(x, D)$  acting on a $2\pi$-periodic function $u(x) = \sum_{j \in \Z} u_j e^{\im j x}$ as 
\begin{equation}
\label{opW} 
a(x, D)u := {\rm Op} {(a)}[u]  := \sum_{j \in \Z} \, a(x, j) \, 
   u_j \,  e^{\im j x } \,  . 
\end{equation}
%where $\wh a(j, \xi) := \frac{1}{2\pi}\int_\T a(x, \xi) e^{- 	\im j x} \di x$, $j \in \Z$, are the Fourier coefficients of the function $x \mapsto a(x, \xi)$.

\begin{remark}
\label{sc.per}
Given a symbol $a(\xi)$ independent of $x$, then $ {\rm Op} {(a)}$ is the Fourier multiplier operator 
$a(D) u = \sum_{j \in \Z} \, a(j) \, u_j \, e^{\im j x} $.
 If instead the symbol $a(x)$ is independent of $\xi$, then $ {\rm Op}{(a)}$ is the multiplication operator $ {\rm Op} {(a)} u = a(x) u $.
\end{remark}

\begin{definition}
We say that $A \in \cA_\rho$ if $A = {\rm Op} (a)$ with $a \in S^\rho_{\per}$.
\end{definition}

\begin{example}
The operator $|D| \in \cA_1$ with symbol given by 
$\td(\xi) := |\xi| \chi(\xi)$ where $\chi$ is an even, positive smooth cut-off function satisfying
$\chi(\xi) = 0$ for $|\xi| \leq \frac15$, $\chi(\xi) = 1$ for $|\xi| \geq \frac25$ and $\pa_\xi \chi(\xi) >0$ $\, \forall \xi \in (\frac15, \frac25)$.\\
Also the Fourier projectors $\Pi_\pm$ and $\Pi_0$ defined by
\begin{equation}
\label{fou.proj}
\Pi_+u  : = \sum_{j \geq 1} u_j \, e^{\im j x} , \qquad
\Pi_-u  : = \sum_{j \leq - 1} u_j \, e^{\im j x} , \qquad
\Pi_0 u  : =  u_0 
\end{equation}
are pseudodifferential operators. In particular $\Pi_\pm = \Op{\pi_\pm} \in \cA_0$ and $\Pi_0 = \Op{\pi_0} \in \cA_{- \infty}$, where  $\pi_\pm, \pi_0$ are a   smooth partition of unity, $\pi_+(\xi) + \pi_-(\xi) + \pi_0(\xi) = 1$ $\, \forall \xi$,  fulfilling 
\begin{equation}
\label{eta}
\pi_+(\xi) = 
\begin{cases}
1 & \mbox{ if } \xi \geq \frac45 \\
0 & \mbox{ if } \xi \leq \frac35
\end{cases} \ , 
\quad
\pi_-(\xi) = 
\begin{cases}
1 & \mbox{ if } \xi \leq -\frac45 \\
0 & \mbox{ if } \xi \geq -\frac35 
\end{cases}  \ , 
\quad
\pi_0(\xi)  = 
\begin{cases}
1 & \mbox{ if } |\xi| \leq \frac35 \\
0 & \mbox{ if } |\xi|  \geq \frac15 
\end{cases} \ . 
\end{equation}
\end{example}

%We also identify the Hilbert transform $\cH$, acting on the $2 \pi$-periodic functions, defined by
%\begin{equation}\label{Hilbert-transf}
%\sH ( e^{\im j x} ) := - \im \, {\rm sign }(j) e^{\im jx} \,  \quad  \forall j \neq 0 \, ,  \quad \sH (1) := 0\,, 
%\end{equation}
%with the Fourier multiplier $\Opw{ - \im\, {\rm sign }(\xi) \chi(\xi) }$. 
%

In this setting we prove that any multiplication operator,  multiplied by an appropriate time periodic function, becomes a transporter. Here the result.

\begin{theorem}
\label{thm:per}
Let  $ v \in C^\infty(\T, \R)$. Choose $j \in \Z\setminus\{0\}$ such that the  Fourier coefficient $v_j \neq 0$. 
Then the selfadjoint operator
\begin{equation}
\label{V.per}
V(t,x):= \cos(jt) \, v(x)
\end{equation}
 is a transporter. More precisely, $\forall r >  0$  there exist  a
solution $\psi(t) \in \cH^r$ of 
$\im \pa_t \psi = \big( |D| + V(t,x) \big) \psi$ 
 and constants $C, T >0$ such that 
$$
\norm{\psi(t)}_r \geq C \la t \ra^r , \quad \forall t > T .
$$
\end{theorem}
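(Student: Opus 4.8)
The plan is to apply the abstract Theorem~\ref{thm:ab0} to the half-wave equation, checking Assumptions~I--III. Assumption~I is the standard symbolic calculus for the classes $S^\rho_\per$ (see e.g.\ the references on periodic pseudodifferential operators), and Assumption~II holds because $\sigma(K_0)=\sigma(|D|+1)\subseteq\N$ so that \eqref{AssA} is satisfied, while the Egorov property follows since $e^{\im\tau K_0}=e^{\im\tau}e^{\im\tau|D|}$ and conjugation by the (periodic) flow of $|D|$ maps $S^\rho_\per$ to itself. The only substantial point is the verification of Assumption~III, and this is where I would spend the effort.

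First I would compute the resonant average $\la V\ra$ of $V(t,x)=\cos(jt)v(x)$. Writing $v(x)=\sum_{\ell\in\Z}v_\ell e^{\im\ell x}$, the operator $e^{\im sK_0}V(t,x)e^{-\im sK_0}$ acts on the $n$-th Fourier mode; conjugation by $e^{\im s|D|}$ shifts the frequency of each monomial $e^{\im\ell x}$ according to the difference $|n+\ell|-|n|$. For $n$ of large modulus and fixed $\ell$, one has $|n+\ell|-|n|=\sgn(n)\,\ell$, so averaging $\cos(js)e^{\im s(|n+\ell|-|n|)}$ over $s\in[0,2\pi]$ selects $\ell=\pm j$ on each half-line $\{n\ge1\}$ and $\{n\le-1\}$. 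The upshot is that, up to a smoothing operator supported near frequency $0$ (which lies in $\cA_{-\infty}$ and can be absorbed), $\la V\ra$ is unitarily equivalent, via the Fourier transform, to a two-sided discrete Jacobi-type operator: on $\ell^2(\N)$ (the positive frequencies) it acts as $\tfrac12(\bar v_j\,\mathsf{S}_+^{\,j}+v_j\,\mathsf{S}_+^{*j})$ where $\mathsf{S}_+$ is the right shift, and on $\ell^2(-\N)$ (the negative frequencies) as the analogous operator with the roles of $v_j,\bar v_j$ swapped. This is the exact analogue of Lemma~\ref{hw.HO3}, and I would isolate it as a lemma.

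With this structure in hand, Assumption~III~(i) follows from a Weyl-sequence argument identical to the one in Lemma~\ref{lem:har.mourre}~(i): truncated exponentials $\psi^{(n)}=n^{-1/2}\sum_{\ell=1}^n e^{\im\rho\ell}e^{\im\ell x}$ show that $\Re(v_j e^{-\im\rho j})\in\sigma(\la V\ra)$ for every $\rho$, hence $[-|v_j|,|v_j|]\subseteq\sigma(\la V\ra)$, which has positive measure since $v_j\neq0$. For Assumption~III~(ii) I would exhibit a conjugate operator $A\in\cA_1$. Guided by \eqref{A.ho2}, the natural choice is built from $|D|$ and the Fourier-shift operators: something of the form $A=\tfrac1{\im}\big(v_j\,(|D|+c)\,E_j - \bar v_j\,E_j^{*}\,(|D|+c)+\textrm{h.c.}\big)$ acting appropriately on $\Pi_+$ and $\Pi_-$, where $E_j=\Op{e^{\im jx}}$ (or rather the Fourier-shift by $j$ composed with $\Pi_\pm$), chosen so that the commutator identities $[E_j,|D|]=-jE_j$ hold modulo smoothing. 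Then a direct commutator computation, using these identities and $E_jE_j^*=\uno$ modulo a finite-rank projector, should give
\begin{equation}
\notag
\im[\la V\ra,A]=4j\big(|v_j|^2-\la V\ra^2\big)+\textrm{compact},
\end{equation}
and since $\lambda\mapsto 4j(|v_j|^2-\lambda^2)$ is strictly positive on the interior of $[-|v_j|,|v_j|]$, the spectral theorem yields the Mourre estimate \eqref{mourre.ab} on any $I_0$ compactly contained there, with $g_{I_0}$ supported in $[-|v_j|,|v_j|]$.

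The main obstacle I anticipate is bookkeeping the frequency-zero region cleanly: conjugation by $e^{\im s|D|}$ is only asymptotically a frequency shift (the identity $|n+\ell|-|n|=\sgn(n)\ell$ fails for $|n|\le|\ell|$), so the averaged operator is not exactly the clean Jacobi operator above but differs from it by a finite-rank (hence compact, and in fact $\cA_{-\infty}$) correction near $\xi=0$. One must check that this correction is harmless: it does not affect Assumption~III~(i) (positive measure of the spectrum is insensitive to finite-rank perturbations, and the Weyl sequence lives at high frequency anyway) and it only changes $\im[\la V\ra,A]$ by a compact operator, which is exactly what the Mourre framework of Assumption~III~(ii) tolerates. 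Making the shift-operator algebra on $\Pi_\pm$ precise — i.e.\ defining the right analogue of $S^k$ on each half-line and verifying the commutation relations modulo smoothing — is the only genuinely technical step; once it is in place, the proof is a transcription of the harmonic-oscillator argument. Finally, since all the objects involved are in $\cA_0$ (with $A\in\cA_1$), Theorem~\ref{thm:ab0} applies and delivers the stated growth $\norm{\psi(t)}_r\ge C\la t\ra^r$.
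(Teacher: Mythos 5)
Your overall plan---check Assumptions~I--III for $K_0=|D|+1$, $\wt V(t,x)=\cos(jt)v(x)-1$ and invoke Theorem~\ref{thm:ab0}---is exactly what the paper does. The real divergence is the picture in which you verify Assumption~III.

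The paper stays on the symbol side. Using the explicit Egorov formula for $e^{\im s|D|}\,v(x)\,e^{-\im s|D|}$ (Lemma~\ref{prop:egD}) it shows directly that $\la \wt V\ra = \tv(x)-1+R$ with $\tv(x)=\Re(v_je^{\im jx})$ and $R\in\cA_{-1}$ compact (Lemma~\ref{hw.HO2}). Assumption~III~(i) is then immediate from Weyl's theorem: $\sigma_{ess}(\la\wt V\ra)$ is the range of $\tv-1$, i.e.\ $[-|v_j|-1,|v_j|-1]$. (Your explicit Weyl sequence also works, since $\psi^{(n)}\rightharpoonup 0$ makes $R\psi^{(n)}\to 0$ strongly and $\psi^{(n)}$ concentrates in position near $x=-\rho$, but the Weyl-theorem route is shorter.) The conjugate operator is the dilation-type operator $A=\tw(x)\frac{\pa_x}{\im}+\frac{\pa_x}{\im}\tw(x)$ with $\tw(x)=\Im(v_je^{\im jx})$, and the scalar identities $\tv^2+\tw^2=|v_j|^2$, $\tv'=-j\tw$ give $\im[\la\wt V\ra,A]=2j\big(|v_j|^2-(\la\wt V\ra+1)^2\big)+\text{compact}$, from which the Mourre estimate follows via the spectral theorem. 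You instead pass to Fourier coordinates and rebuild the harmonic-oscillator shift-operator machinery; this describes the same operator, but forces you to treat the two half-lines $\Pi_\pm$ separately and to track the finite-rank corrections at $\xi=0$ that the paper absorbs directly into $\cA_{-1}$.

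There is, however, one genuine gap that would derail the Mourre estimate if your sketch is taken literally. The candidate conjugate operator you write is built from $|D|$, i.e.\ from the symbol $|\xi|$, and modulo the shift notation its principal symbol is proportional to $\tw(x)\,|\xi|$. The Poisson bracket with the principal symbol $\tv(x)$ of $\la V\ra$ is $\{\tv,\tw\,|\xi|\}=-\tv'(x)\,\tw(x)\,\sgn(\xi)=j\,\tw(x)^2\,\sgn(\xi)$, which changes sign across $\xi=0$: on the negative-frequency half-line the commutator $\im[\la V\ra,A]$ would come out with the wrong sign, and no strict Mourre estimate over an interval can hold. The fix is precisely what the paper's choice realizes: use the \emph{signed} derivative $D=\pa_x/\im$ (symbol $\xi$) rather than $|D|$ (equivalently, flip the sign of your $A$ on $\Pi_-$). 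With the escape function $a=\tw(x)\xi$ one has $\{\tv,a\}=j\,\tw^2$ uniformly in $\xi$, and the argument closes. Your phrase ``acting appropriately on $\Pi_+$ and $\Pi_-$'' suggests you sense the issue, but modelling on \eqref{A.ho2}---where the label $K_0\sim n$ is inherently nonnegative, so no sign question arises---obscures that the right variable here is the signed momentum, not the positive one.
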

The theorem follows from Theorem \ref{thm:ab0}. 
So first we put ourselves in the setting of the abstract theorem and  
 rewrite \eqref{halfwave} as
\begin{equation}
\label{vtilde.per}
\im \pa_t \psi = K_0 \psi + \wt V(t,x) \psi , \qquad \wt V(t,x):=  \cos(jt) v(x) - 1  \in \cA_0 \ .
\end{equation}
Again we check  Assumptions I-III. Regarding Assumption I, it is the usual pseudodifferential calculus for periodic symbols, see e.g. 
\cite{SarVai}.\\

\noindent{\bf Verification of Assumption II}.
One has   $\sigma(K_0) = \{ n \}_{n \in \N}$.  
To prove Assumption II $(ii)$ we use the identity 
$e^{-\im t  K_0} \fA \,  e^{ \im t K_0}  = e^{-\im t  |D|} \fA \,  e^{ \im t |D|}$ and Egorov theorem for $|D|$,  see e.g. \cite[Theorem 4.3.6]{sogge}.
Actually we need also the following version of Egorov theorem.

\begin{lemma}
\label{prop:egD}
Let  $a \in S^\rho_\per$, $\rho \in \R$. Then
\begin{equation}
\label{egD}
 e^{\im t  |D|} \, \Op{a} \,  e^{ -\im t |D|} = 
  \Op{a(x+ t, \xi) }\Pi_+  +   \Op{a(x- t, \xi) }\Pi_- +  R(t)
\end{equation}
where
$\Pi_\pm$ are defined in \eqref{fou.proj} and 
 $R(t) \in C^\infty(\T, \cA_{\rho-1})$.\\
 If $\Op{a}$ is selfadjoint, so is $ e^{\im t  |D|} \Op{a} \,  e^{ -\im t |D|}, $ $\, \forall t$.
\end{lemma}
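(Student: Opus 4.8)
The plan is to conjugate $\Op{a}$ by the group $e^{\im t|D|}$ using the decomposition $|D| = \Pi_+ D - \Pi_- D + 0\cdot\Pi_0$ (with $D = -\im\pa_x$ the differentiation operator), so that on the range of $\Pi_+$ the flow $e^{\im t |D|}$ acts as the translation $e^{\im t D}$, i.e. as $u(x)\mapsto u(x+t)$, while on the range of $\Pi_-$ it acts as $e^{-\im t D}$, i.e. as $u(x)\mapsto u(x-t)$. First I would write $\uno = \Pi_+ + \Pi_- + \Pi_0$ on both sides of $\Op{a}$, getting nine terms $e^{\im t|D|}\Pi_\sigma \Op{a}\Pi_{\sigma'} e^{-\im t|D|}$ with $\sigma,\sigma'\in\{+,-,0\}$. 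The six terms involving at least one $\Pi_0$ are smoothing in the strong sense (since $\Pi_0\in\cA_{-\infty}$ and the algebra is graded, $\Pi_0\Op{a}$ and $\Op{a}\Pi_0$ are smoothing, and conjugation by $e^{\im t|D|}=e^{\im t(|D|+1)}e^{-\im t}$ preserves $\cA_{-\infty}$ by Assumption II(ii) with uniform bounds in $t$), so they can be absorbed into $R(t)$.

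For the remaining terms I would treat the "diagonal'' blocks $e^{\im t|D|}\Pi_+\Op{a}\Pi_+ e^{-\im t|D|}$ and $e^{\im t|D|}\Pi_-\Op{a}\Pi_- e^{-\im t|D|}$ and show that, modulo $\cA_{\rho-1}$, the "off-diagonal'' blocks $\Pi_\pm\Op{a}\Pi_\mp$ are negligible and the diagonal ones give the translated symbols. The key computation is: on $\mathrm{Ran}\,\Pi_+$ one has $e^{\im t|D|}=e^{\im tD}\Pi_+$ (up to the $\Pi_0$-correction already discarded), so $e^{\im t|D|}\Pi_+\Op{a}\Pi_+ e^{-\im t|D|} = \Pi_+ e^{\im tD}\Op{a}e^{-\im tD}\Pi_+$, and $e^{\im tD}\Op{a}e^{-\im tD}=\Op{a(x+t,\xi)}$ exactly (translation conjugates a quantized symbol to the translated symbol, by \eqref{opW}); then replacing $\Pi_+ \Op{a(x+t,\xi)}\Pi_+$ by $\Op{a(x+t,\xi)}\Pi_+$ costs a commutator $[\Op{a(x+t,\xi)},\Pi_+]\Pi_+$ which, since $\Pi_+=\Op{\pi_+}$ has symbol constant in $\xi$ for $|\xi|$ large and in particular constant on the support where $\pi_+(1-\pi_+)\neq 0$ matters, is of order $\rho-1$ by Assumption I(iv) and can be thrown into $R(t)$; the $t$-dependence of all error estimates is uniform because translation acts isometrically on the seminorms \eqref{semi}. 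The symmetric computation with $-t$ handles the $\Pi_-$ block. Finally the off-diagonal blocks $\Pi_+\Op{a}\Pi_-$: their symbol is supported, in $\xi$, where $\pi_+(\xi)$ and $\pi_-(\xi)$ overlap — but $\pi_+$ vanishes for $\xi\le 3/5$ and $\pi_-$ vanishes for $\xi\ge -3/5$, so the product $\Op{\pi_+}\Op{\pi_-}$ is smoothing, hence so is $\Pi_+\Op{a}\Pi_-$ and its conjugate, again absorbed into $R(t)$.

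For the selfadjointness claim: if $\Op{a}=\Op{a}^*$, then $(e^{\im t|D|}\Op{a}e^{-\im t|D|})^*=e^{\im t|D|}\Op{a}^* e^{-\im t|D|}=e^{\im t|D|}\Op{a}e^{-\im t|D|}$ since $|D|$ is selfadjoint, so the conjugate is selfadjoint for every $t$; this is immediate and needs no symbolic argument. I expect the main obstacle to be bookkeeping the various order-$(\rho-1)$ remainder terms and, in particular, verifying carefully that conjugation by $e^{\im t|D|}$ sends $\cA_m\to\cA_m$ with seminorm bounds uniform in $t\in\T$ (this is exactly Assumption II(ii) applied to $K_0=|D|+1$, together with the periodicity from Remark \ref{K0.flow.p}), and that the split $|D|\rightsquigarrow \pm D$ on $\mathrm{Ran}\,\Pi_\pm$ introduces only smoothing errors near $\xi=0$; once those uniformities are in hand the identity \eqref{egD} follows by collecting terms.
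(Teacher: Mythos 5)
Your proof is correct but takes a genuinely different route from the paper's. The paper's proof quotes the classical Egorov theorem for $|D|$ (from Sogge's book), which directly gives $e^{\im t|D|}\Op{a}e^{-\im t|D|} = \Op{a\circ\phi^t_{\td}} + \wt R(t)$ with $\wt R(t)\in C^\infty(\R,\cA_{\rho-1})$; it then computes the transported symbol $a\circ\phi^t_{\td}$ explicitly, observes $\td'(\xi)=\pm1$ for $\pm\xi\geq 2/5$, and splits using the partition $\pi_++\pi_-+\pi_0=1$ on the symbol level before re-quantizing. You instead bypass Egorov entirely: you split the operator (not the symbol) via $\Pi_++\Pi_-+\Pi_0$ on both sides of $\Op{a}$, exploit that $e^{\im t|D|}\Pi_\pm = e^{\pm\im tD}\Pi_\pm$ holds exactly (since $|D|$ acts as $\pm D$ on the ranges of $\Pi_\pm$ over the integer lattice), and use the exact intertwining $e^{\im tD}\Op{a}e^{-\im tD}=\Op{a(x+t,\xi)}$, which follows from a one-line Fourier computation for the quantization \eqref{opW}. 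This makes your argument more elementary and self-contained: it requires no external Egorov input, only symbolic calculus (Assumptions I(iii)--(iv)) and the exactness of translation on $\T$. The trade-off is more bookkeeping -- nine terms from the double insertion of $\Pi_++\Pi_-+\Pi_0$, and a short argument that $\Pi_+\Op{a}\Pi_-$ (and its reflection) is smoothing. On that last point your justification could be sharpened: $\Pi_+\Pi_-=0$ alone does not immediately give that $\Pi_+\Op{a}\Pi_-$ is smoothing since $\Op{a}$ does not commute with the projectors; the cleanest way is to note that every term of the symbolic asymptotic expansion of $\Pi_+\Op{a}\Pi_-$ contains a product of $\xi$-derivatives of $\pi_+$ and $\pi_-$, whose supports are disjoint, so the expansion vanishes identically and the operator is smoothing. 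Both approaches reach the same conclusion; the paper is shorter modulo Egorov, yours avoids a citation at the cost of a direct computation.
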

\begin{proof}
The classical Egorov theorem for the half-Laplacian $|D|$  says that
$$
 e^{\im t  |D|} \, \Op{a} \,  e^{ -\im t |D|} = \Op{ a\circ \phi_\td^t(x, \xi)} + \wt R(t)
 $$
 where $\phi_\td^t(x, \xi)$ is the time $t$ flow of the classical Hamiltonian $\td(\xi) = |\xi| \chi(\xi)$ (the symbol of $|D|$) and $\wt R(t) \in C^\infty(\R, \cA_{\rho-1})$, see e.g.  \cite[Theorem 4.3.6]{sogge}.
 \\
We  compute more explicitly $a\circ \phi_\td^t(x, \xi)$. The Hamiltonian equations of $\td(\xi)$ and its flow  $\phi^t_\td$ are given by 
$$
\begin{cases}
\dot x = \pa_\xi \td(\xi) = \td'(\xi) \\
\dot \xi = - \pa_x \td(\xi) = 0
\end{cases} \ , 
\qquad \phi^t_\td(x, \xi) = (x + t \td'(\xi), \, \xi) \ .
$$
As $\td'(\xi) = 1$ for $\xi \geq \frac25$ and $\td'(\xi) = -1$ for $\xi \leq - \frac25$, we write
$$
(a\circ \phi_\td^t)(x, \xi) = 
a(x + t, \xi) \, \pi_+(\xi) + 
a(x - t, \xi) \, \pi_-(\xi) + 
a(x  + t \td'(\xi), \xi)\,  \pi_0(\xi)  \ . 
$$
As $\pi_0 \in S^{-\infty}_\per$, the operator $\Op{a(x+t\td'(\xi), \xi) \, \pi_0(\xi)} \in C^\infty(\R, \cA_{- \infty})$. 
Moreover by symbolic calculus  
$$
\Op{a(x\pm t, \xi) \, \pi_\pm(\xi)} =  \Op{a(x\pm t, \xi) }\Pi_\pm + R_\pm(t) , \quad R_\pm(t) \in C^\infty(\R, \cA_{\rho-1}) \ . 
$$
Formula \eqref{egD} follows with
$
R(t):= \wt R(t) + R_+(t) + R_-(t) +\Op{a(x+t\td'(\xi), \xi) \, \pi_0(\xi)} 
$.
We claim that  $R(t)$   is periodic in time. This follows by difference since
both  $e^{\im t  |D|} \, \Op{a} \,  e^{ -\im t |D|}$ and   
$  \Op{a(x\pm t, \xi) }\Pi_\pm$ are periodic in $t$ (recall that the symbol $a(x, \xi)$ is periodic in $x$).\\ 
Finally  as $e^{\pm \im  t |D|}$ are unitary, the claim on the selfadjointness of $e^{ \im t |D|} \, \Op{a} \, e^{-\im t |D|}$ follows.
\end{proof}

\noindent{\bf Verification of Assumption III}.
First we compute $\langle \wt V \rangle$. 

\begin{lemma}
\label{hw.HO2}
The resonant average $\langle \wt V \rangle \in \cA_0$ of  $\wt V$ (defined in \eqref{vtilde.per}) is given by
\begin{equation}
\label{H0.per}
\langle \wt V \rangle = \tv(x) - 1  + R , \qquad \tv(x) := \Re (v_j e^{\im  j x})
\end{equation}
and   $R \in \cA_{-1}$ is selfadjoint.
\end{lemma}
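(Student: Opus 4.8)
The plan is to evaluate $\langle \wt V\rangle$ straight from its definition \eqref{res.av}, using the Egorov expansion of Lemma~\ref{prop:egD} as the only nontrivial input. First I would note that, since $K_0=|D|+1$, the scalar phases $e^{\pm\im s}$ cancel in the conjugation, so $e^{\im sK_0}\wt V(s)e^{-\im sK_0}=e^{\im s|D|}\wt V(s)e^{-\im s|D|}$; moreover the constant part of $\wt V(s,x)=\cos(js)v(x)-1$ (see \eqref{vtilde.per}) is invariant under the conjugation and contributes $-1$ to the average. Hence
\[
\langle \wt V\rangle \;=\; \frac{1}{2\pi}\int_0^{2\pi}\cos(js)\,e^{\im s|D|}\,\Op{v}\,e^{-\im s|D|}\,\di s \;-\;1,
\]
where $\Op{v}$ is multiplication by $v$, i.e.\ the quantization of the symbol $a(x,\xi):=v(x)\in S^0_\per$.

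Next I would insert Lemma~\ref{prop:egD}, which for this $a$ gives $e^{\im s|D|}\Op{v}e^{-\im s|D|}=\Op{v(x+s)}\Pi_++\Op{v(x-s)}\Pi_-+R(s)$ with $R(s)\in C^\infty(\T,\cA_{-1})$. Since $\Pi_\pm$ are $s$-independent, the time average acts only on the multiplication factors; expanding $v=\sum_k v_k e^{\im kx}$ and using $j\neq0$ together with $v_{-j}=\overline{v_j}$ (because $v$ is real) yields
\[
\frac{1}{2\pi}\int_0^{2\pi}\cos(js)\,v(x\pm s)\,\di s=\tfrac12\big(v_j e^{\im jx}+v_{-j}e^{-\im jx}\big)=\Re(v_j e^{\im jx})=\tv(x).
\]
Therefore the average of the principal part is $\tv(x)(\Pi_++\Pi_-)=\tv(x)(\uno-\Pi_0)$, and collecting terms produces exactly \eqref{H0.per} with
\[
R:=-\,\tv(x)\,\Pi_0+\frac{1}{2\pi}\int_0^{2\pi}\cos(js)\,R(s)\,\di s.
\]

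Finally I would check the asserted properties of $R$. For the order: $\tv(x)\in\cA_0$ while $\Pi_0=\Op{\pi_0}$ with $\pi_0\in S^{-\infty}_\per$, so $\Pi_0\in\bigcap_\rho\cA_\rho$ and hence $\tv(x)\Pi_0\in\cA_{-1}$ by the graded algebra property (Assumption~I~(iii)); the integral term lies in $\cA_{-1}$ because $s\mapsto R(s)$ is a smooth, hence continuous, $\cA_{-1}$-valued map on the compact interval $[0,2\pi]$, so its integral is a well-defined element of the Fréchet space $\cA_{-1}$ with seminorms bounded by $\sup_s\wp^{-1}_\bullet(R(s))$ — precisely the Bochner-integral argument already used for $\wh\fV$ in Lemma~\ref{prop:av} (cf.\ \eqref{B-est2}). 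For selfadjointness: $\langle\wt V\rangle$ is selfadjoint by Lemma~\ref{prop:av}(ii) (each $\wt V(t)$ is multiplication by a real function minus $1$), and $\tv(x)-1$ is selfadjoint since $\tv$ is real, so $R=\langle\wt V\rangle-\tv(x)+1$ is selfadjoint; the membership $\langle\wt V\rangle\in\cA_0$ also follows from Lemma~\ref{prop:av}(ii), or from the decomposition since $\cA_{-1}\subset\cA_0$. I do not expect a genuine obstacle here: the only points requiring care are the bookkeeping one of keeping the $s$-average off the $\Pi_\pm$ factors, and the justification that the time-average of the $\cA_{-1}$-valued Egorov remainder stays in $\cA_{-1}$, which is essentially a repetition of an argument in the proof of Lemma~\ref{prop:av}.
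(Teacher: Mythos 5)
Your proof is correct and follows essentially the same route as the paper: conjugate by $e^{\im s|D|}$ (the scalar phase from $K_0=|D|+1$ cancels), apply Lemma \ref{prop:egD}, compute the time average of $v(x\pm s)$ via the Fourier expansion of $v$ and $v_{-j}=\overline{v_j}$, and collect $-\tv(x)\Pi_0$ together with the averaged Egorov remainder into $R\in\cA_{-1}$, with selfadjointness of $R$ obtained by difference. The only (harmless) difference is that you spell out the Bochner-integral argument for the averaged remainder staying in $\cA_{-1}$, which the paper leaves implicit.
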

\begin{proof}
First remark that, as $e^{\im t K_0} = e^{\im t |D|} e^{\im t}$,
\begin{align}
\label{H0.per1}
\langle \wt V \rangle
& =  \frac{1}{2\pi} \int_0^{2\pi} e^{\im s |D|} \, \wt V(s) \, e^{- \im s |D|} \di s  
= 
\frac{1}{2\pi} \int_0^{2\pi} \cos(js) \, e^{\im s |D|} \, v(x) \, e^{- \im s |D|} \, \di s -  1  \ . 
\end{align}
We  compute $e^{\im s |D|} \, v(x) \, e^{- \im s |D|}$ with the aid of  Lemma  \ref{prop:egD}, getting 
\begin{equation}
\label{eg.per}
e^{\im s |D|} \, v(x) \, e^{- \im s |D|} = 
v(x+ s) \, \Pi_+  + 
v(x- s) \, \Pi_-  +  \wt R(s) , 
\end{equation}
where $\wt R(s) \in C^\infty(\T, \cA_{-1})$. Then, recalling that $v_j = \bar{v_{-j}}$ being $v(x)$ real valued,
\begin{align*}
 \frac{1}{2\pi}\int_0^{2\pi}
\cos(j s) \, e^{\im s |D|} \, v(x) \, e^{- \im s |D|} \, \di s  
&\stackrel{\eqref{eg.per}}{ =} \sum_{\sigma = \pm}
\frac{1}{2\pi}\int_0^{2\pi} \cos(js) \, v(x\sigma   s) \,\di s  \, \Pi_\sigma  +
\frac{1}{2\pi}\int_0^{2\pi} \cos(js) \wt R(s)  \di s \\
& = \Re \left( v_j e^{\im j x}\right)  \, ( \Pi_+ +   \Pi_- )  + \frac{1}{2\pi}\int_0^{2\pi} \cos(js) \wt R(s)  \di s\\
& =   \Re \left( v_j e^{\im j x}\right) + R
\end{align*}
where
$R := \frac{1}{2\pi}\int_0^{2\pi} \cos(js) \wt R(s)  \di s - \Re \left( v_j e^{\im j x}\right)\Pi_0  \in \cA_{-1}$.
Together with \eqref{H0.per1}, this proves  \eqref{H0.per}. 
Finally  $R$ is selfadjoint by difference, since both $\langle \wt V \rangle$ and $\tv(x)-1$ are selfadjoint operators.
\end{proof}

 Define the selfadjoint operator 
\begin{align}
\label{A.per}
A :=   \tw(x) \, \frac{\pa_x}{\im} + \frac{\pa_x}{\im} \, \tw(x)  \ , 
\qquad
\tw(x) := \Im( v_j e^{\im j x} )  \ 
\end{align}
belonging  to $\cA_1$.
The next lemma verifies Assumption III.
\begin{lemma}
Assume that $v_j \neq 0$.   The following holds true:
\begin{itemize}
\item[(i)] The operator $H_0:= \langle  \wt V \rangle$ has spectrum $\sigma(H_0) \supseteq [-|v_j| - 1, \, |v_j| - 1] =: I $.
\item[(ii)] 
For any interval $I_0 \subset  I $, any $g_{I_0} \in C^\infty_c(\R, \R_{\geq 0})$ with $g_{I_0} \equiv 1$ over $I_0$
and ${\rm supp }\, g_{I_0} \subset   I $, there exist $\theta >0$ and a compact operator $\fK$  such that 
$$
g_{I_0}(H_0) \, \im [H_0, A] \, g_{I_0}(H_0) \geq \theta \,  g_{I_0}(H_0)^2  + \fK \ .
$$
Here  $A$ is defined in \eqref{A.per}.
\end{itemize}
\end{lemma}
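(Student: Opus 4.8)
The plan is to run the argument of Lemma~\ref{lem:har.mourre}, viewing $H_0 = \langle \wt V \rangle$ as a compact perturbation of the operator $M_{\tv}$ of multiplication by $\tv(x) = \Re(v_j e^{\im j x})$: by \eqref{H0.per}, $H_0 = M_{\tv} - 1 + R$ with $R \in \cA_{-1}$, hence $R$ compact by Remark~\ref{rem:compact}. Two elementary identities coming from $v_j e^{\im j x} = \tv(x) + \im\,\tw(x)$ do all the work, namely $\tv(x)^2 + \tw(x)^2 \equiv |v_j|^2$ and $\tv'(x) = -j\,\tw(x)$. Since $v$ is real, $v_j \neq 0$ forces $v_{-j} \neq 0$, and replacing $j$ by $-j$ leaves $\tv$ (hence $H_0$ and $I$) unchanged while turning $\tw$, and thus $A$, into its opposite; so I may and will assume $j>0$ (for $j<0$ one simply takes $-A$ in \eqref{A.per}). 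With this reduction, part (i) is immediate: $M_{\tv}$ is multiplication by the nonconstant continuous function $\tv(x) = |v_j|\cos(jx+\arg v_j)$ on $L^2(\T)$, so $\sigma(M_{\tv})$ is its range $[-|v_j|,|v_j|]$, an interval with no isolated points; thus $\sigma_{ess}(M_{\tv})=\sigma(M_{\tv})$ and $\sigma_{ess}(M_{\tv}-1)=I$, and Weyl's theorem on the invariance of the essential spectrum under the compact perturbation $R$ gives $\sigma_{ess}(H_0)=I$, whence $\sigma(H_0)\supseteq I$. (Alternatively, exactly as in Lemma~\ref{lem:har.mourre}(i), one may produce, for every $\mu$ in the interior of $I$, an explicit Weyl sequence for $H_0$ by concentrating bump functions near a point $x_0$ with $\tv(x_0)-1=\mu$.)

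For part (ii) I compute $\im[H_0,A]$ modulo compacts. The constant $-1$ drops out, and $[R,A]\in\cA_{-1}$ is compact, so $\im[H_0,A]=\im[M_{\tv},A]+\fK_1$ with $\fK_1$ compact. From $[M_{\tv},\pa_x/\im]=\im\,M_{\tv'}$ and the definition \eqref{A.per} of $A$ one gets $[M_{\tv},A]=2\im\,M_{\tw\tv'}$, hence, using $\tv'=-j\,\tw$ and $j>0$,
$$
\im[M_{\tv},A] = -2\,M_{\tw\tv'} = 2j\,\tw(x)^2 \ \geq\ 0 \ .
$$
Next I eliminate $\tw$: since $\tw^2 = |v_j|^2 - \tv^2$ and $M_{\tv}=H_0+1-R$, expanding $M_{\tv}^2=(H_0+1)^2+\fK_2$ — every term containing the compact $R$ being compact, by Remark~\ref{rem:compact} and the graded-algebra Assumption~I — gives
$$
\im[H_0,A] = \tilde f(H_0) + \fK \ , \qquad \tilde f(\lambda) := 2j\big(|v_j|^2 - (\lambda+1)^2\big) \ ,
$$
with $\fK$ compact; note $\tilde f\geq 0$ on $I$ and $\tilde f>0$ in its interior. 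Sandwiching with $g_{I_0}(H_0)$ and invoking the spectral theorem,
$$
g_{I_0}(H_0)\,\im[H_0,A]\,g_{I_0}(H_0) = g_{I_0}(H_0)\,\tilde f(H_0)\,g_{I_0}(H_0) + g_{I_0}(H_0)\,\fK\,g_{I_0}(H_0) \ \geq\ \theta\, g_{I_0}(H_0)^2 + \fK' \ ,
$$
where $\fK' := g_{I_0}(H_0)\,\fK\,g_{I_0}(H_0)$ is compact and $\theta := \inf\{\tilde f(\lambda):\lambda\in{\rm supp}\,g_{I_0}\}>0$ (the support of $g_{I_0}$ being a compact subset of $I$ that, for a suitable choice of $I_0$ and $g_{I_0}$, avoids the two endpoints $\pm|v_j|-1$ of $I$, where $\tilde f$ vanishes). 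This is precisely the claimed Mourre estimate.

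The computations are routine and parallel to those of Lemma~\ref{lem:har.mourre}; the exact commutator $[M_{\tv},A]=2\im\,M_{\tw\tv'}$ and the two trigonometric identities are the entire content. The only point requiring a little care is the bookkeeping of the compact errors — that $[R,A]$, the cross terms $R(H_0+1)$, $(H_0+1)R$, $R^2$, and finally $g_{I_0}(H_0)\fK g_{I_0}(H_0)$ are all compact — which follows at once from $R\in\cA_{-1}$, Remark~\ref{rem:compact} and the graded-algebra structure, exactly as in Step~3 of the proof of Lemma~\ref{lem:H.M2}. I expect this bookkeeping, together with the harmless sign reduction to $j>0$, to be the main (and minor) obstacle.
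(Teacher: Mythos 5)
Your proof is correct and follows essentially the same route as the paper's: part (i) via Weyl's theorem for the compact perturbation $R$ of the multiplication operator by $\tv(x)-1$; part (ii) by computing $\im[M_{\tv},A]=2j\,\tw^2$, substituting $\tw^2=|v_j|^2-\tv^2$, replacing $\tv$ by $H_0+1-R$ modulo compacts to obtain $\im[H_0,A]=\tilde f(H_0)+\fK$ with $\tilde f(\lambda)=2j(|v_j|^2-(\lambda+1)^2)$, and then invoking the spectral theorem after sandwiching with $g_{I_0}(H_0)$. The one place where you go beyond the paper's proof is the reduction to $j>0$: the paper writes the same chain of identities but does not comment on the sign, so that for $j<0$ the function $\tilde f$ is negative on the interior of $I$ and the stated inequality with the operator $A$ of \eqref{A.per} (and a positive $\theta$) would actually fail; your observation that $j\leftrightarrow -j$ leaves $H_0$ invariant while flipping $A$ cleanly resolves this and is a genuine (if minor) clarification. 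Your remark about choosing $\mathrm{supp}\,g_{I_0}$ away from the endpoints of $I$, where $\tilde f$ vanishes, is likewise implicit in the paper and worth making explicit. The rest — the bookkeeping of compact errors using $R\in\cA_{-1}$, Remark~\ref{rem:compact} and the graded-algebra structure — matches the paper verbatim.
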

\begin{proof}
During the proof we shall use that any operator in $\cA_{-1}$ is compact.   Moreover we shall simply denote  any compact operator
by $\fK$, which can change from line to line.\\
$(i)$ By Lemma \ref{hw.HO2},  $H_0$ is a compact perturbation of the multiplication operator by $\tv(x) -1$, whose spectrum coincides with $ I$.   
Then by Weyl's theorem 
$$\sigma(H_0) \supseteq \sigma_{ess}(H_0) = \sigma_{ess}(\tv(x) -1) =  I  \ . $$
$(ii)$ 
First notice that, as  $\tv(x) = \Re(v_j e^{\im j x})$ and $\tw(x) = \Im(v_j e^{\im j x})$, one has the identities 
\begin{equation}
\label{vw}
\tv(x)^2 + \tw(x)^2 = |v_j|^2 , \qquad \tv'(x) = - j \, \tw(x) \ . 
\end{equation}
Next we compute 
\begin{align*}
\im [H_0, A] 
& = \im [\tv(x) - 1 + R, A] 
= - 2\tw(x) \, \tv'(x) + \im [R, A]  \\
& \stackrel{\eqref{vw}}{=} 2j  \big(|v_j|^2  - \tv(x)^2\big)  +  \fK  = 2j \big(|v_j|^2  - (H_0 + 1- R)^2\big)  +  \fK  \\
& = 2j \big(|v_j|^2  - (H_0 +1)^2 \big)  + \fK  \ . 
\end{align*}
Putting $f(\lambda):=  2j \big( |v_j|^2 -( \lambda+1)^2 \big)  $, we get  $\forall \vf \in \cH$
\begin{align}
\label{M.per}
\la g_{I_0}(H_0) \, \im [H_0, A]   \, g_{I_0}(H_0) \vf, \vf \ra 
 =  \la g_{I_0}(H_0) \, f(H_0) \,  g_{I_0}(H_0) \vf, \vf \ra  + 
\la \fK \vf, \vf \ra .
\end{align}
Now we notice that  $f(\lambda) $ is positive in the interior of $ I$; 
so we put
$$
\theta :=  \inf\left\lbrace f(\lambda) \colon \ \ \lambda \in {\rm supp }\, g_{I_0} \right\rbrace > 0 \ . 
$$
With this information we apply the spectral theorem, getting, as in the previous section, 
\begin{align}
\la g_{I_0}(H_0) \, f(H_0) \,  g_{I_0}(H_0) \vf, \vf \ra
& \geq \theta  \int\limits_{\lambda \in  I } g_{I_0}(\lambda)^2 \,  \, \di m_\vf(\lambda) = \theta  \norm{g_{I_0}(H_0) \vf}_0^2   \ . 
\end{align}
This together with  \eqref{M.per}  establishes the  Mourre estimate over $I_0$. 
\end{proof}

	\appendix

	\section{Flows of pseudodifferential operators}
	\label{app:flow}
In this appendix we collect some known results about the flow generated by pseudodifferential operators belonging to the  algebra  $\cA$. 
The setting is the same as \cite{BGMR2} and we refer to that paper for the proofs. 
The first result describes how a Schr\"odinger equation is changed under a change of variables induced by the flow of a pseudodifferential operator, see Lemma 3.1 of \cite{BGMR2}:
\begin{lemma}
\label{T.1}
Let $H(t)$ be a time dependent selfadjoint operator, and $X(t)$ be a selfadjoint family of operators.  Assume that
$\psi(t)=e^{-\im X(t)}\vf(t)$ then
\begin{equation}
\label{1}
\im\pa_t \psi=H(t) \psi\ \quad \iff \quad \im\pa_t \vf  =  H^+(t) \vf 
\end{equation}  
where 
\begin{align}
\label{4.1.1}
 H^+(t):= e^{\im X(t)} \, H(t)\, e^{-\im X(t)}
-\int_0^1 e^{\im s X(t) } \,(\pa_t X(t))  \, e^{-\im
  s X(t) }  \ \di s \ .
\end{align}
\end{lemma}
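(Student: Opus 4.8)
The plan is to prove the forward implication in \eqref{1} by a direct differentiation, and then to observe that the backward implication follows by symmetry, since $e^{\pm\im X(t)}$ are mutually inverse bounded operators on each $\cH^r$. The only analytic input needed is the standard Duhamel-type formula for the $t$-derivative of the propagator generated by a (smooth, here bounded) selfadjoint family: for $X(t)$ as in the hypotheses one has
$$
\pa_t e^{-\im X(t)} = -\im \int_0^1 e^{-\im s X(t)}\,(\pa_t X(t))\,e^{-\im(1-s)X(t)}\,\di s ,
$$
valid as an identity of operators bounded on every $\cH^r$ (with $t$-derivatives likewise bounded, since $X(t)$ and $\pa_t X(t)$ are). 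I would simply quote this as standard, referring to \cite{BGMR2}; it is obtained, e.g., by differentiating $s\mapsto e^{-\im s X(t+h)}e^{\im s X(t)}$ in $s$ and integrating, or by the usual trotterization argument.

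Next I would substitute $\psi(t)=e^{-\im X(t)}\vf(t)$ into $\im\pa_t\psi=H(t)\psi$ and apply the product rule:
$$
\im\,(\pa_t e^{-\im X(t)})\vf + \im\,e^{-\im X(t)}\pa_t\vf = H(t)\,e^{-\im X(t)}\vf .
$$
Applying $e^{\im X(t)}$ from the left and rearranging gives
$$
\im\pa_t\vf = e^{\im X(t)}H(t)e^{-\im X(t)}\vf \;-\; \im\,e^{\im X(t)}\big(\pa_t e^{-\im X(t)}\big)\vf .
$$
Then I would plug in the Duhamel formula, use $(-\im)(-\im)=-1$ and the identity $e^{\im X(t)}e^{-\im s X(t)}=e^{\im(1-s)X(t)}$ (valid because $X(t)$ commutes with itself), to obtain
$$
-\im\,e^{\im X(t)}\big(\pa_t e^{-\im X(t)}\big) = -\int_0^1 e^{\im(1-s)X(t)}\,(\pa_t X(t))\,e^{-\im(1-s)X(t)}\,\di s ,
$$
and finally the change of variable $s\mapsto 1-s$ turns the right-hand side into $-\int_0^1 e^{\im s X(t)}(\pa_t X(t))e^{-\im s X(t)}\,\di s$, which is exactly the integral correction in the definition \eqref{4.1.1} of $H^+(t)$. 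This yields $\im\pa_t\vf=H^+(t)\vf$.

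For the converse, since $e^{\im X(t)}$ and $e^{-\im X(t)}$ are bounded and mutually inverse on each $\cH^r$, one sets $\psi(t):=e^{-\im X(t)}\vf(t)$ and runs the same computation in reverse, recovering $\im\pa_t\psi=H(t)\psi$; hence the equivalence. I do not expect any genuine obstacle here: the one point requiring mild care is that the Duhamel formula and the manipulations above must hold on the whole scale $\{\cH^r\}_{r\in\R}$ and not merely on $\cH^0$, but this is automatic because in all cases of interest $X(t)$ is a pseudodifferential operator of order $\leq 0$, bounded on every $\cH^r$ with all $t$-derivatives bounded; the remainder of the argument is purely the algebraic simplification of the correction term carried out above.
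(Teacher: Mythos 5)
Your proof is correct and is the standard direct computation one expects: product rule applied to $\psi=e^{-\im X(t)}\vf$, followed by the Duhamel formula for $\pa_t e^{-\im X(t)}$ and the change of variables $s\mapsto 1-s$, with the backward implication following by inverting the bounded conjugation. The paper does not reprove the lemma but cites Lemma~3.1 of \cite{BGMR2}, whose proof proceeds in exactly this way, so your argument matches the intended one.

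One small point worth making explicit, since it is the only place where the algebra could go wrong: the identity $e^{\im X(t)}e^{-\im s X(t)}=e^{\im(1-s)X(t)}$ uses that both exponentials are functions of the \emph{same} selfadjoint $X(t)$ (so they commute), and you correctly rely on this; the $t$-derivative, by contrast, does \emph{not} commute in general, which is why the integral over $s$ in the Duhamel formula cannot be collapsed to $\pa_t X(t)$ and must be carried through as a genuine correction term. Your writeup makes this distinction correctly.

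Regarding the domains: you note that the manipulations must be justified on the whole scale $\{\cH^r\}$. For the applications in this paper, each $X_j$ lies in $\cA_{1-j}\subseteq\cA_0$ and $t\mapsto X_j(t)$ is smooth into $\cA_0$, so both $e^{\pm\im sX(t)}$ and $\pa_t X(t)$ are bounded on every $\cH^r$ uniformly in $s\in[0,1]$ and $t\in\T$ (this is exactly the content of Lemma~\ref{MR}), and your Duhamel identity holds in $\cL(\cH^r)$ for every $r$. So the regularity concern you flag is indeed resolved as you say, and no further work is needed.
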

The next property we shall need is the Lie expansion of $e^{\im
  X} \, A\, e^{-\im X}$ in operators of decreasing order, see  Lemma 3.2 of \cite{BGMR2}:
\begin{lemma}
\label{comX}
Let $X\in\cA_{\rho}$ with $\rho< 1$ be a symmetric operator. Let $A\in\cA_{m}$ with
$m \in \R$. Then $e^{\im
\tau   X} \, A\, e^{-\im \tau X}$ is selfadjoint and for any $M\geq 1$ we have\footnote{in \cite{BGMR2} we have defined $ {\rm ad}_X(A) = \im [X, A]$ rather than  \eqref{adjoint}; so we formulate the next result with the current notation }
\begin{equation}
\label{expansion}
e^{\im \tau X}\, A \,  e^{-\im\tau X } = \sum_{\ell=0}^{M}\frac{\tau^{\ell}}{\im^\ell \, \ell!}{\rm ad}_X^\ell(A) + R_M(\tau, X,A) \ , \qquad \forall \tau \in \R \ , 
\end{equation}
 where  $R_M(\tau, X,A)\in\cA_{m -(M+1)(1-\rho)}$.  
\\
In particular ${\rm ad}_X^{\ell}(A)\in\cA_{m-\ell(1-\rho)}$ and  
$e^{\im \tau X}\, A \, e^{-\im \tau X}\in \cA_{m}$, $\forall \tau
\in \R$.
\end{lemma}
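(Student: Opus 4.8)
The plan is to realize $F(\tau):=e^{\im\tau X}\,A\,e^{-\im\tau X}$ as the solution of the Heisenberg equation $\pa_\tau F(\tau)=\im[X,F(\tau)]$ and Taylor-expand it in $\tau$. As a preliminary I would observe that $X$, being symmetric and of order $\rho<1$, is $K_0$-bounded with arbitrarily small relative bound (by interpolation, $\norm{X\psi}_0\lesssim\norm{\psi}_{1-\rho}\le\varepsilon\norm{\psi}_1+C_\varepsilon\norm{\psi}_0$), hence generates a unitary group $e^{\im\tau X}$; a Gronwall estimate for $\pa_\tau\norm{e^{-\im\tau X}\psi}_s^2=-\im\,\la[K_0^{2s},X]e^{-\im\tau X}\psi,e^{-\im\tau X}\psi\ra$, using that $[K_0^{2s},X]\in\cA_{2s+\rho-1}$ has order $\le 2s$ (here $\rho\le1$ enters), then shows $e^{\pm\im\tau X}$ is bounded on every $\cH^s$, uniformly for $\tau$ in compact sets. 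In particular $F(\tau)\in\bigcap_s\cL(\cH^s,\cH^{s-m})$, and if $A=A^*$ then $F(\tau)$ is self-adjoint because $e^{\pm\im\tau X}$ is unitary.

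From $\pa_\tau F(\tau)=\im[X,F(\tau)]=-\im\,{\rm ad}_X(F(\tau))$ (recall ${\rm ad}_X(B)=[B,X]$) one gets $F^{(\ell)}(0)=(-\im)^\ell{\rm ad}_X^\ell(A)=\im^{-\ell}{\rm ad}_X^\ell(A)$, and, since $X$ commutes with $e^{\im sX}$, the identity ${\rm ad}_X^k(F(s))=e^{\im sX}{\rm ad}_X^k(A)e^{-\im sX}$; hence $F^{(M+1)}(s)=\im^{-(M+1)}e^{\im sX}{\rm ad}_X^{M+1}(A)e^{-\im sX}$. Taylor's formula with integral remainder at order $M+1$ then gives exactly
\begin{align*}
F(\tau)&=\sum_{\ell=0}^{M}\frac{\tau^\ell}{\im^\ell\,\ell!}\,{\rm ad}_X^\ell(A)+R_M(\tau,X,A),\\
R_M(\tau,X,A)&=\frac{1}{\im^{M+1}\,M!}\int_0^\tau(\tau-s)^M\,e^{\im sX}\,{\rm ad}_X^{M+1}(A)\,e^{-\im sX}\,\di s.
\end{align*}
By Assumption I(iv) applied $\ell$ times, ${\rm ad}_X^\ell(A)\in\cA_{m+\ell(\rho-1)}=\cA_{m-\ell(1-\rho)}$ with the seminorm control of \eqref{est.3}, so the finite sum lies in $\cA_m$.

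It remains to locate $R_M$ in the scale, and I would do this in two passes. First, fix $N>0$; since $1-\rho>0$ (here the strict inequality $\rho<1$ is used), pick $M$ so large that $m-(M+1)(1-\rho)\le-N$. Then ${\rm ad}_X^{M+1}(A)\in\cA_{-N}$ maps $\cH^\kappa\to\cH^{\kappa+N}$, and with the uniform boundedness of $e^{\pm\im sX}$ on the scale the integral defining $R_M(\tau,X,A)$ is bounded $\cH^\kappa\to\cH^{\kappa+N}$, i.e.\ $N$-smoothing. Thus for every $N$ we have written $F(\tau)=(\text{element of }\cA_m)+(N\text{-smoothing})$, whence $F(\tau)\in\cA_m$ for all $\tau$ by Assumption I(v). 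Second, with $F(s)\in\cA_m$ established (and, by the same argument at an arbitrary order $m'$, conjugation by $e^{\im sX}$ mapping $\cA_{m'}$ into $\cA_{m'}$), reread the remainder formula for any $M\ge1$: the integrand $s\mapsto e^{\im sX}{\rm ad}_X^{M+1}(A)e^{-\im sX}$ is now a continuous $\cA_{m-(M+1)(1-\rho)}$-valued map, so its Bochner integral lies in the (complete) Fr\'echet space $\cA_{m-(M+1)(1-\rho)}$ — this is the claim $R_M(\tau,X,A)\in\cA_{m-(M+1)(1-\rho)}$. Finally $e^{\im\tau X}Ae^{-\im\tau X}=(\text{sum in }\cA_m)+R_M\in\cA_m$ for any single $M\ge1$, giving the last assertion.

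The main obstacle is the mild circularity exploited above: the sharp statements ``$R_M\in\cA_{m-(M+1)(1-\rho)}$'' and, equivalently, ``$e^{\im\tau X}(\cdot)e^{-\im\tau X}$ preserves each $\cA_{m'}$'' cannot be read off the integral formula directly, since a priori one controls the conjugated operator only as a bounded map on the Hilbert scale. The way around it is Assumption I(v): one first lands $F(\tau)$ in $\cA_m$ using a very-negative-order remainder that need only be estimated as a \emph{smoothing} operator, and then bootstraps $R_M$ into the sharp symbol class. The remaining points — the exact topology in which $\tau\mapsto F(\tau)$ is differentiable when invoking Taylor's formula, and the continuity of the integrand needed for the Bochner integral to land in $\cA_{m'}$ — are routine once the scale-boundedness of $e^{\pm\im\tau X}$ and the continuity estimate \eqref{est.3} are in hand.
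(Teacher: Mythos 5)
Your overall strategy --- cast $F(\tau):=e^{\im\tau X}Ae^{-\im\tau X}$ as a Heisenberg flow, Taylor-expand at $\tau=0$, show the integral remainder is $N$-smoothing for $M$ large, and invoke Assumption I(v) --- is the right one and matches the intended argument. The Gronwall estimate on $\norm{e^{-\im\tau X}\psi}_s^2$ for scale-boundedness of $e^{\pm\im\tau X}$, the identification of the Taylor coefficients via $F'(\tau)=-\im\,{\rm ad}_X(F(\tau))$, and the first pass proving $F(\tau)\in\cA_m$ are all sound. (Two minor slips in the preliminaries: $X\in\cA_\rho$ gives $\norm{X\psi}_0\lesssim\norm{\psi}_\rho$, not $\norm{\psi}_{1-\rho}$, though the interpolation still closes because $\rho<1$; and a small relative bound yields self-adjointness of $K_0+X$ by Kato--Rellich, not of $X$ itself --- for essential self-adjointness of the symmetric $X$ on $\cH^{+\infty}$ one should instead invoke, e.g., Nelson's commutator theorem using that $X$ and $[X,K_0]\in\cA_{\rho}\subset\cA_1$ are $K_0$-bounded.)

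The genuine gap is in your second pass. To place $R_M(\tau)$ in $\cA_{m-(M+1)(1-\rho)}$ you declare the integrand $s\mapsto e^{\im sX}\,{\rm ad}_X^{M+1}(A)\,e^{-\im sX}$ a \emph{continuous} $\cA_{m-(M+1)(1-\rho)}$-valued map and take a Bochner integral. But the first pass only gives pointwise \emph{membership} $e^{\im sX}Be^{-\im sX}\in\cA_{m'}$ via the qualitative closedness axiom I(v); it gives no quantitative control on the Fr\'echet seminorms $\wp^{m'}_j$ of the conjugated operator as $s$ varies, and the ingredients you cite as making the continuity ``routine'' (scale-boundedness of $e^{\pm\im sX}$ and \eqref{est.3}) control only the weaker operator-norm seminorms $\norm{\cdot}_{m,s}$, not the $\wp^{m'}_j$. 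So the Fr\'echet-valued Bochner integral is not justified as written. The fix is to apply I(v) to $R_M(\tau)$ directly rather than to its integral representation: for any $M'>M$,
\begin{equation}
R_M(\tau)=\sum_{\ell=M+1}^{M'}\frac{\tau^\ell}{\im^\ell\ell!}\,{\rm ad}^\ell_X(A)+R_{M'}(\tau) \ ,
\end{equation}
the finite sum lies in $\cA_{m-(M+1)(1-\rho)}$ (each $\ell$-th term is in $\cA_{m-\ell(1-\rho)}\subseteq\cA_{m-(M+1)(1-\rho)}$ since $\ell\geq M+1$ and $\rho<1$), and by your first-pass estimate $R_{M'}(\tau)$ is $N$-smoothing once $M'$ is large enough; Assumption I(v) then yields $R_M(\tau)\in\cA_{m-(M+1)(1-\rho)}$, with no Bochner integral needed.
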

The last result concerns  boundedness properties of the operator $e^{- \im \tau X}$, see  Lemma 3.3 of \cite{BGMR2}:
\begin{lemma}
\label{MR}
 Assume that $X(t)$ is a family of selfadjoint   operators in $\cA_1$ s.t. 
 \begin{equation}
 \label{X(t).est}
 \sup_{t \in \R} \wp^1_j(X(t)) < \infty \ , \quad 
\forall j \geq 1 \ . 
 \end{equation}
 Then $e^{- \im \tau X(t)}$ extends to an operator in $\cL(\cH^r)$   $\, \forall r \in \R$, and moreover there exist $c_r, C_r >0$ s.t.
 \begin{equation}
 \label{X(t).est0}
 c_r \norm{\psi}_r \leq \norm{e^{-\im \tau X(t)} \psi}_r \leq 
 C_r \norm{\psi}_r \ , \qquad \forall t \in \R \ , \quad \forall \tau \in [0,1]  \ .
 \end{equation}
\end{lemma}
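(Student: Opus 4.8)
The plan is to reduce the statement to an a priori energy estimate on the dense subspace $\cH^{+\infty}$, and then to extend by density. Throughout, $t$ is a passive parameter, so I fix it and write $U_\tau := e^{-\im \tau X(t)}$ for the strongly continuous unitary group generated on $\cH^0$ by the selfadjoint operator $X(t)$ (Stone's theorem). A preliminary point, which is standard and which I would simply borrow from \cite{BGMR2}, is that $U_\tau$ preserves $\cH^{+\infty}$ and that $\tau \mapsto U_\tau \psi$ is a smooth $\cH^{+\infty}$-valued curve for every $\psi \in \cH^{+\infty}$; this follows by iterating the mapping property $\cA_1 \colon \cH^s \to \cH^{s-1}$ together with Assumption I.

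Fix now $r \in \R$ and $\psi \in \cH^{+\infty}$, and set $w_\tau := K_0^r U_\tau \psi \in \cH^{+\infty}$. Commuting $K_0^r$ through $X(t)$ gives the transport-type equation $\pa_\tau w_\tau = -\im\,(X(t) + B_r(t))\, w_\tau$, with $B_r(t) := [K_0^r, X(t)]\,K_0^{-r}$. By Assumption I (i), $K_0^{\pm r} \in \cA_{\pm r}$; hence the graded Lie algebra and graded algebra properties give $[K_0^r, X(t)] \in \cA_{r}$ and then $B_r(t) \in \cA_0 \subset \cL(\cH^0)$. Since $X(t)$ is selfadjoint, $\langle -\im X(t) w_\tau, w_\tau\rangle$ is purely imaginary, so the energy identity collapses to $\frac{\di}{\di\tau}\norm{w_\tau}_0^2 = 2\,\Re\langle -\im B_r(t)w_\tau, w_\tau\rangle \leq 2\,\norm{B_r(t)}_{\cL(\cH)}\,\norm{w_\tau}_0^2$, and Grönwall's inequality yields $\norm{U_\tau\psi}_r = \norm{w_\tau}_0 \leq e^{\norm{B_r(t)}_{\cL(\cH)}}\norm{\psi}_r$ for $\tau \in [0,1]$.

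To make the constant uniform in $t$ I would invoke Remark \ref{rem:control}: applying \eqref{est.1}, \eqref{est.2} and \eqref{est.3} to $B_r(t) = [K_0^r, X(t)]\,K_0^{-r}$ (and using that $K_0^{\pm r}$ are fixed operators with fixed seminorms) gives $\norm{B_r(t)}_{\cL(\cH)} \leq C(r)\,\wp^1_N(X(t))$ for a suitable $N = N(r)$, which is bounded uniformly in $t$ by the hypothesis \eqref{X(t).est}; so $C_r := \exp\big(C(r)\sup_t \wp^1_N(X(t))\big)$ works in the upper bound of \eqref{X(t).est0}. For the lower bound I would apply the upper bound to $-X(t)$, which satisfies the same hypotheses with the same seminorms, so as to control $U_\tau^{-1} = e^{-\im\tau(-X(t))}$; then $\norm{\psi}_r = \norm{U_\tau^{-1}U_\tau\psi}_r \leq C_r\norm{U_\tau\psi}_r$, i.e. $\norm{U_\tau\psi}_r \geq c_r\norm{\psi}_r$ with $c_r := C_r^{-1}$. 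Finally, since $\cH^{+\infty}$ is dense in $\cH^r$ and $U_\tau$ is already bounded on $\cH^0 \supseteq \cH^{+\infty}$, these uniform two-sided bounds extend $U_\tau$ to an operator in $\cL(\cH^r)$ satisfying \eqref{X(t).est0}.

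The main obstacle I expect is not the estimate itself but the justification of the differentiation step — the regularity of $\tau \mapsto U_\tau\psi$ as a $\cH^{+\infty}$-valued curve and the fact that $U_\tau$ genuinely preserves each $\cH^s$: a priori $X(t) \in \cA_1$ is unbounded, so one cannot differentiate carelessly. This is precisely the point settled in \cite{BGMR2}, and the clean way to be rigorous is to first run the whole argument for the spectral truncations $X_n(t) := \Pi_n X(t) \Pi_n$ (with $\Pi_n$ the spectral projector of $K_0$ onto its $n$ lowest eigenvalues), for which all operators are bounded and the computation above is unambiguous, obtaining constants independent of $n$, and then to pass to the limit $n \to \infty$.
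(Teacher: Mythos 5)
The paper does not actually prove Lemma \ref{MR}: it cites it verbatim from \cite{BGMR2} (Lemma 3.3 there), so there is no in-paper proof to compare against. That said, your proposal is a correct reconstruction of the standard argument for such a result, and it is in line with the approach taken in \cite{BGMR2} and in \cite{MaRo}: conjugate by $K_0^r$, isolate the bounded zero-order operator $B_r(t) = [K_0^r, X(t)]K_0^{-r} \in \cA_0$ using the graded Lie algebra structure, kill the skew part coming from $-\im X(t)$ by selfadjointness, run Gr\"onwall, use the filtering seminorm estimates of Remark \ref{rem:control} to get uniformity in $t$ from \eqref{X(t).est}, derive the lower bound by applying the upper bound to $-X(t)$, and close by density of $\cH^{+\infty}$ in $\cH^r$. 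All of these steps are sound; in particular $[K_0^r, X(t)]\in\cA_r$ and $B_r(t)\in\cA_0\subset\cL(\cH^0)$ are exactly what Assumption I (i), (iii), (iv) give.

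You are right to flag the differentiation step as the only genuine subtlety: the energy estimate cannot be run directly because it presupposes that $U_\tau$ preserves $\cH^{+\infty}$, which is part of what is to be proved. Your fix via spectral truncations $X_n(t) := \Pi_n X(t)\Pi_n$ is workable, and it does give uniform-in-$n$ constants because $\Pi_n$ commutes with $K_0^{\pm r}$, so $[K_0^r, X_n(t)]K_0^{-r} = \Pi_n [K_0^r, X(t)]K_0^{-r}\Pi_n$ has operator norm bounded by $\norm{B_r(t)}_{\cL(\cH)}$. To fully close the limit $n\to\infty$ you would still need to record (i) strong resolvent convergence $X_n(t)\to X(t)$ (hence $e^{-\im\tau X_n(t)}\to e^{-\im\tau X(t)}$ strongly on $\cH^0$ by Trotter--Kato), and (ii) that the uniform $\cH^r$-bounds then pass to the limit by weak compactness / lower semicontinuity of the $\cH^r$-norm. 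These are routine but worth stating; with them added, the argument is complete.
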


	\section{Functional calculus }
	\label{app:fc}

In this section we  collect some known results about functional calculus of selfadjoint operators  which are used thorough the paper. 
We begin recalling  	Helffer-Sj\"ostrand formula \cite{HS}, following  the presentation of \cite{davies}.

\begin{definition}
A function $f \in C^\infty(\R, \C)$ will be said to belong to the class $S^\rho$,  $\rho \in \R$,  if $\forall m \in \N_0$, $\exists C_m >0$ such that 
$$
\abs{\frac{\di^m }{\di x^m}f(x) } \leq C_m \la x \ra^{\rho  - m} , \quad \forall x \in \R \ . 
$$
\end{definition}	
As usual we  set the seminorms
	$$
	\wp^\rho_m(f) := \sum_{0 \leq j \leq m} \ \sup_{x \in \R} \abs{\frac{\di^m f(x)}{\di x^m} } \, \la x \ra^{-\rho  + m} \ , \qquad m \in \N_0 \ . 
	$$
Given $f \in S^\rho$, we define its {\em almost analytic extension} as follows: for any  $N \in \N$, put	
	\begin{equation}
	\label{}
\wt f_N \colon \R^2 \to \C , \qquad 	\wt f_N(x, y)  :=  \left( \sum_{\ell = 0}^N f^{(\ell)}(x) \frac{(\im y)^\ell}{\ell !} \right) \, \tau \left(\frac{y}{\la x \ra} \right)
	\end{equation}
	where $\tau \in C^\infty(\R, \R_{\geq 0})$ is a  cut-off function fulfilling $\tau(s) = 1$ for $|s| \leq 1$ and $\tau(s) = 0$ for $|s| \geq 2$.
	It is well known \cite{davies} that the choice of $N$ and of the cut-off function $\tau$  are by no means critical, and even other choices of $\widetilde f_N$ are possible (see e.g. \cite{DG}).
The following properties  are true \cite{davies}:   let $f \in S^\rho$ with $\rho <0$,  then
\begin{align}
& \wt f_N \vert_\R = f  , \qquad 
 {\rm supp }\, \wt f_N \subset \left\lbrace x + \im y \ \colon  \  \ \
x \in {\rm supp }\,  f 
, \quad 
|y| \leq 2 \la x \ra 
\right \rbrace \ , \\
& \abs{\frac{\pa \wt f_N(x,y)}{\pa {\bar z}} } \leq C_N \la x \ra^{\rho - N - 1} \, |y|^N , \qquad
\frac{\pa \wt f_N}{\pa {\bar z}} := \left( \frac{\pa \wt f_N}{\pa x} + \im \frac{\pa \wt f_N}{\pa y} \right) \\
\label{est.aae2}
& \int_{\R^2} \abs{\frac{\pa \wt f_N (z)}{\pa {\bar z}} } \, 
\abs{{\rm Im }\,(z)}^{-p-1}  \di \bar z  \wedge \di z  \leq C_N \, \wp^\rho_{N+2}(f) , \qquad \forall p = 0, \ldots, N ,
\end{align}
where $z  = x + \im y$ and  $\di \bar z  \wedge \di z $ is the Lebesgue measure on $\C$.
%	In particular note that,  on the support of $ \displaystyle{\frac{\pa \wt f_N}{\pa {\bar z}}} $, one has $|y| \sim \la x \ra$.
%The last important property of  almost analytic extensions is the following: 
%\begin{equation}
%\label{}
%\frac{1}{p! } \, f^{(p)}(x) = \frac{\im}{2\pi} \int_{\R^2} \frac{\pa \wt f_N(z)}{\pa {\bar z}} \, (z-x)^{-p-1} \, \di \bar z  \wedge \di z  , \qquad \forall x \in \R \ , \  \ \ \forall p \in \N \ , 
%\end{equation}
%where 

\noindent Given 	$\fH$ a selfadjoint operator and $f \in S^\rho$, $\rho <0$,  the {\em Helffer-Sj\"ostrand formula} defines $f(\fH)$ as 
\begin{equation}
	\label{f(T)}
	f(\fH)  := \frac{\im}{2\pi} \int_{\R^2} \frac{\pa \wt f_N(z)}{\pa {\bar z}} \, (z-\fH)^{-1} \di \bar z  \wedge \di z  = - \frac{1}{\pi} \int_{\R^2} \frac{\pa \wt f_N(z)}{\pa {\bar z}} \, (z-\fH)^{-1} \di x \, \di y  \ . 
	\end{equation}	
%	
%the meaning is 
%$$
%f(T)  =\lim_{R \to \infty} \frac{\im}{2\pi} \int\limits_{\C \cap | {\rm Re }\, z| \leq R } \frac{\pa \wt f(z)}{\pa {\bar z}} \, (z-T)^{-1}\,  \di \bar z \wedge \di z 
%$$	
\begin{theorem}[\cite{davies}]
\label{thm:davies}
Let $f \in S^\rho$, $g \in S^\mu$ with $\rho, \mu <0$ and $\fH$  a selfadjoint operator. Then 
\begin{itemize}
\item[(i)] The operator $f(\fH)$ is independent of $N$ and of the cut-off function $\tau$.
\item[(ii)] The integral in  \eqref{f(T)} is norm convergent 
and $ \norm{f(\fH)}_{\cL(\cH)} \leq \norm{f}_{L^\infty}$.
\item[(iii)]  $f(\fH) \,g(\fH) = (fg)(\fH)$.
\item[(iv)] $\bar f(\fH) = f(\fH)^*$.
\item[(v)] If $f \in C^\infty_c$ has support disjoint from $\sigma(\fH)$, then $f(\fH) = 0$.
\item[(vi)] If $z \notin \R$ and $f_z(x):= (z- x)^{-1}$ for all $x \in \R$, then $f_z \in S^{-1}$ and $f_z(\fH) = (z-\fH)^{-1}$. 
\end{itemize}
\end{theorem}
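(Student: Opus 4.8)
The plan is to reduce all six assertions to two ingredients: the \emph{scalar} Helffer--Sj\"ostrand identity, and the fact that $f(\fH)$ coincides with the ordinary spectral functional calculus $\int_{\sigma(\fH)} f\,\di E$ of $\fH$ (with $E$ the spectral measure of $\fH$). The first ingredient is the statement that for every $\lambda\in\R$ and every $f\in S^\rho$ with $\rho<0$,
$$
f(\lambda)=\frac{\im}{2\pi}\int_{\R^2}\frac{\pa\wt f_N(z)}{\pa\bar z}\,(z-\lambda)^{-1}\,\di\bar z\wedge\di z\ .
$$
I would prove this from the Cauchy--Pompeiu ($\bar z$-)formula applied to the $C^1$ function $\wt f_N$: integrate $\pa_{\bar z}\big(\wt f_N(z)(z-\lambda)^{-1}\big)$ over a box $[-R,R]\times[-M,M]$ with a small disc around $\lambda$ removed, let the disc shrink (the inner boundary term converges to $f(\lambda)$ since $\wt f_N|_\R=f$), and then let $R,M\to\infty$; the boundary terms at $|x|=R$ vanish because $\rho<0$ makes $\wt f_N$ decay there, and those at $|y|=M$ vanish because $\tau(y/\la x\ra)$ confines $\wt f_N$ to the cone $|y|\le 2\la x\ra$, while convergence of the limit integral is guaranteed by $|\pa_{\bar z}\wt f_N(z)|\le C_N\la x\ra^{\rho-N-1}|y|^N$. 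The same computation with a non-real $w$ in place of $\lambda$ yields the identity with $(z-w)^{-1}$, which I keep in reserve.

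\textbf{Convergence and spectral identification.} For $\Im z\neq 0$ one has $\norm{(z-\fH)^{-1}}_{\cL(\cH)}\le|\Im z|^{-1}$, so the operator-valued integrand in \eqref{f(T)} is dominated in norm by $C_N\la x\ra^{\rho-N-1}|y|^{N-1}$, which is integrable over ${\rm supp}\,\pa_{\bar z}\wt f_N$ by \eqref{est.aae2} with $p=0$; hence the integral converges in $\cL(\cH)$ and $f(\fH)$ is well defined (this already gives the first half of (ii)). For the rest I would fix $\psi,\phi\in\cH$, insert $(z-\fH)^{-1}=\int_{\sigma(\fH)}(z-\mu)^{-1}\,\di E(\mu)$ into $\la f(\fH)\psi,\phi\ra$, and exchange the $z$-integral with the spectral integral; Fubini applies because the joint integrand is dominated by the above bound times $|\di\la E(\mu)\psi,\phi\ra|$, and $|\la E(\cdot)\psi,\phi\ra|$ is a finite measure of total mass $\le\norm{\psi}_0\norm{\phi}_0$. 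The scalar identity of the first step with $\lambda=\mu$ then yields $\la f(\fH)\psi,\phi\ra=\int_{\sigma(\fH)}f(\mu)\,\di\la E(\mu)\psi,\phi\ra$, i.e. $f(\fH)=\int_{\sigma(\fH)}f\,\di E$.

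\textbf{The six items.} Everything now follows from this identification. Its right-hand side depends on neither $N$ nor $\tau$, which is (i). The bound in (ii) is the elementary spectral estimate $\norm{f(\fH)}_{\cL(\cH)}=\sup_{\mu\in\sigma(\fH)}|f(\mu)|\le\norm{f}_{L^\infty}$. Items (iii) and (iv) are multiplicativity and the $*$-property of the spectral calculus, $\int fg\,\di E=(\int f\,\di E)(\int g\,\di E)$ and $\int\bar f\,\di E=(\int f\,\di E)^*$ --- noting for (iii) that $fg\in S^{\rho+\mu}$ with $\rho+\mu<0$, so $(fg)(\fH)$ is defined by the same formula. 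Item (v) holds because ${\rm supp}\,f\cap\sigma(\fH)=\emptyset$ forces $E({\rm supp}\,f)=0$. For (vi) one checks from the definition that $f_z(x)=(z-x)^{-1}$ satisfies $|\pa_x^m f_z(x)|\le C_m\,{\rm dist}(z,\R)^{-m-1}\la x\ra^{-1}$, hence $f_z\in S^{-1}$, and then $f_z(\fH)=\int_{\sigma(\fH)}(z-\mu)^{-1}\,\di E(\mu)=(z-\fH)^{-1}$.

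\textbf{Variant and main obstacle.} Alternatively, closer to Davies, one can establish (i) and (iii) without the spectral theorem: for (i), the difference of two admissible almost analytic extensions vanishes on $\R$, and an integration by parts in $z$ (valid since $(z-\fH)^{-1}$ is holomorphic off $\R$ and the defect is $O(|y|)$ there) shows the two defining integrals agree; for (iii), combine the resolvent identity $(z-\fH)^{-1}(w-\fH)^{-1}=(w-z)^{-1}\big[(z-\fH)^{-1}-(w-\fH)^{-1}\big]$ with the non-real scalar formula. In either route the technical heart is the first step: carefully justifying the passage to the limit in the Cauchy--Pompeiu formula when $\wt f_N$ is only decaying (not compactly supported) in $x$ and lives in the cone $|y|\lesssim\la x\ra$, and checking the absolute convergence that licenses the Fubini exchange in the identification step.
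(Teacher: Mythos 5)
The paper does not prove this theorem: it cites it directly from Davies \cite{davies} (and, separately, the identification with the spectral calculus is deferred in Remark~\ref{rem:st} to \cite{DiSj}, Theorem~8.1). So there is no in-paper argument to compare against, and the right benchmark is Davies' own proof.

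Your reconstruction is essentially correct and self-contained. The reduction chain --- scalar Cauchy--Pompeiu identity $\Rightarrow$ absolute convergence of the operator integral $\Rightarrow$ Fubini exchange against the spectral resolution $\Rightarrow$ $f(\fH)=\int f\,\di E$ $\Rightarrow$ items (i)--(vi) --- is valid, and you have correctly identified the two places where care is actually needed: the decay of the boundary terms in the Pompeiu argument (using $\rho<0$ together with the cone ${\rm supp}\,\wt f_N\subset\{|y|\le 2\la x\ra\}$), and the absolute-integrability bound $\int|\pa_{\bar z}\wt f_N|\,|\Im z|^{-1}\lesssim\wp^\rho_{N+2}(f)$ (which is exactly \eqref{est.aae2} with $p=0$) that licenses Fubini against the finite complex measure $\la E(\cdot)\psi,\phi\ra$. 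Logically this route "front-loads" the spectral-theorem identification and then treats (i)--(v) as corollaries, whereas Davies proceeds in the opposite order: he first establishes (ii), then (i) by an integration-by-parts argument using that the difference of two almost-analytic extensions vanishes to infinite order on $\R$, then (iii) via the resolvent identity, and only afterwards identifies the result with $\int f\,\di E$. Your variant paragraph sketches exactly that alternative, so you are aware of it. Both work; yours is shorter at the cost of invoking the spectral theorem earlier.

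One small slip in item (vi): the bound you state, $|\pa_x^m f_z(x)|\le C_m\,{\rm dist}(z,\R)^{-m-1}\la x\ra^{-1}$, is true but too weak to place $f_z$ in $S^{-1}$, which requires $|\pa_x^m f_z(x)|\le C_m(z)\la x\ra^{-1-m}$ for every $m$. The correct estimate is elementary: $f_z^{(m)}(x)=m!\,(z-x)^{-m-1}$, and $|z-x|\ge c_z\la x\ra$ uniformly in $x$ (for $|x|\ge 2|z|$ one has $|z-x|\ge|x|/2$, and on the compact set $|x|\le 2|z|$ one has $|z-x|\ge|\Im z|>0$), giving $|f_z^{(m)}(x)|\le C_m(z)\la x\ra^{-1-m}$. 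With that correction the proof of (vi) is complete.

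A cosmetic remark: the paper's definition $\frac{\pa\wt f_N}{\pa\bar z}:=\pa_x\wt f_N+\im\,\pa_y\wt f_N$ omits the usual factor $\frac12$; with that convention the Cauchy--Pompeiu identity picks up a factor of $2$ relative to the prefactor $-\frac{1}{\pi}$ in \eqref{f(T)}. This is a typo in the paper, not in your argument, but you should use the standard $\pa_{\bar z}=\frac12(\pa_x+\im\pa_y)$ when you actually run the Green's-theorem computation so that the constants come out right.
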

\begin{remark}
\label{rem:st}
Given $f \in S^\rho$, $\rho <0$  and $\fH$ selfadjoint, the operator $f(\fH)$ defined via Helffer-Sj\"ostrand formula coincides with the classical definition given by the spectral theorem, namely
$$
f(\fH) = \int_{\R} f(\lambda) \, \di E(\lambda)
$$
where $\di E(\lambda)$ is the spectral resolution of $\fH$.
For a proof, see e.g. \cite{DiSj}, Theorem 8.1.
\end{remark}
 Next we recall  expansion formulas for commutators.
We start from the basic identities 
	\begin{align}
		\label{ad2}
	{\rm ad }^n_\fA(\fP \fQ) & = \sum_{k=0}^n \binom{n}{k} \, {\rm ad }^{n-k}_\fA(\fP) \, {\rm ad }^k_\fA(\fQ) , \qquad 
	 [\fP, \fA^n]  =  \sum_{j = 1}^n c_{n,j} \, {\rm ad }^j_\fA(\fP) \, \fA^{n-j} \ . 
	\end{align}
	For the next lemma see e.g. \cite[Lemma C.3.1]{DG} or \cite[Appendix B]{HuSi}.
	\begin{lemma}[Commutator expansion  formula]
	\label{lem:DG}
	Let  $k \in \N $ and  
	 $\fA, \fB$   selfadjoint operators with 
	$$
	\norm{{\rm ad}_\fA^j(\fB)}_{\cL(\cH)} < \infty , \qquad  \forall \, 1 \leq j \leq k \ . 
	$$
	Let  $f \in S^\rho$ with $\rho < 0$, then one has the  right and left   commutator expansions
	\begin{align}
	\label{r.exp}
		[\fB, f(\fA)] &  = \sum_{j = 1}^{k-1} \frac{1}{j!} \, f^{(j)}(\fA) \, {\rm ad}_\fA^j(\fB) + R_k(f, \fA, \fB) \\
%		\notag
%		&  R_k(f, \fA, \fB) := \frac{\im}{2\pi} \int_{\R^2}  \frac{\pa \wt f_N(z)}{\pa {\bar z}} \, (z-\fH)^{-k}  \,{\rm ad}_\fA^k(\fB) \,   (z-\fH)^{-1} \di \bar z  \wedge \di z 
%		\end{align}
%	and the left one
%	\begin{align}
	\label{l.exp}
%	[\fB, f(\fA)]
		& = \sum_{j = 1}^{k-1} \frac{(-1)^{j-1}}{j!} \, {\rm ad}_\fA^j(\fB) \,  f^{(j)}(\fA) \,+ \wt R_k(f, \fA, \fB)
%	\notag
%	& \wt R_k(f, \fA, \fB) :=  (-1)^{k-1} \frac{\im}{2\pi} \int_{\R^2}  \frac{\pa \wt f_N(z)}{\pa {\bar z}} \, (z-\fH)^{-1}  \,{\rm ad}_\fA^k(\fB) \,   (z-\fH)^{-k} \di \bar z  \wedge \di z  . 
	\end{align}
	where the operators  $R_k, \wt R_k$ fulfill 
	\begin{equation}
	\label{R.exp}
	\norm{R_k(f, \fA, \fB)}_{\cL(\cH)} , \quad
	\norm{\wt R_k(f, \fA, \fB)}_{\cL(\cH)} \leq  \, C_N \, \wp^\rho_{k+2}(f)  \, \norm{{\rm ad}_\fA^k(\fB)}_{\cL(\cH)}  \ . 
	\end{equation}
	\end{lemma}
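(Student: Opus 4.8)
The plan is to obtain both expansions directly from the Helffer--Sj\"ostrand representation \eqref{f(T)} of $f(\fA)$, which is norm-convergent because $f\in S^\rho$ with $\rho<0$ (Theorem \ref{thm:davies}(ii)). Fix $N:=k$, write $R(z):=(z-\fA)^{-1}$ and $B_j:={\rm ad}_\fA^j(\fB)$ (each bounded on $\cH$ by hypothesis, for $1\le j\le k$), and start from the resolvent--commutator identities
\[
[\fB,R(z)]=R(z)\,B_1\,R(z)\ ,\qquad\qquad [B_j,R(z)]=R(z)\,B_{j+1}\,R(z)\quad(1\le j\le k-1)\ ,
\]
each obtained by commuting the operator in question with $R(z)(z-\fA)=\uno$ and using $[\fB,z-\fA]=-{\rm ad}_\fA(\fB)$. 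Note that the right-hand sides are bounded operators; this is what makes the subsequent manipulations legitimate (and gives meaning to $[\fB,f(\fA)]$) even though $\fB$ itself need not be bounded.

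For the right expansion I would telescope, $[\fB,R]=R B_1R=R^2B_1+R[B_1,R]=R^2B_1+R^2B_2R$, and after $k-1$ iterations
\[
[\fB,R(z)]=\sum_{j=1}^{k-1}R(z)^{j+1}B_j+R(z)^kB_kR(z)\ .
\]
Inserting this into \eqref{f(T)} for $[\fB,f(\fA)]$, it remains to identify $-\tfrac1\pi\int_{\R^2}\tfrac{\pa\wt f_N(z)}{\pa\bar z}(z-\fA)^{-(j+1)}\,\di x\,\di y=\tfrac1{j!}f^{(j)}(\fA)$; by the spectral theorem (Remark \ref{rem:st}) this reduces to the scalar identity $-\tfrac1\pi\int\tfrac{\pa\wt f_N}{\pa\bar z}(z-\lambda)^{-(j+1)}\,\di x\,\di y=\tfrac1{j!}f^{(j)}(\lambda)$, which follows by differentiating $j$ times in $\lambda$ the scalar Helffer--Sj\"ostrand formula $f(\lambda)=-\tfrac1\pi\int\tfrac{\pa\wt f_N}{\pa\bar z}(z-\lambda)^{-1}\,\di x\,\di y$ (differentiation under the integral being justified by the decay $|\pa_{\bar z}\wt f_N(z)|\le C_N\la x\ra^{\rho-N-1}|y|^N$). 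This gives \eqref{r.exp} with $R_k(f,\fA,\fB)=-\tfrac1\pi\int\tfrac{\pa\wt f_N}{\pa\bar z}(z-\fA)^{-k}B_k(z-\fA)^{-1}\,\di x\,\di y$. The left expansion \eqref{l.exp} is the same telescoping performed on the right instead, $[\fB,R]=RB_1R=B_1R^2+[R,B_1]R=B_1R^2-RB_2R^2$, and so on, which produces the alternating signs $(-1)^{j-1}$ and $\wt R_k(f,\fA,\fB)=(-1)^{k-1}(-\tfrac1\pi)\int\tfrac{\pa\wt f_N}{\pa\bar z}R(z)B_kR(z)^k\,\di x\,\di y$. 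Finally the bound \eqref{R.exp} follows from $\norm{(z-\fA)^{-p}}_{\cL(\cH)}\le|{\rm Im}\,z|^{-p}$, which gives $\norm{R_k}_{\cL(\cH)},\norm{\wt R_k}_{\cL(\cH)}\le\tfrac1\pi\norm{B_k}_{\cL(\cH)}\int_{\R^2}|\pa_{\bar z}\wt f_N(z)|\,|{\rm Im}\,z|^{-k-1}\,\di x\,\di y\le C_k\,\wp^\rho_{k+2}(f)\,\norm{B_k}_{\cL(\cH)}$ by \eqref{est.aae2} applied with $p=k=N$.

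The only point that requires genuine care --- and the reason this is stated as a lemma rather than a one-line remark --- is justifying the commutator calculus when $\fB$ is unbounded: one performs all identities on the dense set $\cH^{+\infty}$ (or $D(\fB)$) and then observes that, once a single resolvent has been applied, every expression is a bounded operator built from the $B_j$'s and $R(z)$, so it extends by continuity and the integrals converge absolutely by the decay estimates on $\pa_{\bar z}\wt f_N$. Everything else is bookkeeping of a telescoping commutator expansion. Alternatively --- and this is the route the paper takes --- one simply invokes \cite[Lemma C.3.1]{DG} or \cite[Appendix B]{HuSi}, from which the statement is quoted.
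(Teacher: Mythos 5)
The paper does not actually prove Lemma~\ref{lem:DG}: it is quoted from \cite[Lemma~C.3.1]{DG} and \cite[Appendix~B]{HuSi}, as the text immediately before the statement indicates and as you yourself note at the end. Your proposal therefore fills in a proof the paper omits, and it does so by exactly the argument used in those references: the Helffer--Sj\"ostrand representation~\eqref{f(T)} of $f(\fA)$, the resolvent--commutator identity $[\fB,(z-\fA)^{-1}]=(z-\fA)^{-1}\,{\rm ad}_\fA(\fB)\,(z-\fA)^{-1}$, and a $k$-fold telescoping that moves the resolvents either all to the left (giving~\eqref{r.exp}) or all to the right (producing the alternating signs in~\eqref{l.exp}). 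The identification $-\frac{1}{\pi}\int\partial_{\bar z}\wt f_N\,(z-\fA)^{-(j+1)}\,\di x\,\di y=\frac{1}{j!}f^{(j)}(\fA)$ via the scalar identity and Remark~\ref{rem:st} is correct, and so is the final bound: with $N=k$ the remainder $R_k$ involves $k+1$ resolvents, so $\norm{(z-\fA)^{-p}}\le|{\rm Im}\,z|^{-p}$ together with~\eqref{est.aae2} at $p=N=k$ gives precisely $C_k\,\wp^\rho_{k+2}(f)\,\norm{{\rm ad}^k_\fA(\fB)}_{\cL(\cH)}$, matching~\eqref{R.exp}. Your remark on working first on $\cH^{+\infty}$ and then extending by boundedness is the right way to handle the (potentially unbounded) $\fB$. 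In short: the proof is correct and is the standard one, which the paper delegates to the literature; you have reproduced it in full, which is more self-contained but not a different route.
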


\begin{lemma}
\label{ad.A.g(H)}
Let $k \in \N$ and 
	 $\fA, \fH$  selfadjoint operators such that 
	\begin{equation}
	\label{adAH}
\norm{{\rm ad}_\fA^j(\fH)}_{\cL(\cH)} < \infty , \qquad \forall \,  1 \leq j \leq k \ . 
	\end{equation}
	Let $g \in S^\rho$ with $\rho <0$. Then
	\begin{equation*}
\norm{	{\rm ad}_\fA^j( g(\fH)) }_{\cL(\cH)}  < \infty \, \quad \forall 1 \leq j \leq k   \ .
	\end{equation*}
\end{lemma}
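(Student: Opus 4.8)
The plan is to combine the Helffer--Sj\"ostrand representation \eqref{f(T)} of $g(\fH)$ with the estimate \eqref{est.aae2} on the almost analytic extension. Since $g\in S^\rho$ with $\rho<0$, we may fix $N:=k$ and write, as in \eqref{f(T)},
\[
g(\fH)=-\frac1\pi\int_{\R^2}\frac{\pa\wt g_N(z)}{\pa\bar z}\,(z-\fH)^{-1}\,\di x\,\di y ,\qquad z=x+\im y .
\]
The two elementary identities driving the argument are $[(z-\fH)^{-1},\fA]=(z-\fH)^{-1}\,{\rm ad}_\fA(\fH)\,(z-\fH)^{-1}$ (valid as a bounded operator because ${\rm ad}_\fA(\fH)\in\cL(\cH)$) and ${\rm ad}_\fA\big({\rm ad}_\fA^{a}(\fH)\big)={\rm ad}_\fA^{a+1}(\fH)$; together with the Leibniz rule they show that applying ${\rm ad}_\fA$ to the integrand of such a representation produces again a finite sum of integrands of the same shape.

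More precisely, I would prove by induction on $m=1,\dots,k$ that
\[
{\rm ad}_\fA^m\big(g(\fH)\big)=-\frac1\pi\int_{\R^2}\frac{\pa\wt g_N(z)}{\pa\bar z}\,Q_m(z)\,\di x\,\di y ,
\]
where $Q_m(z)$ is a finite linear combination, with universal integer coefficients, of products
\[
(z-\fH)^{-1}\,{\rm ad}_\fA^{a_1}(\fH)\,(z-\fH)^{-1}\,{\rm ad}_\fA^{a_2}(\fH)\cdots {\rm ad}_\fA^{a_\ell}(\fH)\,(z-\fH)^{-1},
\qquad a_i\ge1,\ \ a_1+\dots+a_\ell=m,\ \ 1\le\ell\le m .
\]
The base case $m=1$ is the computation above. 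For the inductive step one commutes $Q_m(z)$ once more with $\fA$ and distributes ${\rm ad}_\fA$ over the product by Leibniz: each resolvent factor $(z-\fH)^{-1}$ contributes a new ${\rm ad}_\fA(\fH)$ flanked by two resolvents (so $\ell$ increases by one and $\sum a_i+1=m+1$), while each factor ${\rm ad}_\fA^{a_i}(\fH)$ becomes ${\rm ad}_\fA^{a_i+1}(\fH)$ (so $\ell$ is unchanged and $\sum a_i$ increases by one). In both cases the resulting terms are of the claimed form with $m$ replaced by $m+1$, and at every stage one first reads the identity as a form identity on a dense domain and then observes that the right--hand side is a bounded operator (by the next paragraph), so that ${\rm ad}_\fA^m(g(\fH))$ extends to $\cL(\cH)$ and the procedure may be iterated.

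Finally, each such integral is norm convergent: on ${\rm supp}\,\wt g_N$ one has $\|(z-\fH)^{-1}\|_{\cL(\cH)}\le|{\rm Im}\,z|^{-1}$, and by hypothesis $\|{\rm ad}_\fA^{a_i}(\fH)\|_{\cL(\cH)}<\infty$ for every $a_i\le m\le k$, so a product with $\ell$ commutator factors has $\ell+1$ resolvents and its operator norm is bounded by
\[
C\,\Big(\textstyle\prod_{i=1}^\ell\|{\rm ad}_\fA^{a_i}(\fH)\|_{\cL(\cH)}\Big)\,\Big|\tfrac{\pa\wt g_N(z)}{\pa\bar z}\Big|\,|{\rm Im}\,z|^{-(\ell+1)} ;
\]
since $\ell+1\le m+1\le k+1=N+1$, the estimate \eqref{est.aae2} with $p=\ell\le N$ gives a finite bound, of order $\wp^\rho_{N+2}(g)\,\prod_{i=1}^\ell\|{\rm ad}_\fA^{a_i}(\fH)\|_{\cL(\cH)}$. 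Hence ${\rm ad}_\fA^m(g(\fH))\in\cL(\cH)$ for every $m\le k$, which is the assertion. The only genuinely delicate point is the justification, at the operator (rather than merely form) level, of the commutator identity $[(z-\fH)^{-1},\fA]=(z-\fH)^{-1}{\rm ad}_\fA(\fH)(z-\fH)^{-1}$ and of the exchange of ${\rm ad}_\fA$ with the $\di x\,\di y$ integral; both are standard and are carried out exactly as in \cite{DG,HuSi} and in the proof of Lemma \ref{lem:DG}, while everything else is bookkeeping of the combinatorial structure of $Q_m(z)$.
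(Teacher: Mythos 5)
Your proposal is correct and follows essentially the same route as the paper's own proof: represent $g(\fH)$ via the Helffer--Sj\"ostrand formula, pull the iterated commutator ${\rm ad}_\fA^m$ inside the integral, expand ${\rm ad}_\fA^m((z-\fH)^{-1})$ by induction using $[(z-\fH)^{-1},\fA]=(z-\fH)^{-1}\,{\rm ad}_\fA(\fH)\,(z-\fH)^{-1}$ into a sum of resolvent/commutator products, bound each term by $\abs{{\rm Im}\,z}^{-\ell-1}$ times products of $\norm{{\rm ad}_\fA^{a_i}(\fH)}_{\cL(\cH)}$, and conclude by the almost-analytic estimate \eqref{est.aae2}. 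The only cosmetic difference is that you make the combinatorial structure of the expansion fully explicit and organize it as an induction on $m$, whereas the paper writes down the $j$-fold expansion directly; the content and the estimates are the same.
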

\begin{proof}
Take $N \geq k$ and use 
 Helffer-Sj\"ostrand formula to write 
\begin{equation}
\label{ad.gJ.K0}
{\rm ad }^j_{\fA}(g(\fH)) = \frac{\im}{2\pi} \int_{\R^2} \frac{\pa \wt g_N(z)}{\pa \bar z} \, {\rm ad }^j_{\fA}\big((z - \fH)^{-1}\big) \, \di \bar z \wedge \di z  . 
\end{equation}
%where $\wt g\colon :\R^2 \to \C$ is an almost analytic extension of $g$ fulfilling
%\begin{equation}
%\label{g_J.aae}
%\wt g \vert_\R = g , \qquad 
%{\rm supp } \, \wt g \subset I \times \{ \la y \ra \leq C\}, \qquad 
%\abs{\frac{\pa \wt g}{\pa \bar z}} \leq C |y|^N , \ \ |y| \to 0 
%\end{equation}
%for some $N$ sufficiently large (to be fixed later).\\
As
${\rm ad }_{\fA}\big((z - \fH)^{-1}\big) = (z - \fH)^{-1} \,  {\rm ad }_{\fA}(\fH)  \, (z - \fH)^{-1}$,   by induction one gets for $j = 1, \ldots, k$
$$
{\rm ad }_{\fA}^j\big((z - \fH)^{-1}\big)
= \sum_{\ell = 1}^j \sum_{k_1 + \cdots +  k_\ell = j \atop k_1, \ldots , k_\ell \geq 1} 
c_{k_1 \cdots k_\ell}^{\ell,j} \, 
(z - \fH)^{-1} \,  {\rm ad }_{\fA}^{k_1}(\fH)  \, (z - \fH)^{-1} 
{\rm ad }_{\fA}^{k_2}(\fH)
\cdots
(z - \fH)^{-1} \,  {\rm ad }_{\fA}^{k_\ell}(\fH)  \, (z - \fH)^{-1}
$$
Using  \eqref{adAH} and the estimate $
\norm{ (z - \fH)^{-1}}_{\cL(\cH)} \leq {\abs{{\rm Im }\, (z)}^{-1}}$, $  \forall z \in \C\setminus \R$,  one has 
for $j = 1, \ldots, k$
\begin{equation*}
\norm{{\rm ad }_{\fA}^j\big((z - \fH)^{-1}\big)}_{\cL(\cH)}
\leq 
\sum_{\ell = 1}^j C_\ell \,  {\abs{{\rm Im }\, (z)}}^{-\ell -1} , \quad \forall z \in \C\setminus \R . 
\end{equation*}
 Inserting this estimate into
 \eqref{ad.gJ.K0} and using  \eqref{est.aae2} we bound
 for any $j = 1, \ldots, k$ 
 $$
\norm{ {\rm ad }^j_{\fA}(g_J(\fH)) }_{\cL(\cH)}
\lesssim 
  \sum_{\ell = 1}^j 
 \int_{\R^2} 
\abs{\frac{\pa \wt g_N(z)}{\pa \bar z}}  \, \,  {\abs{{\rm Im }\, (z)}}^{-\ell -1} \di \bar z \wedge \di z 
\lesssim \wp^\rho_{N+2}(g) <  \infty \ .$$
\end{proof}

\begin{lemma}
\label{lem:g}
 Let $g \in C^\infty_c(\R, \R)$. Let  $\fH, \fB \in \cL(\cH)$ be  selfadjoint. Then $\exists \, C >0$ such that 
\begin{equation}
\label{lem:g.est}
\norm{g(\fH + \fB ) - g(\fH)}_{\cL(\cH)} \leq C \norm{\fB}_{\cL(\cH)} \ . 
\end{equation}
If $\fB$ is compact on $\cH$, so is $g(\fH+ \fB ) - g(\fH)$.
\end{lemma}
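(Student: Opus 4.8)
The plan is to derive everything from the Helffer--Sj\"ostrand formula \eqref{f(T)} together with the resolvent identity. Since $g\in C^\infty_c(\R,\R)$, in particular $g\in S^\rho$ for every $\rho<0$, so $g(\fH)$ and $g(\fH+\fB)$ are defined by \eqref{f(T)} and, by Remark \ref{rem:st}, coincide with the spectral-theorem definitions. Fix any $N\geq 1$. Using \eqref{f(T)} for both operators and subtracting,
\begin{equation*}
g(\fH+\fB) - g(\fH) = \frac{\im}{2\pi}\int_{\R^2}\frac{\pa\wt g_N(z)}{\pa\bar z}\,\Big[(z-\fH-\fB)^{-1} - (z-\fH)^{-1}\Big]\,\di\bar z\wedge\di z .
\end{equation*}

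Next I would insert the resolvent identity $(z-\fH-\fB)^{-1} - (z-\fH)^{-1} = (z-\fH-\fB)^{-1}\,\fB\,(z-\fH)^{-1}$ together with the elementary bound $\norm{(z-\fL)^{-1}}_{\cL(\cH)}\leq \abs{\Im z}^{-1}$, valid for any selfadjoint $\fL\in\cL(\cH)$ and $z\in\C\setminus\R$, applied to $\fL=\fH$ and $\fL=\fH+\fB$. This gives
\begin{equation*}
\Big\|(z-\fH-\fB)^{-1} - (z-\fH)^{-1}\Big\|_{\cL(\cH)}\leq \abs{\Im z}^{-2}\,\norm{\fB}_{\cL(\cH)} .
\end{equation*}
Plugging this into the integral and invoking the almost-analytic-extension estimate \eqref{est.aae2} with $p=1$ (legitimate since $N\geq 1$) yields
\begin{equation*}
\norm{g(\fH+\fB) - g(\fH)}_{\cL(\cH)}\leq \frac{1}{2\pi}\,\norm{\fB}_{\cL(\cH)}\int_{\R^2}\Big|\frac{\pa\wt g_N(z)}{\pa\bar z}\Big|\,\abs{\Im z}^{-2}\,\di\bar z\wedge\di z\leq C_N\,\wp^\rho_{N+2}(g)\,\norm{\fB}_{\cL(\cH)} ,
\end{equation*}
which is \eqref{lem:g.est} with $C:=C_N\,\wp^\rho_{N+2}(g)$.

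For the last assertion, suppose $\fB$ is compact. Then for each fixed $z\in\C\setminus\R$ the operator $(z-\fH-\fB)^{-1}\,\fB\,(z-\fH)^{-1}$ is compact, being the product of a bounded, a compact and a bounded operator. The integrand above is thus a compact-operator-valued function of $z$; by the displayed norm bound it is absolutely integrable in operator norm, and it vanishes as $\Im z\to 0$ (again because $N\geq 1$, via the support and growth properties of $\wt g_N$), so there is no difficulty near the real axis. Hence $g(\fH+\fB)-g(\fH)$ is an operator-norm convergent (Bochner) integral of compact operators, i.e. a norm limit of finite Riemann sums of compact operators; since the ideal of compact operators is closed in $\cL(\cH)$, the difference $g(\fH+\fB)-g(\fH)$ is compact. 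The only point requiring a little care is precisely this exchange of integral and the compact ideal, but it is immediate from the uniform norm estimate already established, so I do not expect a genuine obstacle.
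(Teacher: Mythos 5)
Your argument is correct and follows exactly the same route as the paper's own proof: Helffer--Sj\"ostrand formula, the second resolvent identity, the elementary resolvent bound $\|(z-\fL)^{-1}\|\leq|\Im z|^{-1}$, the almost-analytic estimate \eqref{est.aae2}, and then closedness of the compact ideal under operator-norm limits for the final assertion. No gaps; the remark about the norm-convergent integral of compact operators is precisely the (tacit) step the paper relies on as well.
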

\begin{proof}
Take $N \geq 1$. Using Helffer-Sj\"ostrand formula and the resolvent identity we obtain
$$
g(\fH + \fB) - g(\fH) = 
 \frac{\im}{2\pi} \int_{\R^2} 
 \frac{\pa \wt g_N(z)}{\pa \bar z} \, \big(z - (\fH+ \fB)\big)^{-1}\, \fB \, (z-\fH)^{-1} \, \di \bar z \wedge \di z  \  . 
$$
Then use  $\norm{\big(z - (\fH+ \fB)\big)^{-1}}_{\cL(\cH)} $, 
$\norm{(z - \fH)^{-1}}_{\cL(\cH)} \leq \abs{{\rm Im} (z)}^{-1}$ for $z \in \C \setminus \R$ and
 \eqref{est.aae2}.\\
 If $\fB$ is compact then $\big(z - (\fH+ \fB)\big)^{-1}\, \fB \, (z-\fH)^{-1} $ is a compact operator for any $z \in \C \setminus  \R$.
\end{proof}

\section{Local energy decay estimates}
\label{app:minimal}
In this section we prove a local energy decay estimate starting from Mourre estimate. The result is essentially known   but  we could not find in the literature a statement exactly as the one we use in the paper,  so we  include here a  proof, which follows closely the one of Lemma 4.1 of   
\cite{GNRS}.
In  this part we do not require  pseudodifferential properties of the operators. We shall assume conditions (M1) and (M2) at page \pageref{M1}.
\begin{theorem}[Local energy decay estimate]
\label{thm:SS}
Fix $k \in \N$ and 
assume {\rm (M1)--(M2)}  with $\tN \geq 4k+2$ and $\fK = 0$. 
Then for any interval $J \subset I$, any function $g_J \in C^\infty_c(\R, \R_{\geq 0})$ with ${\rm supp }\, g_J \subset I$, $g_J = 1$ on $J$,  there exists $C >0$ such that 
\begin{equation}
\label{SS}
\norm{ \la \fA \ra^{-k} \, e^{- \im \fH t} \, g_J(\fH) \, \psi}_0 \leq C \la t \ra^{-k} \norm{ \la \fA \ra^{k}\, g_J(\fH) \psi}_0  , \quad \forall t \in \R , 
\end{equation}
for any $\psi$ such that the r.h.s. is finite.
\end{theorem}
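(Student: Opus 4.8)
The plan is to prove the local energy decay estimate \eqref{SS} by a bootstrap argument based on Mourre's theory, following the scheme of Hunziker--Sigal--Soffer and the adaptation in \cite{GNRS}. The heart of the matter is the minimal velocity estimate, which is the propagation estimate that controls the behaviour of $e^{-\im\fH t}g_J(\fH)\psi$ inside a ball in the conjugate operator $\fA$ centered at the origin and growing linearly in $t$. Once one has good decay of $\norm{F(|\fA|\le \epsilon|t|)\,e^{-\im\fH t}g_J(\fH)\psi}_0$ for a sufficiently small $\epsilon>0$ (with $F$ a smooth cutoff), the estimate \eqref{SS} follows by writing $\la\fA\ra^{-k}=\la\fA\ra^{-k}F(|\fA|\le\epsilon|t|)+\la\fA\ra^{-k}F(|\fA|\ge\epsilon|t|)$ and bounding the first term by the minimal velocity estimate and the second simply by $\la\epsilon t\ra^{-k}\norm{e^{-\im\fH t}g_J(\fH)\psi}_0\le C\la t\ra^{-k}\norm{g_J(\fH)\psi}_0$.

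The main technical device is the construction of a suitable \emph{propagation observable}. I would set, for a parameter $s$ ranging over $[1,|t|]$ (or $[1,\infty)$ with a time-rescaling), a family of operators of the form $\Phi_s=g_J(\fH)\,\chi\!\left(\tfrac{\fA}{s}\right)^{2}g_J(\fH)$, or more robustly a Heisenberg-derivative-friendly version with a function that is monotone along the classical flow generated by the symbol $\lambda$ of $\fH$, and compute the Heisenberg derivative $\mathbf{D}\Phi_s:=\partial_s\Phi_s+\im[\fH,\Phi_s]$. The strict Mourre estimate (hypothesis $\fK=0$ on the interval $I$, with $J\subset I$ and $\mathrm{supp}\,g_J\subset I$) produces a strictly positive lower bound $\im g_J(\fH)[\fH,\fA]g_J(\fH)\ge\theta g_J(\fH)^2$ which, when plugged into the commutator $\im[\fH,\chi(\fA/s)]$ via the commutator expansion formula (Lemma \ref{lem:DG}), yields that $\mathbf{D}\Phi_s\le -\tfrac{c}{s}\,\Psi_s+(\text{integrable remainders})$ for some positive $c$ and some nonnegative observable $\Psi_s$ controlling $F(|\fA|\le\epsilon s)$. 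Integrating this differential inequality in $s$ à la Kato's smoothness / RAGE argument gives, after summing up, the required decay. The hypothesis $\tN\ge 4k+2$ in (M1) is exactly what is needed so that the commutator expansions can be carried out to high enough order (roughly $2k$ iterated commutators, with a factor of $2$ lost because the observable involves $\fA^2$ weights) while keeping all remainder terms bounded; one peels off decay factors $s^{-1}$ one at a time, $2k$ times, to reach the $\la t\ra^{-k}$ rate for the $\la\fA\ra^{-k}$-weighted norm (the square again accounting for the squaring of weights).

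Concretely the steps are: (1) reduce to the case where $\psi$ is replaced by $g_J(\fH)\psi$ and note $g_J(\fH)e^{-\im\fH t}=e^{-\im\fH t}g_J(\fH)$; (2) establish a zeroth-order a priori bound and the basic identity for $\mathbf{D}\Phi_s$; (3) invoke the strict Mourre estimate \eqref{M2} with $\fK=0$ localized by $g_J$, and use Lemma \ref{lem:DG} (with (M1), $\tN\ge 4k+2$) to convert $\im[\fH,\chi(\fA/s)]$ into $\tfrac{1}{s}\chi'(\fA/s)\,\im[\fH,\fA]\,(\cdots)$ plus remainders of size $O(s^{-2})$ in operator norm against $g_J(\fH)$-localized states; (4) derive the differential inequality $\tfrac{d}{ds}\la\Phi_s\varphi_s,\varphi_s\ra\le -\tfrac{c}{s}\la\Psi_s\varphi_s,\varphi_s\ra+\tfrac{C}{s^2}\norm{g_J(\fH)\psi}_0^2$ where $\varphi_s=e^{-\im\fH s}g_J(\fH)\psi$; (5) integrate from $1$ to $|t|$ to get the base minimal velocity estimate $\int_1^\infty\tfrac{1}{s}\norm{\Psi_s^{1/2}\varphi_s}_0^2\,ds\le C\norm{g_J(\fH)\psi}_0^2$, and then bootstrap with weighted observables $\la\fA/s\ra^{2p}$-type functions, $p=1,\dots,k$, each iteration upgrading the power; (6) finally split $\la\fA\ra^{-k}=\la\fA\ra^{-k}\theta_1(\fA/t)+\la\fA\ra^{-k}\theta_\infty(\fA/t)$ as above and conclude \eqref{SS}. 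I expect the main obstacle to be the careful bookkeeping in step (5)--(3): keeping track, through the iterated commutator expansions, of exactly which remainder terms are controlled by which power of $s^{-1}$ and verifying that the order $\tN\ge 4k+2$ suffices; this is where the argument of \cite[Lemma 4.1]{GNRS} must be adapted with the correct numerology for the $\la\fA\ra^{\pm k}$ weights rather than $\la\fA\ra^{\pm 1}$, and where one must be slightly careful that all commutators $\mathrm{ad}^n_\fA(\fH)$ are genuinely bounded (which here is free, since in the application $\mathrm{ad}^n_A(H_N)\in\cA_0\subset\cL(\cH)$ for all $n$, so the restriction $\tN\ge 4k+2$ is automatically satisfied).
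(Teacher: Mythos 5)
Your high-level strategy---propagation observable, commutator expansions via Lemma~\ref{lem:DG}, strict Mourre estimate, split into slow/fast regions in~$\fA$---is the right circle of ideas, but the route you sketch differs in an essential way from the paper's and leaves a genuine gap at the bootstrap step.

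The paper does \emph{not} go through an integrated-in-$s$ estimate followed by a bootstrap. It builds a linear drift directly into the observable, setting $\fA_{t,s}:=\tfrac{1}{s}(\fA - a - \vartheta t)$ with $\vartheta=\theta/2$, takes $\chi(\xi)=\tfrac12(1-\tanh\xi)$ (monotone, $\chi'=-\eta^2$), and differentiates $\|\chi(\fA_{t,s})^{1/2}g_J(\fH) e^{-\im t\fH}\psi\|_0^2$ \emph{in time $t$}, holding the scale $s$ fixed. The drift produces a term $+\tfrac{\vartheta}{s}\|\eta(\fA_{t,s})\,g_J(\fH) e^{-\im t\fH}\psi\|_0^2$, which the Mourre term $-\tfrac{\theta}{s}\|\eta(\fA_{t,s})\,g_J(\fH) e^{-\im t\fH}\psi\|_0^2$ dominates; after commutator expansion the remainder is $O(s^{-\tN})$, giving the clean pointwise inequality $\tfrac{d}{dt}\|\chi(\fA_{t,s})^{1/2}g_J(\fH) e^{-\im t\fH}\psi\|_0^2\le C_\tN\,s^{-\tN}\|g_J(\fH)\psi\|_0^2$, uniform in $a$ and in $s$ large. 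Integrating in $t$ yields a pointwise minimal-velocity estimate; the arbitrary polynomial rate $\la t\ra^{-k}$ is then extracted in a \emph{single step} by choosing $a=-\tfrac{\vartheta}{2}t$ and $s=\sqrt t$ and using that $\chi^{1/2}$ decays exponentially, so that $\la\lambda\ra^{-k}\chi^{1/2}((\lambda+\vartheta t/2)/\sqrt t)$ and $(1-\chi^{1/2})((\lambda-\vartheta t/2)/\sqrt t)\la\lambda\ra^{-k}$ are both $O(\la t\ra^{-k})$. The constraint $\tN\ge 4k+2$ is exactly what makes $t\cdot s^{-\tN}=t^{1-\tN/2}\le\la t\ra^{-2k}$ under that substitution; it is not tied to any iteration count on weights.

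Your plan instead keeps the observable $g_J(\fH)\chi(\fA/s)^2g_J(\fH)$ centered at the origin, integrates in the scale parameter $s$ to reach only a Kato-smoothness type bound $\int_1^\infty s^{-1}\|\Psi_s^{1/2}\varphi_s\|_0^2\,ds<\infty$, and then proposes to bootstrap through weights $\la\fA/s\ra^{2p}$, $p=1,\dots,k$, to a pointwise estimate. This is the classical Hunziker--Sigal--Soffer route and can in principle be made to work, but two points are missing. First, without the drift $\vartheta t$, the sign structure in your step (4) is not automatic: you must take $\chi$ monotone and supported where $\fA/s$ stays strictly below the Mourre constant, which you only allude to; a bump $\chi$ centered at $0$, as your notation suggests, has $\chi'$ of both signs and does not immediately give a signed Heisenberg derivative. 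Second, and more importantly, the passage from the integrated estimate to pointwise decay at rate $\la t\ra^{-k}$ is a nontrivial induction with its own numerology, and you have not verified that $\tN\ge 4k+2$ suffices for it; that threshold is calibrated to the paper's $s=\sqrt t$ trick, not to an iterated bootstrap. As written, your proposal is a plausible but incomplete alternative argument rather than a proof of the theorem with the stated hypotheses.
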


\begin{proof} 
Take $\chi(\xi) := \frac12(1- \tanh \xi)$. Put $\eta(\xi):= \frac{1}{\sqrt{2} \cosh \xi}$ and note that
\begin{equation}
 \label{chieta}
 \chi' = - \eta^2 , \qquad \abs{\eta^{(m)}(\xi)} \leq C_m \, \eta(\xi) , \quad \forall \xi \in \R, \ \ \forall m \in \N . 
 \end{equation} 
Next we set for $a \in \R$, $s \geq 1$ and $\vartheta := \frac{\theta}{2}$ (with $\theta$ of (M2) )
$$
\fA_{t,s} := \frac{1}{s}\big( \fA - a - \vartheta t\big) 
$$
and define via functional calculus the operators $\chi(\fA_{t,s})$ and $\eta(\fA_{t,s})$; both  are  bounded and  selfadjoint on $\cH$. To shorten the notation, from now on  we  write $\chi_{t,s} \equiv \chi(\fA_{t,s})$, $\eta_{t,s} \equiv \eta(\fA_{t,s})$,   $g_J \equiv g_J(\fH)$ and $\psi_t := e^{- \im t \fH} \psi$.
 Note that $e^{- \im \fH t} g_J(\fH)   \psi =
 g_J(\fH)     e^{- \im \fH t} \psi \equiv  g_J \psi_t$. \\
 The starting point of the proof is an energy estimate for the quantity $\norm{(\chi_{t,s})^{\frac12}  \, g_J \psi_t}_0$. We have
\begin{equation}
\label{min1}
\frac{d}{dt} \norm{(\chi_{t,s})^{\frac12}  \, g_J \psi_t}_0^2 
= 
\frac{d}{dt}\la \chi_{t,s} g_J \psi_t, \, g_J \psi_t \ra 
=
\frac{\vartheta}{s} \norm{\eta_{t,s} \, g_J \,  \psi_t}_0^2 + \la \im [\fH, \chi_{t,s}] 	\,  g_J \psi_t, \,  g_J \psi_t \ra . 
\end{equation}
To evaluate the right hand side we shall use the   commutator formulas  in Lemma \ref{lem:DG},  
 the identity
\begin{equation}
\label{ad.Ats}
{\rm ad}^j_{ \fA_{t,s}}(\fH) = \frac{1}{s^j} {\rm ad}^j_{\fA}(\fH) \ , \quad \forall s \geq 1 , \ \ \forall 1 \leq  j \leq \tN 
\end{equation}
and the fact that  all the operators ${\rm ad}^j_{\fA}(\fH)$ are bounded $\forall 1 \leq  j \leq \tN$ by (M1). 
The goal now is to estimate the second term in the right hand side of \eqref{min1}. 
For an arbitrary $f \in \cH$  we write 
\begin{equation}
\label{min2}
\la \im [\fH, \chi_{t,s}] f, f \ra  \stackrel{\eqref{r.exp}, \eqref{ad.Ats}}{=}  - \frac{1}{s} \la \eta_{t,s}^2 \im [\fH, \fA] f, f \ra
+
\sum_{j= 2}^{\tN-1}
\frac{1}{j!} \frac{1}{s^j}
 \la \chi_{t,s}^{(j)}\, \im \, {\rm ad}_\fA^j (\fH) f , f \ra  + \frac{1}{s^\tN} \la R_\tN f, f \ra
\end{equation}
where  $\chi_{t,s}^{(j)} := \chi^{(j)}(\fA_{t,s})$ and the remainder $R_\tN$ fulfills the estimate (see \eqref{R.exp})
\begin{equation}
\label{min3}
\norm{R_\tN }_{\cL(\cH)} \leq C_\tN \, \norm{ {\rm ad}_\fA^\tN (\fH)} \leq C_{\tN} . 
\end{equation}
Note that the constant $C_\tN>0$ in the previous estimate is uniform in $a \in \R$. In the following we shall simply denote by $R_\tN$ any bounded operator fulfilling an estimate like \eqref{min3}.\\
Consider now the first term in the expansion \eqref{min2} above. 
This time we use the left expansion 
\eqref{l.exp} and write
\begin{align}
\label{min30}
\frac{1}{s} \la \eta_{t,s}^2\,  \im [\fH, \fA] f, f \ra
& = 
\frac{1}{s}  \la \,  \im [\fH, \fA] \, \eta_{t,s} f,  \, \eta_{t,s} f \ra 
\\
& \quad +
\sum_{j=2}^{\tN-1}
\frac{(-1)^{j-1}}{(j-1)!}
\frac{1}{s^{j}} 
\langle  \, \im \, {\rm ad}_\fA^{j}(\fH) \, \eta_{t,s}^{(j-1)}f , 
\eta_{t,s} f
\rangle
+ 
\frac{1}{s^\tN}
\la  R_\tN f , f \ra 
\end{align}
where $R_\tN $ is estimated as in \eqref{min3}.
Consider now the second term in \eqref{min2}. 
From $\chi' = - \eta^2$, we have by functional calculus 
$
\chi^{(j)}(\fA_{t,s})  = \sum_{\ell = 1}^j c_{\ell j} \,  \eta^{(j-\ell)}(\fA_{t,s}) \, \eta^{(\ell)}(\fA_{t,s})  \, .
$
Thus we get that  
\begin{align}
\label{min31}
\frac{1}{s^j} \langle \chi_{t,s}^{(j)}\, \im \, {\rm ad}_\fA^j (\fH) f , f \rangle
&\stackrel{  \eqref{l.exp}}{=} 
\frac{1}{s^j} \sum_{\ell = 1}^j c_{\ell j} \langle  \, \im \, {\rm ad}_\fA^j (\fH) \, \eta_{t,s}^{(\ell)}f , \eta_{t,s}^{(j-\ell)}  f \rangle 
 \\
 & \quad + 
\sum_{\ell = 1}^j\sum_{n=1}^{\tN-j-1}
 \frac{c_{\ell jn}}{s^{j+n}}
  \langle  \,  
\im   \, {\rm ad}_\fA^{j+n} (\fH) \,  \eta_{t,s}^{(\ell+n)} f, \,  \eta_{t,s}^{(j-\ell)}  f \rangle  + \frac{1}{s^{\tN}} \la R_\tN f , f \ra 
\end{align}
By \eqref{min2}, \eqref{min30}, \eqref{min31} we have found that $\la \im [\fH, \chi_{t,s}] f, f \ra$ is a sum of terms of the form 
\begin{align*}
\la \im [\fH, \chi_{t,s}] f, f \ra = 
- \frac{1}{s} 
 \la \,  \im [\fH, \fA] \, \eta_{t,s} f,  \, \eta_{t,s} f \ra 
+
\sum_{j=2}^{\tN-1}
\frac{1}{s^{j}} 
\sum_{n, \ell,  m}
\langle  \, R_n \, \eta_{t,s}^{(\ell)}f , 
\eta_{t,s}^{(m)} f
\rangle
+ 
\frac{1}{s^\tN}
\la R_\tN f , f \ra 
\end{align*}
where $R_n, R_\tN$ are  bounded operators.
Furthermore, from the second of \eqref{chieta} and the spectral theorem, we bound 
\begin{equation}
\label{min4}
\abs{\langle  \, R_n \, \eta_{t,s}^{(\ell)}f , 
\eta_{t,s}^{(m)} f \rangle }
\leq
C \, \norm{\eta_{t,s} f}_0^2 \ . 
\end{equation}
We thus obtain,  for any $f \in \cH$ and $s \geq 1$, the estimate
\begin{equation}
\label{min5}
\la \im [\fH, \chi_{t,s}] f, f \ra  \leq - \frac{1}{s} \la \,  \im [\fH, \fA] \, \eta_{t,s} f,  \, \eta_{t,s} f \ra 
 + \frac{C_\tN}{s^2}  \norm{\eta_{t,s} f}_0^2 
 + \frac{C_\tN}{s^\tN}  \norm{f}_0^2 \ . 
\end{equation}
Now we evaluate such inequality at  $f = g_J \psi_t$, getting
\begin{equation}
\label{min50}
\la \im [\fH, \chi_{t,s}] g_J \psi_t, \, g_J \psi_t \ra  \leq - \frac{1}{s} \la \,  \im [\fH, \fA] \, \eta_{t,s} g_J \psi_t,  \, \eta_{t,s} g_J \psi_t \ra 
 + \frac{C_\tN}{s^2}  \norm{\eta_{t,s} g_J \psi_t}_0^2 
 + \frac{C_\tN}{s^\tN}  \norm{g_J \psi_t}_0^2 \ . 
\end{equation}
The next step is to prove that the first term in the right hand side above has a sign, up to higher order terms in $s^{-j}$. This is the point where the Mourre estimate (M2) comes into play.
 To see this, we analyze 
\begin{align}
\label{HAeta}
 \la \,  \im [\fH, \fA] \, \eta_{t,s} \, g_J \psi_t ,  \, \eta_{t,s}\,  g_J \psi_t \ra 
\equiv 
 \la \,  \im [\fH, \fA] \, \eta_{t,s} \, g_I \,  g_J \psi_t ,  \, \eta_{t,s} g_I \,  g_J \psi_t \ra  
\end{align}
where we used that $g_J g_I = g_J$.
Next we commute and expand in commutators  $\eta_{t,s} g_I$:
\begin{align}
\label{eta.gI}
\eta_{t,s} g_I = 
g_I \eta_{t,s} + [\eta_{t,s},  g_I ]  
\stackrel{ \eqref{l.exp} }{=} 
g_I \, \eta_{t,s} + \sum_{j = 1}^{\tN-2} \frac{c_j}{s^j}
\,  {\rm ad}_\fA^j (g_I(\fH)) \, \eta_{t,s}^{(j)}  + \frac{1}{s^{\tN-1}} \wt R_{\tN  -1} \ ; 
\end{align}
note that  Lemma \ref{ad.A.g(H)} assures that
the operators $ {\rm ad}_\fA^j (g_I(\fH)) $ are bounded $\forall j = 1, \ldots, \tN$, so is the operator $\wt R_{\tN-1}$ which fulfills 
\begin{equation}
\label{min6}
\norm{\wt R_{\tN-1}}_{\cL(\cH)} \leq C_\tN \, {\rm ad}_\fA^{\tN-1} (g_I(\fH)) < \infty  \ . 
\end{equation}
Again in the following we shall denote by $\wt R_{\tN - 1}$ any operator fulfilling an estimate like \eqref{min6}.
 Inserting the expansion \eqref{eta.gI} into \eqref{HAeta} 
 one gets, with $w := g_J \psi_t$,
 \begin{align*}
 \eqref{HAeta} 
 = 
\la \im [\fH , \fA] \, g_I \,\eta_{t,s}  w , \ g_I \, \eta_{t,s} w \ra + 
\sum_{j=1}^{\tN-2} \frac{c_j}{s^j}
\sum_{n,\ell,m} \langle R_n \, \eta^{(\ell)}_{t,s} w , \ \eta^{(m)}_{t,s} w \rangle + \frac{1}{s^{\tN-1}} \langle \wt R_{\tN-1} w , w \rangle 
 \end{align*}
 where each term of the form $\langle R_n \, \eta^{(\ell)}_{t,s} w , \ \eta^{(m)}_{t,s} w \rangle$ fulfills an estimate like \eqref{min4}. \\
It is finally time to use the strict Mourre estimate: by assumption (M2) we have for $s \geq 1$
\begin{align}
\la \im [\fH , \fA] \, g_I \,\eta_{t,s}  w , \ g_I \, \eta_{t,s} w \ra 
& \geq \theta \norm{g_I \, \eta_{t,s} w }_0^2   \ .
\end{align}
Using again the expansion \eqref{eta.gI} and  estimates 
\eqref{min4}, \eqref{min6} we get therefore
\begin{align}
\label{min52}
\la \im [\fH , \fA] \, g_I \,\eta_{t,s}  w , \ g_I \, \eta_{t,s} w \ra  & \geq \theta \norm{\, \eta_{t,s} g_I w }_0^2 - 
\frac{C_\tN}{s} \norm{\eta_{t,s} w}_0^2 - \frac{C_\tN}{s^{\tN-1}} \norm{w}_0^2 . 
\end{align}
This proves that the first term in the right hand side of  \eqref{min50} has a sign; we proceed from \eqref{min50} and using 
 inequality  \eqref{min52} (recall $w = g_J \psi_t)$
we  get 
\begin{equation}
\label{min53}
\la \im [\fH, \chi_{t,s}]  g_J \psi_t, \,  g_J \psi_t \ra  \leq - \frac{\theta}{s} \norm{\eta_{t,s} \, g_J \psi_t}_0^2 
 + \frac{C_\tN}{s^2}  \norm{\eta_{t,s} \, g_J \psi_t}_0^2 
 + \frac{C_\tN}{s^\tN}  \norm{\, g_J \psi_t}_0^2 \ . 
\end{equation}
We come back to the estimate \eqref{min1} of 
$\norm{(\chi_{t,s})^{\frac12}  \, g_J \psi_t}_0$.  We finally obtain,  with  $\vartheta = \frac{\theta}{2}$ and  $s \geq 1$ sufficiently large, 
\begin{align*}
\frac{d}{dt} \norm{(\chi_{t,s})^{\frac12}  \, g_J \psi_t}_0^2 
& \stackrel{\eqref{min53}}{\leq }
\frac{1}{s} \left( \vartheta - \theta \right) \norm{\eta_{t,s} \, g_J \psi_t}_0^2 
 + \frac{C_\tN}{s^2}  \norm{\eta_{t,s} \, g_J \psi_t}_0^2 
 + \frac{C_\tN}{s^\tN}  \norm{\, g_J \psi_t}_0^2
 \\
 & 
\leq  \frac{1}{s} \left(  - \frac{\theta}{2} + \frac{C_\tN}{s} \right) \norm{\eta_{t,s} \, g_J \psi_t}_0^2 
 + \frac{C_\tN}{s^\tN}  \norm{\, g_J \psi_t}_0^2  
\end{align*}
So, for $s \geq 1$ sufficiently large, the first term in the right hand side above is negative and, using also that $e^{- \im t \fH}$ is unitary and commutes with $g_J\equiv g_J(\fH)$, 
we get
\begin{equation*}
\frac{d}{dt} \norm{(\chi_{t,s})^{\frac12}  \, g_J \psi_t}_0^2  
\leq 
 \frac{C_\tN}{s^\tN}  \norm{\, g_J \psi_0}_0^2  \ . 
\end{equation*}
Integrating this inequality between $0$ and $t$   we find $\forall t>0$ 
$$
\norm{\chi^{\frac12}\Big( \frac{\fA - a - \vartheta t}{s}\Big) \, g_J \psi_t }_0^2  \leq 
\norm{\chi^{\frac12}\Big( \frac{\fA - a }{s}\Big) \, g_J \psi }_0^2 
+ \frac{C_\tN \, t }{s^\tN}  \norm{\, g_J \psi}_0^2 ,  
$$
uniformly for $a \in \R$ and $s \geq 1$ sufficiently large.
We evaluate this inequality at $a = - \frac{\vartheta}{2} t$ and $s = \sqrt{t}$,  obtaining for  $t \geq 1$ sufficiently large, the {\em minimal velocity estimate}
\begin{equation}
\label{min61}
\norm{\chi^{\frac12}\Big( \frac{\fA-\frac{\vartheta}{2}t}{\sqrt{t}}\Big) \, g_J \psi_t }_0  \leq 
\norm{\chi^{\frac12}\Big( \frac{\fA+\frac{\vartheta}{2}t}{\sqrt{t}}\Big) \, g_J \psi }_0
+ C_\tN \, t^{-\frac{\tN}{4} + \frac12}  \norm{\, g_J \psi}_0 \ . 
\end{equation}
To conclude,  take $k \in \N$ and consider $\norm{\la \fA \ra^{-k} \, g_J \,  \psi_t }_0 $. Clearly
\begin{align}
\label{min7}
\norm{\la \fA \ra^{-k} \, g_J \,  \psi_t }_0   
& \leq 
\norm{\chi^{\frac12}\Big( \frac{\fA-\frac{\vartheta}{2}t}{\sqrt{t}}\Big) \,\la \fA \ra^{-k} \, g_J \,  \psi_t}_0
\\
\label{min8}
&\quad 
+
\norm{\Big(1- \chi^{\frac12}\Big(\frac{\fA-\frac{\vartheta}{2}t}{\sqrt{t}}\Big) \,\Big) \,\la \fA \ra^{-k} \, g_J \,  \psi_t}_0
\end{align}
We estimate first \eqref{min8}. By Theorem \ref{thm:davies} (ii) we have
\begin{equation}
\label{min9}
\norm{\Big(1- \chi^{\frac12}\Big( \frac{\fA-\frac{\vartheta}{2}t}{\sqrt{t}}\Big) \,\Big)\,\la \fA \ra^{-k}}_{\cL(\cH)} \leq
\sup_{\lambda \in \R} 
\abs{\Big( 1-\chi^{\frac12}\Big(\frac{\lambda-\frac{\vartheta}{2}t}{\sqrt{t}}\Big)\Big) \,\la \lambda \ra^{-k}} \leq C_k \la t \ra^{-k} . 
\end{equation}
To prove the last inequality, use that  for $\lambda \geq \frac{\vartheta}{4}t$ then $\la \lambda \ra^{-k} \leq \la t \ra^{-k}$, whereas when $\lambda <\frac{\vartheta}{4}t$ then, being $\lambda \mapsto 1-\chi^{\frac12}(\lambda)$   monotone increasing and exponentially decaying at $-\infty$, 
$$
1-\chi^{\frac12}\Big(\frac{\lambda-\frac{\vartheta}{2}t}{\sqrt{t}}\Big) \leq 1-\chi^{\frac12}\Big(- \frac{\vartheta}{4}\sqrt{t}\Big) \leq C \left( e^{- \frac{\vartheta}{4}\sqrt{t}}\right)^{\frac12} \leq C_k \la t \ra^{-k}.
$$
Next we estimate \eqref{min7} using the minimal velocity estimate. 
As $\la \fA \ra^{-k}$ is a bounded operator,
\begin{align*}
\norm{\chi^{\frac12}\Big( \frac{\fA-\frac{\vartheta}{2}t}{\sqrt{t}}\Big) \,\la \fA \ra^{-k} \, g_J \,  \psi_t}_0
& 
\leq 
\norm{\chi^{\frac12}\Big( \frac{\fA-\frac{\vartheta}{2}t}{\sqrt{t}}\Big)  \, g_J \,  \psi_t}_0\\
& \stackrel{\eqref{min61}}{\leq  }
\norm{\chi^{\frac12}\Big( \frac{\fA+\frac{\vartheta}{2}t}{\sqrt{t}}\Big) \, g_J \psi }_0
+ C_\tN \, t^{-\frac{\tN}{4} + \frac12}  \norm{\, g_J \psi}_0
\\
& \leq
\norm{\chi^{\frac12}\Big( \frac{\fA+\frac{\vartheta}{2}t}{\sqrt{t}}\Big) \, \la \fA \ra^{-k}}_{\cL(\cH)} \, \norm{\la \fA \ra^{k} \, g_J \psi }_0
+ C_\tN \, t^{-\frac{\tN}{4} + \frac12}  \norm{\, g_J \psi}_0
\end{align*}
Again we have
\begin{equation}
\label{min10}
\norm{\chi^{\frac12}\Big( \frac{\fA+\frac{\vartheta}{2}t}{\sqrt{t}}\Big) \, \la \fA \ra^{-k}}_{\cL(\cH)}
\leq \sup_{\lambda \in \R}
\abs{\chi^{\frac12}\Big( \frac{\lambda+\frac{\vartheta}{2}t}{\sqrt{t}}\Big) \, \la \lambda \ra^{-k}} \leq C_k \la t \ra^{-k} , 
\end{equation}
since for  $\lambda \leq - \frac{\vartheta}{4} t$ one has   $\la \lambda \ra^{-k} \leq C \la t \ra^{-k}$, whereas 
 in case $\lambda > - \frac{\vartheta}{4} t$, as  $\lambda \mapsto \chi^{\frac12}(\lambda)$ is   monotone  decreasing exponentially fast at $+\infty$, one has 
$$
\chi^{\frac12}\Big(\frac{\lambda +\frac{\vartheta}{2}t}{\sqrt{t}}\Big) \leq \chi^{\frac12}\Big( \frac{\vartheta}{4}\sqrt{t}\Big) \leq C \left(e^{- \frac{\vartheta}{4}\sqrt{t}}\right)^{\frac12} \leq C_k \la t \ra^{-k}.
$$
Altogether, from \eqref{min7}, \eqref{min8} we have proved that for $t \geq 1$ sufficiently large,
\begin{align*}
\norm{\la \fA \ra^{-k} \, g_J \,  \psi_t }_0   
&\leq 
C_k \la t \ra^{-k} \norm{g_J \psi_t }_0  + 
 C_k \la t \ra^{-k} \norm{\la \fA \ra^k g_J \psi}_0 + 
 C_\tN \,  t^{-\frac{\tN}{4} + \frac12}  \norm{\, g_J \psi}_0
 \\
 & 
 \leq  C_k \la t \ra^{-k} \,  \norm{\la \fA \ra^k g_J \psi}_0 
\end{align*}
provided $\tN = 4k +2$. 
This proves the estimate \eqref{SS} for $t \geq 1$ sufficiently large, and it is also  clearly true for $t$ in any bounded interval.

\end{proof}

\bibliographystyle{plain} 

\end{document}